\newcommand{\bG}{{\mathbf{G}}}      
\newcommand{\cO}{{\mathcal{O}}}
\newcommand{\bB}{{\mathbf{B}}}
\newcommand{\bL}{{\mathbf{L}}}
\newcommand{\bM}{{\mathbf{M}}}
\newcommand{\bH}{{\mathbf{H}}}
\newcommand{\bT}{{\mathbf{T}}}
\newcommand{\bS}{{\mathbf{S}}}
\newcommand{\bV}{{\mathbf{V}}}
\newcommand{\bZ}{{\mathbf{Z}}}
\newcommand{\bb}{{\mathbf{b}}}
\newcommand{\bc}{{\mathbf{c}}}
\newcommand{\dade}{{\mathbf{D}_k}}
\def\GL{ \text{\rm GL} }
\def\GU{ \text{\rm GU} }
\def\SO{ \text{\rm SO} }
\def\Sp{ \text{\rm Sp} }
\def\SL{ \text{\rm SL} }
\def\PSL{ \text{\rm PSL} }
\def\PGL{ \text{\rm PGL} }
\def\SU{ \text{\rm SU} }
\def\SO{ \text{\rm SO} }
\def\Spin{ \text{\rm Spin} }
\DeclareMathOperator{\Res}{Res}               
\DeclareMathOperator{\Inf}{Inf}                    
\DeclareMathOperator{\Irr}{Irr}
\DeclareMathOperator{\Gal}{Gal}
\DeclareMathOperator{\sgn}{sgn}
\newtheorem{thm}{Theorem}[subsection]
\newtheorem{lem}[thm]{Lemma}
\newtheorem{cor}[thm]{Corollary}
\theoremstyle{theorem}
\theoremstyle{definition}
\newtheorem{exmp}[thm]{Example}
\newtheorem{defn}[thm]{Definition}
\theoremstyle{remark}
\newtheorem{rem}[thm]{Remark}
\begin{document}


\title[On the source algebra equivalence class of blocks with cyclic defect groups, II]
{On the source algebra equivalence class of blocks with cyclic defect groups, II}

\date{\today}

\author{
Gerhard Hiss and  Caroline Lassueur
}
\address{{\sc Gerhard Hiss},  Lehrstuhl f\"ur Algebra und Zahlentheorie, 
RWTH Aachen, 52056 Aachen, Germany.}
\email{gerhard.hiss@math.rwth-aachen.de}
\address{{\sc Caroline Lassueur}, Caroline Lassueur, 
RPTU Kaiserslautern--Land\-au, Fachbereich Mathematik,
67653 Kaiserslautern, Germany and Leibniz Universit\"at Hannover,
Institut f\"ur Algebra, Zahlentheorie und Diskrete Mathematik,
Welfengarten 1, 30167 Hannover, Germany.}
\email{lassueur@mathematik.uni-kl.de}

\keywords{Blocks with cyclic defect groups, source algebra equivalences, 
endo-permutation modules, quasisimple groups, finite groups of Lie type}

\subjclass[2010]{Primary 20C20, 20C15, 20C33.}
\begin{abstract} 
This series of papers is a contribution to the program of classifying
$p$-blocks of finite groups up to source algebra equivalence, starting
with the case of cyclic blocks.
To any $p$-block $\bB$ of a finite group with cyclic defect group $D$, 
Linckelmann associated an invariant $W( \bB )$, which is an indecomposable 
endo-permut\-ati\-on module over~$D$, and which, together with the Brauer tree
of~$\bB$, essentially determines its source algebra equivalence class.

In Parts II--IV of our series of papers, we classify, for odd~$p$, those 
endo-permutation modules of cyclic $p$-groups arising from $p$-blocks of 
quasisimple groups.

In the present Part II, we reduce the desired classification for the quasisimple 
classical groups of Lie type $B$, $C$, and $D$ to the corresponding
objective for the general linear and unitary groups; the classification is 
completed for the latter groups.
\end{abstract}

\thanks{The second author gratefully acknowledges financial support by the SFB TRR 195.}

\maketitle


\pagestyle{myheadings}
\markboth{On the source algebra equivalence class of blocks with cyclic defect groups, II}
{On the source algebra equivalence class of blocks with cyclic defect groups, II}

\vspace{6mm}
\section{Introduction}

Let~$k$ be an algebraically closed field of characteristic $p > 0$. By a 
\emph{$p$-block}, or simply a \emph{block}, we mean an indecomposable direct 
algebra factor of the group algebra~$kG$ for some finite group~$G$. A block is 
\emph{cyclic}, if it has a cyclic defect group.

Our work falls into the general program of classifying blocks up to certain 
categorical equivalences, which, in turn, is motivated by the following basic 
question: Can we determine which $k$-algebras occur as blocks? The approach to 
this question is guided by two concepts of equivalence, the general notion of 
\emph{Morita equivalence} for finite-dimensional $k$-algebras, and the stronger 
notion of \emph{source-algebra equivalence}, which applies to blocks only. In 
this respect, two prominent conjectures, one by Donovan and one by Puig, predict 
that the number of Morita equivalence classes, respectively source-algebra 
equivalence classes, of blocks is finite, provided a defect group is fixed. We 
notice, however, that classifying blocks up to Morita equivalence, respectively 
source-algebra equivalence is a much more involved task than proving the 
finiteness of the set of equivalence classes.

Donovan's Conjecture is known to hold for a fairly good number of small defect 
groups. An excellent summary of recent works and advances is given by 
the fast-growing database of blocks maintained by Charles Eaton \cite{EaWiki}. 
On the side of source-algebra equivalences much less is known: Puig's conjecture 
is known to hold when the defect groups are cyclic for an arbitrary prime 
number~$p$ by~\cite{Li96}, and for Klein-four defect groups when $p = 2$ 
by~\cite{CEKL}. In the latter case,~\cite{CEKL} also provides us with a 
classification of blocks up to source-algebra equivalence. In the cyclic case, 
though it may sound like everything is known, no such classification is 
available at present. In fact, we do not even have a classification of cyclic 
blocks up to Morita equivalence.

The Morita equivalence class of a cyclic block~$\mathbf{B}$ is encoded by its 
embedded Brauer tree. The source-algebra equivalence class of $\mathbf{B}$ is 
encoded by its embedded Brauer tree together with a sign function on the 
vertices of this tree and an invariant $W(\mathbf{B})$ associated to~$\bB$ by 
Linckelmann; see~\cite{Li96}. This is a certain indecomposable endo-permutation 
$kD$-module, 
where~$D$ is a defect group of~$\bB$. In~\cite{HL24}, which constitutes Part~I 
of our series of papers, we investigated the modules~$W( \bB )$ and started, for 
odd~$p$, the classification of those~$W( \bB )$ that arise from a cyclic 
block~$\bB$ of a quasisimple group~$G$. We proved that~$W( \bB )$ is trivial, in 
the sense that $W( \bB ) \cong k$, in a large number of cases. For an accurate 
account see \cite[Section~$6$]{HL24}. In particular,~$W( \bB )$ can only be 
non-trivial if~$G$ is a finite group of Lie type of characteristic different 
from~$p$, and if $G/Z(G)$ does not have an exceptional Schur multiplier;
moreover, if~$G$ is an exceptional group of Lie type, then $p = 3$, or~$G$ is of 
type~$E_8$ and $p = 3$ or~$5$.

In~\cite{HL24} we have announced one subsequent article containing the  
classification of the non-trivial invariants~$W( \bB )$. It has turned out that 
it is more appropriate to further divide this material into three portions, 
making up Parts II -- IV of our series of papers. This division follows three 
well-defined, methodologically disjoint, sections. 

Let us now describe the content of the present Part~II in more detail. In 
Section~$2$ we provide a large collection of preliminary results, which will 
also be used in Parts~III and~IV. Our classification program starts in 
Section~$3$. In order to describe the results, suppose henceforth, until 
otherwise said, that~$G$ is a quasisimple group of Lie type of characteristic
different from~$p$, and that~$\bB$ is a cyclic $p$-block of~$G$ with defect 
group~$D$. We also choose a suitable algebraic group~$\bG$, such that~$G$ is the 
set of fixed points of some Steinberg morphism of~$\bG$. By the results 
summarized two paragraphs above, every group we are left to consider is of this 
form.

In \cite[Section~$3$]{HL24} we showed how to determine $W(\bB)$ from the
character table of~$G$, in particular from the sign sequence of a 
non-exceptional character of~$\bB$ on the powers of a generator of~$D$. This 
result is one of our main tools. Namely, suppose that such a non-exceptional 
character is an irreducible Deligne--Lusztig character, arising from a linear
character of $p'$-order of a maximal torus~$T$ of~$G$ containing~$D$. Then the 
character formula for Deligne--Lusztig characters allows for a computation of 
this sign sequence; see Lemma~\ref{SignAndOmegaInvariant}. The relative ranks of 
the $\bG$-centralizers of the elements of~$D$ play a crucial role here. This 
approach is particularly fruitful if the block~$\bB$ is regular in the sense of 
Brou{\'e}; see \cite[Th{\'e}o\-r{\`e}\-me~$3.1$]{brouMo}.

Let~$D_1$ denote the unique subgroup of~$D$ of order~$p$. By its very 
definition, $W( \bB ) \cong W( \bc )$, where~$\bc$ is a Brauer correspondent 
of~$\bB$ in $C_G( D_1 )$. This observation is our second main tool.

Suppose that~$G$ is a quasisimple classical group different from $\SL_n( q )$
or $\SU_n(q)$. Our main reduction, presented in Section~$3$, shows that there is 
a block ${\bB}'$ of a general linear or unitary group~${G}'$, such that $\bB$ 
and~${\bB}'$ have isomorphic defect groups, and that 
$W( \bB ) \cong W( {\bB}' )$ after an identification of the defect groups; see 
Theorem~\ref{MainCorReduction}. So even though we are interested in 
quasisimple groups, we now have to investigate the general linear and unitary 
groups. This is achieved in Section~$4$.

It turns out a posteriori, that the general linear groups do not yield examples 
of blocks~$\bB$ with $W( \bB ) \not\cong k$. Thus let $G = \GU_n( q )$ for
some prime power~$q$ with $p \nmid q$ and some integer $n \geq 2$. The case 
$p \mid q + 1$ is essential, in the sense that the general case can be reduced 
to this; see Theorem~\ref{GLNQdneq1}. Suppose that $p \mid q + 1$. Then 
$p \mid |Z(G)|$, so that $G = C_G( D_1 )$. The $p$-blocks of~$G$ have been 
determined by Fong and Srinivasan in~\cite{fs2}. In our case, we may assume 
that~$\bB$ is a regular block with respect to a cyclic torus~$T$ of~$G$ of order 
$q^n - (-1)^n$ containing a defect group of~$\bB$. Then the non-exceptional 
character of~$\bB$ is an irreducible Deligne--Lusztig character arising from a 
linear character of~$T$ of $p'$-order and in general position. As indicated 
above, this allows us to determine~$W( \bB )$. We find that $W(\bB) \cong k$, 
unless~$n$ is odd and $p \equiv -1\,\,(\mbox{\rm mod}\,\,4)$; see 
Corollary~\ref{GLNQdeq1Source}. In this case, the module~$W(\bB)$ is described 
in terms of the parameters~$p$,~$q$ and~$n$.

As a result of our investigations, we present an explicit example where the 
Bonnaf{\'e}--Dat--Rouquier Morita equivalence \cite[Theorem~$1.1$]{BoDaRo} does 
not preserve the source algebras; see Example~\ref{BoDaRoExample}. This ends 
the description of the results of Part~II.

In Part~III we will determine the non-trivial invariants~$W( \bB )$ arising from 
cyclic blocks of quasisimple groups~$G$ with $G/Z(G) \cong \SL_n(q)$ 
or~$\SU_n(q)$. Finally, Part~IV achieves the desired classification for the
exceptional groups of Lie type.

\section{Preliminaries}

Before we return to the setup of \cite{HL24} in 
Subsection~\ref{SpecialConfigurations} below, we introduce some specific 
notation to simplify the formulation of our results.

\subsection{Some general notation} Here, we set up some notation that will be 
used in the formulation of our results.

\begin{defn}
\label{SignFunction}
{\rm
Let~$X$ be a set and $\chi: X \rightarrow \mathbb{R}$ a map.
Put $\sigma_{\chi} := \sgn \circ \chi$, where $\sgn$ is the sign 
function on the real numbers, that is, for $x \in X$,
$$\sigma_\chi( x ) := \begin{cases} +1, & \text{\ if\ } \chi( x ) > 0, \\
                                     0, & \text{\ if\ } \chi( x ) = 0, \\
                                    -1, & \text{\ if\ } \chi( x ) < 0.
                           \end{cases}$$
}\hfill $\Box$
\end{defn}

\begin{defn}
\label{PowersOfT}
{
Let~$H$ be a finite group,~$p$ a prime and $t \in H$ a $p$-element. Let~$X$ be a 
set and let $\rho: H \rightarrow X$ be a map. Define, for every positive 
integer~$m$, an element $\rho^{[m]}( t ) \in X^m$ by
$$\rho^{[m]}( t ) := (\rho( t^{p^{m-1}} ), \rho( t^{p^{m-2}} ), \ldots,
\rho( t^p ), \rho( t ) ).$$
Thus, $\rho^{[m]}( t )$ contains, in reverse order, the values of~$\rho$
at the elements $t, t^p, t^{p^2}, \ldots, t^{p^{m-1}}$.
}\hfill $\Box$
\end{defn}

In the notation of Definition~\ref{PowersOfT}, suppose that $|t| = p^l$. Then 
$|t^{p^{m-j}}| = p^{l - \min\{ l, m - j \}}$ for
$1 \leq j \leq m$. If $m \leq l$, then $|t^{p^{m-j}}| = p^j$ for 
$1 \leq j \leq m$; if $m > l$, then $|t^{p^{m-j}}| = 1$ for 
$1 \leq j \leq m - l$, and $|t^{p^{m-j}}| = p^{j - (m - l)}$ for 
$m -l < j \leq m$.

\subsection{Labels}
\label{Labels}
In the following, we let $\mathbb{F}_2 = \{ 0, 1 \}$ denote the field with~$2$
elements, let~$l$ be a positive integer, and put $\Lambda := \{ 0, 1, \ldots , l - 1 \}$. 
By $\mathcal{P}(\Lambda)$ we denote the power set of~$\Lambda$. The symmetric 
difference of sets equips~$\mathcal{P}(\Lambda)$ with the structure of an 
abelian group of exponent two, thus of an $\mathbb{F}_2$-vector space of 
dimension~$l$.

We write the elements of $\mathbb{F}_2^{\Lambda}$ as tuples 
$(\alpha_0, \alpha_1, \ldots, \alpha_{l-1} )$. For 
$A \subseteq \Lambda$ we let $\mathbf{1}_A \in \mathbb{F}_2^\Lambda$ denote the 
characteristic function of~$A$, i.e.\ $\mathbf{1}_A = 
(\alpha_0, \alpha_1, \ldots, \alpha_{l-1} )$ with $\alpha_j = 1$ if and only if 
$j \in A$. Then the map
\begin{equation}
\label{CharacteristicFunction}
\mathcal{P}(\Lambda) \rightarrow \mathbb{F}_2^{\Lambda},\quad\quad A \mapsto \mathbf{1}_A
\end{equation}
is an isomorphism of vector spaces.

The set $\{ -1, 1 \}^l \subseteq \mathbb{Z}^l$ is an $\mathbb{F}_2$-vector space
with component-wise multiplication. We define 
\begin{equation}
\label{DefineOmega}
\omega_{\Lambda}: \mathbb{F}_2^{\Lambda} \rightarrow \{ -1, 1 \}^l
\end{equation}
by 
$$\omega_{\Lambda}(\alpha_0, \ldots , \alpha_{l-1})_i = 
\begin{cases} 
+1, \text{\ if\ } \sum_{j = 0}^{i-1} \alpha_j = 0\\
-1, \text{\ if\ } \sum_{j = 0}^{i-1} \alpha_j = 1
\end{cases}\quad\quad\text{for\ } 1 \leq i \leq l.
$$
Then $\omega_{\Lambda}$ is an $\mathbb{F}_2$-vector space isomorphism.

A set $I \subseteq \Lambda$ is called an \textit{interval}, if~$I$ is the 
intersection of an interval of real numbers with~$\Lambda$. A non-empty 
interval~$I$ is written as $I = [a,b]$, if~$a$, respectively~$b$ is the smallest, 
respectively the largest element of~$I$. If $a, b \in \Lambda$ with $a > b$, 
then $[a,b]$ denotes the empty interval. The \textit{distance} of two
intervals is the Euclidean distance of subsets of the reals.

\begin{lem}
\label{OmegaOfInterval}
Let $I = [a,b] \subseteq \Lambda$ be an interval with $a \leq b$.
Then
$$
\omega_{\Lambda}( \mathbf{1}_I )_i = 
\begin{cases}
1, & 0 \leq i \leq a, \\
(-1)^{i - a}, & a + 1 \leq i \leq b, \\
(-1)^{b - a + 1}, & b + 1 \leq i \leq l,
\end{cases}
$$
and $\omega_{\Lambda}( \mathbf{1}_{\emptyset} )_i = 1$ for all $i \in \Lambda$.
\end{lem}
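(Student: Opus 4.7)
The plan is to unwind the definition of $\omega_\Lambda$ by a direct case analysis on the position of $i$ relative to the interval $I = [a,b]$. Writing $\mathbf{1}_I = (\alpha_0, \ldots, \alpha_{l-1})$, we have $\alpha_j = 1$ if and only if $a \leq j \leq b$, so the partial sum governing the value of $\omega_\Lambda(\mathbf{1}_I)_i$ reduces to a simple count:
$$S_i := \sum_{j=0}^{i-1} \alpha_j = |\{0, 1, \ldots, i-1\} \cap [a,b]|.$$
The lemma then amounts to determining the parity of $S_i$ in three regimes.

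First I would treat the three regimes in turn. If $i \leq a$, then $\{0, \ldots, i-1\}$ and $[a,b]$ are disjoint, so $S_i = 0$ and $\omega_\Lambda(\mathbf{1}_I)_i = +1$. If $a+1 \leq i \leq b+1$, then $\{0, \ldots, i-1\} \cap [a,b] = [a, i-1]$, which has $i - a$ elements, giving $\omega_\Lambda(\mathbf{1}_I)_i = (-1)^{i-a}$. Finally, if $i \geq b+2$, then $[a,b] \subseteq \{0, \ldots, i-1\}$, so $S_i = b - a + 1$ and $\omega_\Lambda(\mathbf{1}_I)_i = (-1)^{b-a+1}$.

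Next I would reconcile this trichotomy with the statement in the lemma. The middle and last regimes agree at the boundary $i = b+1$, since $(-1)^{(b+1)-a} = (-1)^{b-a+1}$, so one may equivalently restrict the middle regime to $a+1 \leq i \leq b$ and the last regime to $b+1 \leq i \leq l$; this is precisely the piecewise description in the statement. The empty interval case is immediate: $\mathbf{1}_\emptyset$ is the zero tuple, so every $S_i$ vanishes and $\omega_\Lambda(\mathbf{1}_\emptyset)_i = 1$ throughout.

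The proof is essentially bookkeeping; the only point requiring a moment of care is the overlap at $i = b+1$, where one must verify compatibility between the middle and last formulas to confirm that the piecewise description in the lemma is indeed a faithful repackaging of the case analysis.
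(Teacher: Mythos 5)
Your argument is correct and is exactly the unwinding that the paper's one-line proof (``This is clear from the definition.'') leaves implicit: translate the $\mathbb{F}_2$-sum $\sum_{j=0}^{i-1}\alpha_j$ into the parity of the count $|\{0,\ldots,i-1\}\cap[a,b]|$, split into the three regimes, and check the overlap at $i=b+1$. The only thing I'd flag is cosmetic: since $\omega_\Lambda(\cdot)_i$ is defined only for $1\le i\le l$, the lemma's first case ``$0\le i\le a$'' should be read as ``$1\le i\le a$'', but that does not affect your reasoning.
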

\begin{proof}
This is clear from the definition.
\end{proof}

\subsection{The Dade group and the invariant $W( \bB )$}
Let~$p$ be an odd prime and let~$k$ denote an algebraically closed field of
characteristic~$p$. Let~$D$ denote a cyclic group of order~$p^l > 1$. Recall 
that for $1 \leq j \leq l$, the unique subgroup of~$D$ of order~$p^j$ is denoted 
by~$D_j$. Let $\Lambda = \{ 0, 1, \ldots , l - 1 \}$ as in 
Subsection~\ref{Labels}. Recall from \cite[Subsection~$3.1$]{HL24} that the Dade 
group $\dade(D)$ of~$D$ is isomorphic to $\mathbb{F}_2^l$ and consists of the 
isomorphism classes of the indecomposable capped endo-permutation $kD$-modules 
$$W_D( \alpha_0, \ldots , \alpha_{l-1}) := 
\Omega_{D/D_0}^{\alpha_0}\circ\Omega_{D/D_1}^{\alpha_1}\circ\cdots
\circ\Omega_{D/D_{l-1}}^{\alpha_{l-1}}(k),$$ 
with $\alpha_j \in \mathbb{F}_2$ for $j \in \Lambda$, where $\Omega_{D/D_j}$
denotes the relative syzygy operator with respect to $D_j \leq D$. 
Moreover, the addition in $\dade(D)$ is given by
$$W_D( \alpha_0, \ldots , \alpha_{l-1}) + W_D( \alpha_0', \ldots , \alpha_{l-1}') 
= W_D( \alpha_0 + \alpha_0', \ldots , \alpha_{l-1} + \alpha_{l-1}'),$$
so that the map $\mathbb{F}_2^{\Lambda} \rightarrow \dade(D), 
(\alpha_0, \ldots , \alpha_{l-1}) \mapsto W_D( \alpha_0, \ldots , \alpha_{l-1})$ 
is an isomorphism.

For later purposes, it is convenient to introduce a slightly different notation 
for these modules.  We use the bijection~(\ref{CharacteristicFunction}) to 
replace the label $(\alpha_0, \ldots , \alpha_{l-1}) \in \mathbb{F}_2^{\Lambda}$ 
of an element of $\dade(D)$ by a subset of~$\Lambda$. Let us give a formal 
definition.

\begin{defn}
\label{SpecialWs}
{\rm
For $A \subseteq \Lambda$ we put $W_D(A) := W_D(\mathbf{1}_A)$.
In particular, $W_D( \emptyset ) \cong k$.
}\hfill $\Box$
\end{defn}

\begin{lem}
\label{NewNotation}
Let $A \subseteq \Lambda$ and put $W := W_D(A)$. Let~$t$ denote a
generator of~$D$. Then, in the notation of {\rm Definition~\ref{PowersOfT}},
$$\omega_W^{[l]}(t) = \omega_{\Lambda}( \mathbf{1}_A ),$$
with $\omega_W := \sigma_{\rho_W}$, where~$\rho_W$ is the ordinary character of
the lift of determinant~$1$ of~$W$; see \cite[Subsection~$3.1$]{HL24}.
In particular,~$W$ is uniquely determined, up to isomorphism, 
by~$\omega_W^{[l]}(t)$.
\end{lem}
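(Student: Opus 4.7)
The strategy is to interpret both sides of the asserted equality as group homomorphisms $\mathbb{F}_2^{\Lambda}\cong\dade(D)\to\{-1,1\}^l$, and then verify agreement on the $\mathbb{F}_2$-basis of $\dade(D)$ consisting of the classes $\Omega_{D/D_j}(k)$, $j\in\Lambda$. The right-hand side $A\mapsto\omega_{\Lambda}(\mathbf{1}_A)$ is already a homomorphism by Subsection~\ref{Labels}, so the first task is to show that $W\mapsto\omega_W^{[l]}(t)$ is multiplicative with respect to the Dade sum.

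For this multiplicativity, the plan is to exploit that addition in $\dade(D)$ corresponds to capping the tensor product over~$k$, so that $W\otimes_k W'\cong (W+W')\oplus P$ for some projective $kD$-module~$P$. Since~$D$ is cyclic of odd prime-power order, $t$ acts on any free $\mathcal{O}D$-module as a disjoint union of cycles of odd length $p^l$; hence every $\mathcal{O}D$-lift of~$P$ has determinant one and contributes~$0$ to its ordinary character at every non-identity element of~$D$. Combined with the fact that a tensor product of two lifts of determinant one has determinant one, this identifies the determinant-one lift of $W+W'$ with the non-projective summand of the tensor product of the determinant-one lifts of~$W$ and~$W'$, giving $\rho_{W+W'}(g)=\rho_W(g)\rho_{W'}(g)$ for every non-identity $g\in D$. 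Passing to signs, and once one knows these products do not vanish at the elements $t^{p^{l-i}}$ for $1\leq i\leq l$ (which is confirmed by the next step), one obtains the desired multiplicativity of $\omega_W^{[l]}(t)$ in~$W$.

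Next, I would evaluate $\omega_{\Omega_{D/D_j}(k)}^{[l]}(t)$ for each $j\in\Lambda$. The module $\Omega_{D/D_j}(k)$ is the augmentation kernel of the permutation module $k[D/D_j]$, and its natural lift, the augmentation kernel of $\mathcal{O}[D/D_j]$, has determinant one on~$t$, since~$t$ acts on $\mathcal{O}[D/D_j]$ as a single cycle of odd length $p^{l-j}$. A direct character computation then gives $\rho_{\Omega_{D/D_j}(k)}(t^{p^{l-i}})=p^{l-j}-1>0$ when $t^{p^{l-i}}\in D_j$, i.e.\ when $i\leq j$, and $-1<0$ otherwise. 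Comparison with Lemma~\ref{OmegaOfInterval} applied to the singleton interval $[j,j]$ then yields $\omega_{\Omega_{D/D_j}(k)}^{[l]}(t)=\omega_{\Lambda}(\mathbf{1}_{\{j\}})$.

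Assembling the two steps proves the formula: writing $W_D(A)=\sum_{j\in A}\Omega_{D/D_j}(k)$ in $\dade(D)$ and combining multiplicativity with the generator calculation and the homomorphism property of~$\omega_{\Lambda}$ produces $\omega_{W_D(A)}^{[l]}(t)=\omega_{\Lambda}(\mathbf{1}_A)$, and the uniqueness assertion is then immediate from the injectivity of~$\omega_{\Lambda}$. The main technical subtlety is the bookkeeping of the determinant-one normalisation under tensor products and removal of projective summands, but this is handled uniformly by the parity observation that for odd~$p$ all relevant cycle lengths of~$t$ are odd, so every determinant character that intervenes is trivial.
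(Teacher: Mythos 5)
Your computation of $\omega_{\Omega_{D/D_j}(k)}^{[l]}(t)$ on the generating classes, and the comparison with Lemma~\ref{OmegaOfInterval}, is correct, and comparing $\omega_W^{[l]}(t)$ and $\omega_\Lambda(\mathbf{1}_A)$ as homomorphisms $\dade(D)\to\{-1,1\}^l$ is a reasonable plan. The multiplicativity step, however, contains a genuine gap. You assert that for indecomposable capped endo-permutation $kD$-modules $W$, $W'$ one has $W\otimes_k W'\cong (W+W')\oplus P$ with $P$ \emph{projective}, and from this you deduce $\rho_{W+W'}(g)=\rho_W(g)\rho_{W'}(g)$ for all $1\neq g\in D$. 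Both claims are false: the complement to the cap consists of indecomposable summands with vertex strictly contained in~$D$, and such summands need not be projective when $|D|>p$.

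Concretely, take $D$ cyclic of order $p^2$ and $W=W'=\Omega_{D/D_1}(k)$. Here $D_1$ acts trivially on $W$ and $W'$, hence on all of $W\otimes_k W'$, so no summand of the tensor product is projective; yet $W+W'=W_D(0,0)\cong k$ has dimension~$1$, while $\dim(W\otimes_k W')=(p-1)^2$, so the non-cap part has dimension $(p-1)^2-1=p(p-2)>0$ (it is in fact $p-2$ copies of $k[D/D_1]$). On characters: $\rho_W(t^p)=\rho_{W'}(t^p)=p-1$, while $\rho_{W+W'}(t^p)=1\neq (p-1)^2$, so the value identity you invoke fails outright. The sign identity $\omega_{W+W'}(g)=\omega_W(g)\omega_{W'}(g)$ that you actually need does hold, but your projectivity argument does not establish it, and you would need a separate justification. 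One workable route is to exploit the composite description $W_D(\alpha)=\Omega^{\alpha_0}\bigl(W_D(0,\alpha_1,\ldots,\alpha_{l-1})\bigr)$: the ordinary syzygy $\Omega=\Omega_{D/D_0}$ \emph{does} have a projective (hence free) cover, so it negates the character at every non-identity element, while $W_D(0,\alpha_1,\ldots,\alpha_{l-1})$ is inflated from $D/D_1$ and can be handled by induction on~$l$. The paper itself simply defers the computation to \cite[Lemma~$3.2$]{HL24}, so its proof of the present lemma is a one-line translation between labelings and does not go through the tensor-product route you attempt.
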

\begin{proof}
The first statement follows from \cite[Lemma~$3.2$]{HL24} 
and~(\ref{DefineOmega}), the second from the 
fact that~$\omega_{\Lambda}$ is an isomorphism.
\end{proof}

Let~$G$ be a finite group and let~$\bB$ be a $p$-block of~$G$ with a non-trivial
cyclic defect group~$D$. We then write $W( \bB )$ for the indecomposable capped 
endo-permutation $kD$-module associated to~$\bB$ as in 
\cite[Subsection~$2.4$]{HL24}. The isomorphism class of~$W( \bB )$ is an element
of~$\dade(D)$. Recall that~$\bB$ is nilpotent, if and only if it has a unique 
irreducible Brauer character. This is the case if $D_1 \leq Z(G)$.

\begin{rem}
\label{SignSequenceRemark}
{\rm
With the notation introduced above, let $|D| = p^l$. Suppose that 
$D_1 \leq Z(G)$ so that $\Irr(\bB)$ contains a unique non-exceptional character, 
$\chi$, say. Put $W := W( \bB )$ and
let $t \in D$ be a generator. Then, \cite[Lemma~$3.3$]{HL24}, reformulated in 
the notation of Definition~\ref{PowersOfT}, implies that
$$\sigma_{\chi}^{[l]}( t ) = \omega_W^{[l]}( t ).$$
In particular,~$W$ is uniquely determined by $\sigma_{\chi}^{[l]}( t )$ up to
isomorphism, and if 
$\sigma_{\chi}^{[l]}( t ) = \omega_{\Lambda}( \mathbf{1}_A )$ for some 
$A \in \Lambda \setminus \{ 0 \}$, then $W = W_D( A )$.
}\hfill{$\Box$}
\end{rem}
In order to apply this remark, it is necessary to identify the non-exceptional
character of a cyclic block under the given hypothesis. This can be achieved by 
looking at character values. Let $\mathcal{G}$ denote the subgroup of
$\Gal( \mathbb{Q}( \sqrt[|G|]{1} )/ \mathbb{Q} )$ which fixes the roots of unity
of $p'$-order. Then $\mathcal{G}$ acts on the set of characters of~$G$. A
character of $G$ is $p$-rational, if and only if it is fixed by $\mathcal{G}$.
\begin{lem}
\label{IdentifyingTheNonExceptional}
Assume the notation and hypotheses of {\rm Remark~\ref{SignSequenceRemark}}. 
Then the non-exceptional character is the unique $p$-rational element of 
$\Irr( \bB )$.
\end{lem}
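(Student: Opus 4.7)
The plan is to exploit the hypothesis $D_1 \leq Z(G)$ (which, as recalled in the paragraph preceding Remark~\ref{SignSequenceRemark}, forces $\bB$ to be nilpotent) together with Puig's structure theorem for nilpotent blocks. Puig's theorem provides a source-algebra equivalence between $\bB$ and $\mathcal{O}D$, for a suitable complete DVR $\mathcal{O}$ of characteristic zero with residue field~$k$. Lifting to characteristic zero, this yields a bijection
$$\Irr(D) \longrightarrow \Irr(\bB), \qquad \lambda \longmapsto \chi_\lambda,$$
sending the trivial character $1_D$ to the unique non-exceptional character $\chi$ of $\bB$, and sending the remaining $p^l - 1$ non-trivial characters of $D$ bijectively to the exceptional characters of $\bB$. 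Crucially, since source-algebra equivalences are realised over Galois-stable integral structures, this bijection is equivariant for the natural action of $\mathcal{G}$ on both sides.

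It then suffices to show that $1_D$ is the unique $p$-rational element of $\Irr(D)$. But since $D$ is a cyclic $p$-group of order $p^l$, every non-trivial $\lambda \in \Irr(D)$ takes a primitive $p^j$-th root of unity (for some $1 \leq j \leq l$) as value on a generator of $D$; as the image of $\mathcal{G}$ in $\Gal(\mathbb{Q}(\zeta_{p^l})/\mathbb{Q}) \cong (\mathbb{Z}/p^l\mathbb{Z})^\times$ is the full group, some $\gamma \in \mathcal{G}$ moves this root of unity, and hence $\lambda$ is not $\mathcal{G}$-fixed. Transferring through the equivariant bijection, $\chi_\lambda$ is $p$-rational if and only if $\lambda = 1_D$, which identifies $\chi = \chi_{1_D}$ as the unique $p$-rational element of $\Irr(\bB)$.

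The main technical obstacle is justifying the $\mathcal{G}$-equivariance of Puig's bijection; although standard, this point merits explicit attention. A self-contained alternative, perhaps preferable here, is to invoke the Brou\'e--Puig character formula for nilpotent blocks, which writes $\chi_\lambda(g)$ as a product of the form $\pm\, \varphi(g_{p'}) \cdot \lambda(g_p^{\mathrm{conj}})$, where $\varphi$ is the unique irreducible Brauer character of $\bB$ and $g_p^{\mathrm{conj}}$ is a $G$-conjugate of the $p$-part of $g$ lying in $D$ (or zero if no such conjugate exists). From this formula both the $\mathcal{G}$-equivariance of the bijection and the conclusion of the lemma are immediate: $\chi_\lambda$ is $\mathcal{G}$-fixed exactly when $\lambda$ is, i.e.\ exactly when $\lambda = 1_D$.
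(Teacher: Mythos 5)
Your proposal is correct in outline but takes a genuinely different route from the paper. The paper stays entirely within Dade's classical theory of cyclic blocks: it cites Dornhoff, Theorem~68.1(8), for the facts that the non-exceptional character is $p$-rational and that the generalized decomposition matrix has a unique $p$-rational column, and then applies Feit, Lemma~IV.6.10, to deduce uniqueness. You instead exploit the nilpotency of~$\bB$ (which, as noted just before the Remark, follows from $D_1 \leq Z(G)$) to invoke Puig's structure theorem and/or the Brou\'e--Puig character formula, and thereby transfer the problem to the trivial observation that $1_D$ is the unique $p$-rational character of a non-trivial cyclic $p$-group. Your route yields a more conceptual explanation for the uniqueness and produces the full $\mathcal{G}$-orbit structure on $\Irr(\bB)$ for free; the paper's route is more elementary, uses only general cyclic-block machinery, and in particular does not need the nilpotent block hypothesis.

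There is, however, one unjustified step: you assert that the Puig bijection ``sends $1_D$ to the unique non-exceptional character $\chi$'', but neither Puig's theorem nor the bijection itself tells you which character of $D$ corresponds to the non-exceptional one. Closing this gap requires exactly the standard ingredient the paper cites from Dornhoff~68.1(8): either that the non-exceptional character is $p$-rational (then equivariance plus uniqueness of the $p$-rational element of $\Irr(D)$ forces $\chi = \chi_{1_D}$), or that it is characterized by having rational-integer generalized decomposition numbers (then the Brou\'e--Puig relation $d^u_{\chi_\lambda} = \pm\lambda(u)$ forces $\lambda = 1_D$). Without citing one of these, the argument begs the question of which $\chi_\lambda$ is non-exceptional. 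A minor technical point on top of that: in the Brou\'e--Puig formula the factor you write as $\varphi(g_{p'})$ should really be $\varphi_u(g_{p'})$, the value of the unique irreducible Brauer character of the Brauer correspondent of~$\bB$ in $C_G(u)$ where $u$ is the $G$-conjugate of $g_p$ lying in $D$; this does not affect the $\mathcal{G}$-equivariance argument, since all Brauer character values are sums of $p'$-roots of unity, but the formula as stated is not quite right.
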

\begin{proof}
The non-exceptional character in~$\Irr( \bB )$ is $p$-rational by
\cite[Theorem~68.1(8)]{DornhoffB}. The same reference implies that the matrix of 
generalized decomposition numbers of~$\bB$, as defined in 
\cite[Section IV.6, P.~$175$]{Feit}, has a unique $p$-rational column. Then 
\cite[Lemma IV.6.10]{Feit} shows that $\Irr(\bB)$ has a unique $p$-rational 
character. 
\end{proof}

\subsection{Preliminaries from representation theory}
\label{SpecialConfigurations}
We continue with the general notation of the first part~\cite{HL24} of this 
work. Thus~$G$ denotes a finite group,~$p$ is a prime and $(K,\cO,k)$ is a 
splitting $p$-modular system for~$G$, as introduced in 
\cite[Subsection~$2.1$]{HL24}. Except in Lemma~\ref{InflationGreen}
below, we assume that $p$ is odd throughout this subsection.

We specify our usage of the term Brauer correspondent. Suppose~$\bB$ is a 
$p$-block of~$G$ with an abelian defect group~$D$. If $D' \leq D$, any 
block~$\bb'$ of $C_G(D')$ such that $(D',\bb') \leq (D,\bb_0)$ for some 
maximal $\bB$-Brauer pair $(D,\bb_0)$, will be called \textit{a Brauer 
correspondent} of~$\bB$ in~$C_G(D')$. 

We continue with a result on the compatibility of Brauer correspondence and 
domination of blocks. For the concept of domination of blocks see 
\cite[Subsection~$5.8.2$]{NaTs}.

\begin{lem}
\label{InflationGreen}
Let $Y \unlhd Z(G)$ and set $\bar{G} := G/Y$. 
Let~$\bar{\bB}$ be a block of $k\bar{G}$ and let~$\bB$ be the unique 
block of~$kG$  dominating $\bar{\bB}$. Let~$D$ be a defect group of~$\bB$.
Then $\bar{D} := DY/Y$ is a defect group of~$\bar{\bB}$.

Let $H \leq G$ with $N_G( D ) \leq H$, and put $\bar{H} := H/Y$. Then
$N_{\bar{G}}(\bar{D}) \leq \bar{H}$.
Let~$\bb$ be the Brauer correspondent of~$\bB$ in~$H$ and let~$\bar{\bb}$ 
be the Brauer correspondent of~$\bar{\bB}$ in~$\bar{H}$. 
If~$D$ is abelian, then~$\bb$ is the unique block of~$kH$ 
dominating~$\bar{\bb}$.
\end{lem}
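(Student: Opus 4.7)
The plan is to verify the three assertions in turn; the principal difficulty lies in the final block-correspondence statement. For the defect group claim, write $Y = Y_p \times Y_{p'}$, where $Y_p$ and $Y_{p'}$ are the Sylow $p$-subgroup and $p$-complement of~$Y$, respectively. Since every defect group of any block of $kG$ contains $O_p(G)$, and $Y_p \leq O_p(Z(G)) \leq O_p(G)$, we have $Y_p \leq D$, so $D \cap Y = Y_p$ and $|\bar D| = |DY/Y| = |D|/|Y_p|$ is a $p$-power. That $\bar D$ is indeed a defect group of $\bar\bB$ is the standard conclusion of the theory of block domination under a central normal subgroup, cf.\ \cite[Section~$5.8.2$]{NaTs}.

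For the normalizer inclusion, let $g \in G$ with $gY \in N_{\bar G}(\bar D)$. Then $gDg^{-1}Y = DY$, hence $gDg^{-1} \subseteq DY$. Since $Y/(D\cap Y) = Y/Y_p$ has order coprime to $p$, both $D$ and $gDg^{-1}$ are Sylow $p$-subgroups of $DY$. Sylow's theorem yields $u \in DY$ with $u^{-1}g \in N_G(D) \leq H$. As $Y \leq Z(G) \leq N_G(D) \leq H$ and $D \leq H$, we have $DY \leq H$; hence $u \in H$, so $g \in H$ and $gY \in \bar H$.

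Now assume $D$ is abelian, so that $\bb$ has defect group $D$. Since $Y \leq Z(G) \leq Z(H)$, the first assertion applies verbatim to $\bb$ inside $kH$: any block of $k\bar H$ dominated by $\bb$ has defect group $\bar D$. Moreover, since $\bb^G = \bB$ dominates $\bar\bB$, standard domination theory ensures that $\bb$ itself dominates a unique block $\bb^*$ of $k\bar H$, necessarily of defect group $\bar D$. It remains to identify $\bb^* = \bar\bb$: by Brauer's First Main Theorem applied in $\bar G$ (using the normalizer inclusion just established), this reduces to verifying $(\bb^*)^{\bar G} = \bar\bB$, that is, that Brauer correspondence commutes with block domination. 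This will follow from the compatibility of the Brauer homomorphism $\text{Br}_D \colon (kG)^D \to kC_G(D)$ with the algebra surjection $kG \to k\bar G$: since $Y \leq Z(G) \leq C_G(D)$, one verifies that $\text{Br}_D$ descends modulo~$Y$ to $\text{Br}_{\bar D}$, and then tracking the block idempotents $e_\bB$ and $e_\bb$ through the resulting commutative diagram yields $\bb^* = \bar\bb$. The uniqueness of $\bb$ dominating $\bar\bb$ follows at once: by the standard domination theory with $Y \leq Z(H)$, a block of $kH$ dominating a prescribed block of $k\bar H$ is unique when it exists.

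The principal obstacle is the commutativity of Brauer correspondence with block domination invoked at the end. This is an expected consequence of the functoriality of the Brauer homomorphism under central quotients, but the argument requires careful bookkeeping of block idempotents and of the action of $\text{Br}_D$ before and after reduction modulo~$Y$.
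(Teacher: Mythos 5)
Your arguments for the first two assertions are sound. For the defect-group claim you reduce to the standard domination theory exactly as the paper does (via the observation that $Y_p \leq O_p(G) \leq D$ and the citation of \cite[Section~$5.8.2$]{NaTs}). Your Sylow-theoretic argument for $N_{\bar G}(\bar D) \leq \bar H$ is correct and is actually more explicit than the paper, which takes this containment for granted. But the heart of the lemma is the final assertion, and there you have a genuine gap which you yourself flag.

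The strategy you propose---showing that the Brauer homomorphism $\mathrm{Br}_D$ descends through the surjection $kG \to k\bar G$ to $\mathrm{Br}_{\bar D}$, and then chasing block idempotents---is only a sketch, and it is not clear that it can be carried out as described. Two concrete difficulties: (i) the natural target of $\mathrm{Br}_{\bar D}$ is $kC_{\bar G}(\bar D)$, but $\overline{C_G(D)}$ may be a \emph{proper} subgroup of $C_{\bar G}(\bar D)$ (an element $g$ can satisfy $[g,D] \subseteq Y$ without centralizing $D$), so the claimed commutative square involving $\mathrm{Br}_D$ and $\mathrm{Br}_{\bar D}$ does not obviously commute; one would have to argue that coefficients supported on $C_{\bar G}(\bar D)\setminus \overline{C_G(D)}$ drop out. (ii) Your sketch never uses the hypothesis that $D$ is abelian, yet the lemma asserts it. A proof sketch that does not locate where a stated hypothesis enters is usually incomplete or incorrect.

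The paper avoids the Brauer-homomorphism bookkeeping entirely by switching to a module-theoretic argument, and this is precisely where the abelian hypothesis is load-bearing. Since $\bB$ dominates $\bar\bB$, there is a simple $\bar\bB$-module $\bar V$ whose inflation $V$ lies in $\bB$. Because $D$ is abelian, Kn\"orr's theorem forces $D$ to be a vertex of $V$ (and $\bar D$ a vertex of $\bar V$), so the Green correspondents $f(V)$ in $H$ and $f(\bar V)$ in $\bar H$ lie in $\bb$ and $\bar\bb$ respectively, because Green correspondence commutes with Brauer correspondence. The identification $f(V) \cong \Inf_{\bar H}^{H}(f(\bar V))$ then follows from Harris's result \cite[Proposition~$5$]{Harris08} (using $Y \leq N_G(D) \leq H$), and this exhibits a $\bb$-module inflated from a $\bar\bb$-module, which forces $\bb$ to be the block dominating $\bar\bb$. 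If you want to rescue your approach, you would need to either establish the commutativity of Brauer correspondence with domination as a standalone result (with a proof, not an appeal to expectation) or adopt the Green-correspondence route, where the abelian hypothesis enters through Kn\"orr's theorem.
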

\begin{proof}
Since $G/Y \cong (G/O_{p'}(Y)/(Y/O_{p'}(Y))$, the group $\bar{D}$ is a defect 
group of~$\bar{\bB}$ by \cite[Theorems~$5.8.8$ and~$5.8.10$]{NaTs}.

There is a unique block~$\bb_{0}$ of~$kH$ 
dominating~$\bar{\bb}$. To prove that $\bb_{0} = \bb$, it suffices to show
that~$\bb$ contains a module which is the inflation from 
$\bar{H}$ to $H$ of an indecomposable 
$\bar{\bb}$-module.
\par
As~$\bB$ dominates~$\bar{\bB}$, there is a simple~$\bar{\bB}$-module~$\bar{V}$, 
whose inflation $V := \Inf_{\bar{G}}^{G}(\bar{V})$ to~$G$ belongs to~$\bB$.
The defect group~$D$ of~$\bB$ being abelian, it must be a vertex of~$V$ by 
Kn\"{o}rr's theorem (see~\cite[Corollary~$3.7$(ii)]{Knoerr79}), and 
similarly~$\bar{D}$ is a vertex of~$\bar{V}$. Write~$f(V)$, 
respectively~$f(\bar{V})$, for the Green correspondent of~$V$ in~$H$, 
respectively of~$\bar{V}$ in~$\bar{H}$. As the Green correspondence 
commutes with the Brauer correspondence,~$f(V)$ belongs to~$\bb$ 
and~$f(\bar{V})$ belongs to~$\bar{\bb}$. Finally, 
\cite[Proposition~$5$]{Harris08} yields
\[
f(V) \cong \Inf_{\bar{H}}^{H}(f(\bar{V})),
\]
as $Y \leq N_G( D ) \leq H$.
This proves our claim.
\end{proof}

In the following lemma we investigate the behavior of~$W( \bB )$ with respect to 
domination of blocks. Recall that if~$D$ is a non-trivial 
cyclic~$p$-group,~$D_1$ denotes its unique subgroup of order~$p$.

\begin{lem}
\label{lem:OpZG}
Let $Y \leq Z(G)$ with $O_p(Y) \neq \{ 1 \}$. Set $\bar{G} := G/Y$ and let~$a$ be
the positive integer such that $|O_{p}(Y)| = p^{a}$. 

Let~$\bar{\bB}$ be a block of~$\bar{G}$ and let~$\bB$ be the unique block of~$G$ 
dominating~$\bar{\bB}$. Suppose that a defect group~$D$ of~$\bB$ is cyclic of
order~$p^l$ with $l \geq 1$. 

Then the following assertions hold.

{\rm (a)} We have $G = N_{G}(D_{1}) = C_{G}(D_{1})$, the blocks~$\bB$
and~$\bar{\bB}$ are nilpotent, and $\bar{D} := D/O_{p}(Y)$  is a defect
group of $\bar{\bB}$.

{\rm(b)} We have $W(\bB) \cong \Inf_{\bar{G}}^{G}(W(\bar{\bB}))$ or
$W(\bB) \cong \Inf_{\bar{G}}^{G}(\Omega(W(\bar{\bB})))$.

{\rm (c)} Suppose that $W(\bB) = W_D(\alpha_0,\ldots,\alpha_{l-1}) = 
W_{D}( {\mathbf{1}}_{A})$ for some $A \subseteq \{ 0, \ldots , l - 1 \}$. 
Then
$\alpha_{0} = \cdots = \alpha_{a-1} = 0$, i.e.\ $A \subseteq \{a, \ldots, l-1\}$.
 
Setting $\bar{A} = \{ j - a \mid j \in A \setminus \{ a \} \} \subseteq 
\{ 1, \ldots , l - a - 1 \}$, we have 
$W( \bar{\bB} ) = W_{\bar{D}}( \bar{A} )$.
\end{lem}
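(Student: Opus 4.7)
The plan is to combine a structural argument for~(a) with a character-theoretic sign-sequence calculation for~(b) and~(c), using the unique non-exceptional character of each block together with its characterization via $p$-rationality.

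For~(a), I first observe that $O_p(Y)$, being a non-trivial normal $p$-subgroup of~$G$ contained in~$Z(G)$, lies in $O_p(G)$ and hence in the cyclic defect group~$D$. Since~$D$ is cyclic with unique subgroup~$D_1$ of order~$p$, this forces $D_1 \leq O_p(Y) \leq Z(G)$, whence $G = C_G(D_1) = N_G(D_1)$. The inertial quotient of~$\bB$, a $p'$-subgroup of $\Aut(D)$ acting trivially on~$D_1$, must then be trivial since the kernel of $\Aut(D) \to \Aut(D_1)$ is a $p$-group, so $\bB$ is nilpotent. Lemma~\ref{InflationGreen} applied with $H = G$ yields $\bar{D} = DY/Y$, and as $O_p(Y) \leq D$ while $Y/O_p(Y)$ is a $p'$-group meeting~$D$ trivially, one identifies $\bar{D} \cong D/O_p(Y)$; the nilpotency of~$\bar{\bB}$ then follows from the same argument applied to~$\bar{G}$ (or is automatic if $\bar{D}$ is trivial).

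For~(b) and~(c) the essential tool is Remark~\ref{SignSequenceRemark}, which recovers $W(\bB)$ from the sign sequence $\sigma_\chi^{[l]}(t)$ of the unique non-exceptional character~$\chi$ of~$\bB$ on the powers of a generator~$t$ of~$D$, and analogously $W(\bar{\bB})$ from $\sigma_{\bar{\chi}}^{[l-a]}(\bar{t})$, where $\bar{t} = tY$ generates~$\bar{D}$. By Lemma~\ref{IdentifyingTheNonExceptional}, whose hypotheses are met since both blocks are nilpotent with cyclic defect, $\chi$ is the unique $p$-rational element of $\Irr(\bB)$ and $\bar{\chi}$ the unique $p$-rational element of $\Irr(\bar{\bB})$. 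Since $\Inf_{\bar{G}}^G(\bar{\chi})$ lies in~$\bB$ by domination and is $p$-rational, uniqueness forces the key identity $\chi = \Inf_{\bar{G}}^G(\bar{\chi})$.

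From here I compute $\sigma_\chi^{[l]}(t)$ directly: for $1 \leq j \leq a$ the element $t^{p^{l-j}}$ has order $p^j \leq p^a$ and hence lies in $D_a = O_p(Y) \leq \Ker(\chi)$, so $\sigma_\chi(t^{p^{l-j}}) = +1$; for $a < j \leq l$ one has $\sigma_\chi(t^{p^{l-j}}) = \sigma_{\bar{\chi}}(\bar{t}^{p^{l-j}}) = \sigma_{\bar{\chi}}^{[l-a]}(\bar{t})_{j-a}$. Translating via Lemma~\ref{NewNotation}, the first~$a$ entries of $\omega_{W(\bB)}^{[l]}(t)$ being $+1$ force $\alpha_0 = \cdots = \alpha_{a-1} = 0$, giving the first claim of~(c); matching the remaining $l-a$ entries with $\omega_{W(\bar{\bB})}^{[l-a]}(\bar{t})$ via the isomorphism~$\omega_\Lambda$ then yields the prescribed relation between $A$ and $\bar{A}$, from which~(b) follows as a consequence, the $\Omega$-dichotomy corresponding to whether or not $a \in A$.

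The principal obstacle will be the identification $\chi = \Inf_{\bar{G}}^G(\bar{\chi})$: this depends on the uniqueness of the $p$-rational character in each block, which in turn requires that Lemma~\ref{IdentifyingTheNonExceptional} apply to $\bar{\bB}$ as well as to~$\bB$, justified via the nilpotency established in~(a). A secondary technical point is the careful passage between the sign sequence and the Dade-group labeling on both sides, keeping track of the index shift by~$a$ so as to match the formulation of~(c).
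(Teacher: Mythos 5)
Your treatment of part (a) is mostly sound, but the phrase ``the nilpotency of $\bar{\bB}$ then follows from the same argument applied to $\bar{G}$'' does not work: that argument requires $\bar{D}_1 \leq Z(\bar{G})$, which is false in general (e.g.\ when $Y = Z(G)$ and $\bar{G}$ has trivial center, as happens for $\PGU_n(q)$). The correct deduction is that nilpotency passes from $\bB$ to the dominated block $\bar{\bB}$ directly, since the irreducible Brauer characters of $\bar{\bB}$ inflate bijectively to those of $\bB$ and nilpotency of a cyclic block is equivalent to having a unique irreducible Brauer character. This is how the paper argues.

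For parts (b) and (c) you take a genuinely different route: a character-theoretic computation via sign sequences, rather than the paper's module-theoretic argument (which identifies $W(\bB)$ as the inflation $\Inf_{\bar{D}}^D(S)$ of a source $S$ of the unique simple $\bar{\bB}$-module, using Harris's compatibility of Green correspondence with inflation). Your key identity $\chi = \Inf_{\bar{G}}^G(\bar{\chi})$ is correctly established, and the first half of your sign computation (forcing $\alpha_0 = \cdots = \alpha_{a-1} = 0$ because $O_p(Y) \leq \Ker(\chi)$) is fine. However, there is a genuine gap in the second half: to pass from $\sigma_{\bar{\chi}}^{[l-a]}(\bar{t})$ to $W(\bar{\bB})$ you invoke Remark~\ref{SignSequenceRemark} for $\bar{\bB}$, but that remark has the hypothesis $\bar{D}_1 \leq Z(\bar{G})$, which, as above, is not available. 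This hypothesis is not cosmetic: when $\bar{D}_1 \not\leq Z(\bar{G})$, the source of the unique simple $\bar{\bB}$-module is either $W(\bar{\bB})$ or $\Omega(W(\bar{\bB}))$ — this ambiguity is exactly what produces the two cases in the statement of part~(b), and the sign-sequence identity $\sigma_{\bar{\chi}}^{[l-a]}(\bar{t}) = \omega_{W(\bar{\bB})}^{[l-a]}(\bar{t})$ cannot be used unqualified. To salvage your argument you would need to pass to a Brauer correspondent $\bar{\bc}$ of $\bar{\bB}$ in $C_{\bar{G}}(\bar{D}_1)$, where the central hypothesis does hold and $W(\bar{\bc}) \cong W(\bar{\bB})$, and then prove that the non-exceptional character of $\bar{\bc}$ has the same sign sequence on $\langle \bar{t} \rangle$ as $\bar{\chi}$; this comparison of generalized decomposition numbers across the Brauer correspondence is a nontrivial step you have not supplied. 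The paper's module-theoretic route avoids this entirely.
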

\begin{proof}
(a) As~$D$ is a defect group and~$O_{p}(Y)$ is a normal $p$-subgroup
of~$Z(G)$, certainly~$O_{p}(Y)$ is contained in~$O_{p}(G) \leq D$. Thus
$D_{1} \leq O_{p}(Y) \leq D$ and so~$G$ centralizes~$D_{1}$. Thus~$\bB$ is
nilpotent and so must be~$\bar{\bB}$, proving the first assertion of~(a).
The second one follows from Lemma~\ref{InflationGreen}.

(b) By (a), up to isomorphism, there is a unique simple $\bB$-module, say~$V$,
on which~$D_{1}$ acts trivially. Thus, by definition,~$W(\bB)$ is a $kD$-source
of~$V$. Similarly, there is, up to isomorphism, a unique simple
$\bar{\bB}$-module, say~$\bar{V}$, and $\Inf_{\bar{G}}^G(\bar{V}) = V$ 
since~$\bB$ dominates~$\bar{\bB}$. In the quotient, we have two possibilities, 
namely either~$W(\bar{\bB})$ is a source of~$\bar{V}$ or~$\Omega(W(\bar{\bB}))$ 
is a source of~$\bar{V}$. This follows from the facts summarized in 
\cite[Subsections~$3.5$ and~$4.2$]{HL20}. However, if~$S$ denotes a 
$k\bar{D}$-source of~$\bar{V}$, then by \cite[Proposition~2]{Harris08} we have
$$W(\bB) \cong \Inf_{\bar{D}}^{D}(S).$$
Assertion (b) follows. 

(c) Suppose that 
$$W_{\bar{D}}( \bar{\alpha}_0, \ldots , \bar{\alpha}_{l-a-1} ) = 
\Omega_{\bar{D}/\bar{D}_0}^{\bar{\alpha}_0} \circ
\Omega_{\bar{D}/\bar{D}_1}^{\bar{\alpha_1}} \circ \cdots \circ
\Omega_{\bar{D}/\bar{D}_{l-a-1}}^{\bar{\alpha}_{l - a - 1}}(k)$$
represents the class of~$S$ in the Dade group of~$\bar{D}$. Then 
$W( \bB ) = \Inf_{\bar{D}}^{D}(S) = \Inf_{{D/D_a}}^{D}(S)$ is represented by
\[
W_D(\alpha_0,\ldots,\alpha_{l-1})
=\Omega_{D/D_0}^{\alpha_0}\circ\Omega_{D/D_1}^{\alpha_1}\circ\cdots
\circ\Omega_{D/D_{l-1}}^{\alpha_{l-1}}(k)
\]
with $\alpha_{0} = \cdots = \alpha_{a-1} = 0$ and $\alpha_{j + a} =
\bar{\alpha}_j$ for $0 \leq j \leq l - a - 1$.
Notice that $S = W( \bar{\bB} )$, if and only if $\bar{\alpha}_0 = 0$.
Otherwise, $\bar{\alpha}_0 = 1$ and $W( \bar{\bB} ) = \Omega( S )$.
Now $\Omega( S )$ is represented by
$W_{\bar{D}}( \bar{\alpha}_0', \ldots , \bar{\alpha}_{l-a-1}' )$ with
$\bar{\alpha}_0' = \bar{\alpha}_0 + 1$, and 
$\bar{\alpha}_j' = \bar{\alpha}_j$ for all $1 \leq j \leq l - 1 - 1$.
This proves our assertions.
\end{proof}

\subsection{Preliminaries on finite reductive groups}

Let~$\mathbb{F}$ denote an algebraic closure of a finite field of 
characteristic~$r$. The prime~$p$ introduced in 
Subsection~\ref{SpecialConfigurations} is assumed to be distinct from~$r$.
Let $\mathbf{G}$ be a connected reductive linear algebraic group 
over~$\mathbb{F}$ and 
let~$F$ denote a Steinberg morphism of~$\mathbf{G}$. Recall that if~$\mathbf{H}$ 
is a closed, $F$-stable subgroup of~$\mathbf{G}$, we write $H := \mathbf{H}^F$ 
for the set of $F$-fixed points of~$\mathbf{H}$, and $\mathbf{H}^\circ$ for the 
connected component of~$\mathbf{H}$. In particular, the group~$G$ introduced in
Subsection~\ref{SpecialConfigurations} now is of the form $G = \bG^F$. To avoid 
cumbersome double superscripts, we write 
$Z^{\circ}( \bG ) := Z( \bG )^{\circ}$ and $Z^{\circ}( G ) := 
Z^{\circ}( \bG )^F$. Also, if $s \in G$, we write $C^{\circ}_{\bG}( s ) :=
C_{\bG}( s )^{\circ}$ and $C^{\circ}_{G}( s ) := C^{\circ}_{\bG}( s )^F$.
Let~$\bG^*$ be a reductive group dual to~$\bG$, equipped with a dual Steinberg
morphism, also denoted by~$F$. If~$\bT$ and~$\bT^*$ are $F$-stable maximal tori 
of~$\bG$ and~$\bG^*$, respectively, and if $s \in T^*$ and $\theta \in \Irr(T)$
are such that the $G^*$-conjugacy class of $(\bT^*,s)$ and the $G$-conjugacy 
class of~$(\bT,\theta)$ correspond under the bijection exhibited in 
\cite[Proposition~$11.1.6$]{DiMi2}, we say that $(\bT^*, s)$ and 
$(\bT,\theta)$ \textit{are in duality}. When we talk about duality, we always 
tacitly assume that the necessary choices have been made. If $s \in G^*$ is 
semisimple, we let $\mathcal{E}( G, s )$ denote the corresponding Lusztig series 
of characters; see \cite[Definition~$2.6.1$]{GeMa}. 
The elements of $\mathcal{E}(G,1)$ are the unipotent characters of~$G$.
By a regular subgroup of~$\bG$ we mean an $F$-stable Levi subgroup of~$\bG$.

\subsubsection{Component groups}
\label{ComponentGroups}
Let $s \in \bG$ be a semisimple element. The finite group 
$$A_{\bG}( s ) := C_{\bG}(s)/C^{\circ}_{\bG}( s )$$
is called the \textit{component group} of~$s$. If~$s \in G$, then $A_{\bG}( s )$ 
is $F$-stable, and we write $A_{\bG}( s )^F$ for the set of its $F$-fixed 
points.  We have $A_{\bG}( s )^F = C_G( s )/C^{\circ}_G( s )$ by the 
Lang--Steinberg theorem. 

Let $s \in \bG^*$ be semisimple. There is an $F$-equivariant embedding 
of~ $A_{\bG^*}( s )$ into $Z( \bG )/Z^{\circ}( \bG )$; see \cite[$(8.4)$ and 
Lemme~$4.10$]{CeBo2}. In particular, $A_{\bG^*}( s )$ is abelian, and 
$A_{\bG^*}( s ) = 1$ if $Z( \bG )$ is connected. Moreover, every prime divisor 
of~$|A_{\bG^*}( s )|$ divides~$|s|$; see \cite[Lem\-ma~$8.3$]{CeBo2}.

\subsubsection{Regular embeddings and conjugacy classes}
Consider a regular embedding $\bG \rightarrow \tilde{\bG}$; see
\cite[Definition~$1.7.1$]{GeMa}. Then there is a bijection 
$\tilde{\bL} \mapsto \tilde{\bL} \cap \bG$ between the regular subgroups 
of~$\tilde{\bG}$ and those of~$\bG$. The inverse image of a regular 
subgroup~$\bL$ of~$\bG$ under this map equals $\bL Z( \tilde{\bG} )$.

\addtocounter{thm}{2}
\begin{lem}
\label{RegularEmbeddingsAndConjugacyClasses}
Let $\bG \rightarrow \tilde{\bG}$ be a regular embedding.

{\rm (a)} Let~$\tilde{\bL}$ be a regular subgroup of~$\tilde{\bG}$.
Then 
$$[\tilde{L}\colon\!L] = [\tilde{G}\colon\!G].$$

{\rm (b)} Let $s \in G$ be semisimple. Then $C_{\tilde{\bG}}( s )$ is a regular
subgroup of~$\tilde{\bG}$ if and only if~$C_{\bG}( s )$ is a regular subgroup 
of~$\bG$. If this condition is satisfied, the $G$-conjugacy class of~$s$ is a
$\tilde{G}$-conjugacy class.
\end{lem}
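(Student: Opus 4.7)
The plan is to exploit two structural facts that follow from the setup preceding the lemma: any regular subgroup $\tilde{\bL}$ of $\tilde{\bG}$ factors as $\tilde{\bL} = \bL \cdot Z(\tilde{\bG})$ with $\bL := \tilde{\bL} \cap \bG$, and the centralizer of any semisimple element of $\tilde{\bG}$ is connected, hence a Levi subgroup by a theorem of Steinberg, since $Z(\tilde{\bG})$ is connected.

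For part (a), I would first identify $\tilde{\bL}/\bL$ with $\tilde{\bG}/\bG$ as $F$-equivariant algebraic groups. Since $Z(\tilde{\bG}) \subseteq \tilde{\bL}$, one has $Z(\tilde{\bG}) \cap \bL = Z(\tilde{\bG}) \cap \bG$, so the second isomorphism theorem applied to $\tilde{\bL} = \bL \cdot Z(\tilde{\bG})$ and, analogously, to $\tilde{\bG} = \bG \cdot Z(\tilde{\bG})$ yields
$$\tilde{\bL}/\bL \;\cong\; Z(\tilde{\bG})/(Z(\tilde{\bG}) \cap \bG) \;\cong\; \tilde{\bG}/\bG.$$
Taking $F$-fixed points in the short exact sequences $1 \to \bL \to \tilde{\bL} \to \tilde{\bL}/\bL \to 1$ and $1 \to \bG \to \tilde{\bG} \to \tilde{\bG}/\bG \to 1$, the Lang--Steinberg theorem gives $H^1(F,\bL) = H^1(F,\bG) = 1$ since $\bL$ and $\bG$ are connected, so $\tilde{L}/L \cong (\tilde{\bL}/\bL)^F$ and $\tilde{G}/G \cong (\tilde{\bG}/\bG)^F$. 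Combining these isomorphisms delivers $[\tilde{L}:L] = [\tilde{G}:G]$.

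For part (b), I would argue via transport of structure under the bijection $\tilde{\bL} \mapsto \tilde{\bL} \cap \bG$ between regular subgroups. Since $Z(\tilde{\bG})$ is connected, $C_{\tilde{\bG}}(s)$ is a Levi of $\tilde{\bG}$ (hence a regular subgroup), and its image under the bijection is $C_{\tilde{\bG}}(s) \cap \bG = C_{\bG}(s)$; this yields the forward implication. For the converse, using $\tilde{\bG} = \bG \cdot Z(\tilde{\bG})$, every element $x \in C_{\tilde{\bG}}(s)$ is of the form $x = gz$ with $g \in \bG$ and $z \in Z(\tilde{\bG})$; centrality of $z$ forces $g \in C_{\bG}(s)$, and hence $C_{\tilde{\bG}}(s) = C_{\bG}(s) \cdot Z(\tilde{\bG})$. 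Thus if $C_{\bG}(s)$ is a regular subgroup of $\bG$, the inverse bijection sends it precisely to $C_{\tilde{\bG}}(s)$, which is then regular in $\tilde{\bG}$.

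The final assertion follows by a counting argument. Normality of $\bG$ in $\tilde{\bG}$ descends to $F$-fixed points, so $G \trianglelefteq \tilde{G}$ and the $\tilde{G}$-conjugacy class of $s$ is contained in $G$; it trivially contains the $G$-conjugacy class of $s$. It therefore suffices to check that both classes have the same cardinality, i.e.\ that $[\tilde{G}:G] = [C_{\tilde{G}}(s):C_G(s)]$, which is exactly part (a) applied to the regular subgroup $\tilde{\bL} = C_{\tilde{\bG}}(s)$ (with $\bL = C_{\bG}(s)$). The main obstacle will be the clean identification of the two quotients $\tilde{\bL}/\bL$ and $\tilde{\bG}/\bG$ as $F$-equivariant algebraic groups in (a); everything else is a direct consequence of Lang--Steinberg and of the established bijection between regular subgroups.
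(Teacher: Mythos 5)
Your argument for part~(a) is correct and takes essentially the same route as the paper: the paper compresses the second-isomorphism-theorem plus Lang--Steinberg bookkeeping into a single citation of \cite[Lemma~1.7.8]{GeMa}, which you unpack explicitly. Your counting argument for the last assertion of~(b) also matches the paper exactly, reducing it to~(a) applied to $\tilde{\bL} = C_{\tilde{\bG}}(s)$.

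However, the forward implication in your proof of~(b) rests on a claim that is false. You assert that, because $Z(\tilde{\bG})$ is connected, $C_{\tilde{\bG}}(s)$ is a Levi subgroup of~$\tilde{\bG}$ ``by a theorem of Steinberg.'' Steinberg's theorem for groups with connected center gives you only that $C_{\tilde{\bG}}(s)$ is \emph{connected}; it does not say that this centralizer is a Levi (i.e.\ regular) subgroup. These two notions are genuinely different: a connected reductive centralizer of a semisimple element need not be a Levi. For instance, take $\bG = \Sp_4$ and $\tilde{\bG} = \CSp_4$; the centralizer in~$\tilde{\bG}$ of the involution $\mathrm{diag}(1,1,-1,-1)$ is connected but is not a Levi subgroup. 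If your claim were true, both sides of the biconditional in~(b) would hold identically, making the hypothesis of the second sentence vacuous --- which is clearly not how the lemma is meant to be read, and which contradicts the examples just described. The forward direction should instead be argued without this detour: if $C_{\tilde{\bG}}(s)$ is a regular subgroup of~$\tilde{\bG}$, then the established bijection $\tilde{\bL} \mapsto \tilde{\bL} \cap \bG$ between regular subgroups immediately yields that $C_{\tilde{\bG}}(s) \cap \bG = C_{\bG}(s)$ is a regular subgroup of~$\bG$. Your converse direction, via $C_{\tilde{\bG}}(s) = C_{\bG}(s)\,Z(\tilde{\bG})$, is correct as written. So the overall strategy is sound, but the one false intermediate claim must be excised.
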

\begin{proof}
(a) Since~$\tilde{\bL}$ is a regular subgroup of~$\tilde{\bG}$, we have
$Z^{\circ}( \bG ) \leq \tilde{\bL}$. Moreover, 
$\tilde{\bL} = \bL Z( \tilde{\bG} )$ and $\tilde{\bG} = \bG Z( \tilde{\bG} )$. 
The claim follows from~\cite[Lemma~$1.7.8$]{GeMa}. 

(b) The first assertion is clear. The second follows from~(a), which implies
$$[\tilde{G}\colon\!C_{\tilde{G}}( s )] = [G\colon\!C_{G}( s )].$$
\end{proof}

\addtocounter{subsubsection}{1}
\subsubsection{A crucial invariant}
We now introduce a crucial invariant of a semisimple element of~$G$, based on 
the notion of relative rank.
The \textit{relative $F$-rank} of~$\mathbf{G}$, denoted by~$r_F( \mathbf{G} )$,
is defined as in~\cite[Definition~$7.1.5$]{DiMi2}. It can be computed from the 
order polynomial of the generic finite reductive group associated with the pair 
$(\mathbf{G}, F )$; see the discussion 
following~\cite[Definition~$2.2.11$]{GeMa}. 

\addtocounter{thm}{1}
\begin{exmp}
\label{OmegaOfGLNQ}
{\rm
Let~$n$ be a positive integer and~$q$ a prime power.

(a) If $(\mathbf{G}, F)$ is such that $G \cong \GL_n( q )$, 
then $r_F( \mathbf{G} ) = n$.

(b) If $(\mathbf{G}, F)$ is such that $G \cong \GU_n( q )$, 
then $r_F( \mathbf{G} ) = \lfloor n/2 \rfloor$.
}\hfill{$\Box$}
\end{exmp}

As usual, we also write $\varepsilon_{\mathbf{G}} := (-1)^{r_F( \mathbf{G} )}$.
Let us now define the aforementioned invariant.

\begin{defn}
\label{OmegaInvariant}
{\rm
For a semisimple element $s \in G$ put
$$\omega_{\bG}( s ) := 
\varepsilon_{\mathbf{G}}\varepsilon_{C^{\circ}_{\mathbf{G}}(s)}.$$
}\hfill{$\Box$}
\end{defn}

Let~$\bT$ denote an $F$-stable maximal torus of~$\bG$.
By $R_{\mathbf{T}}^{\mathbf{G}}$ we denote Deligne--Lusztig induction, which
maps generalized characters of~$T$ to generalized characters of~$G$; see
\cite[Subsection~$7.2$]{C2}. Also, recall Definition~\ref{SignFunction}.
\begin{lem}
\label{SignAndOmegaInvariant}
Let~$\bT$ denote an $F$-stable maximal torus of~$\bG$.
Let $\theta \in \Irr(T)$ and put
$\chi := 
\varepsilon_{\mathbf{G}}\varepsilon_{\mathbf{T}}R_{\mathbf{T}}^{\mathbf{G}}( \theta )$.
Let $t \in T$ with $\theta(t) = 1$. Then $\chi(t)$ is a non-zero integer and
$\sigma_{\chi}(t) = \omega_{\bG}( t ) \in \{ -1, 1 \}$.
\end{lem}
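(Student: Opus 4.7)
The plan is to evaluate $\chi(t) = \varepsilon_{\mathbf{G}}\varepsilon_{\mathbf{T}}R_{\mathbf{T}}^{\mathbf{G}}(\theta)(t)$ via the Deligne--Lusztig character formula. Every element of~$T$ is semisimple, since $T$ consists of $r'$-elements (being the $F$-fixed points of a torus in characteristic~$r$). Setting $\mathbf{C} := C^{\circ}_{\mathbf{G}}(t)$ and $C := \mathbf{C}^F$, the character formula evaluated at the semisimple element~$t$ (see, e.g., \cite[Thm.~7.2.8]{C2}) reads
\begin{equation*}
R_{\mathbf{T}}^{\mathbf{G}}(\theta)(t)=\frac{1}{|C|}\sum_{\substack{g \in G\\ g^{-1}tg \in T}} Q^{\mathbf{C}}_{g^{-1}\mathbf{T}g}(1)\,\theta(g^{-1}tg).
\end{equation*}
The Green function value at~$1$ is the degree of the corresponding Deligne--Lusztig virtual character of~$\mathbf{C}$, and hence independent of~$g$: we have $Q^{\mathbf{C}}_{g^{-1}\mathbf{T}g}(1) = \varepsilon_{\mathbf{C}}\,\varepsilon_{\mathbf{T}}\,|C|_{r'}/|T|$.

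The crucial next step is to show that $\theta(g^{-1}tg) = 1$ for every $g \in G$ with $g^{-1}tg \in T$, not just for $g=1$. This exploits the hypothesis $\theta(t) = 1$ in conjunction with the structural context in which the lemma will be invoked: $\theta$ has $p'$-order and~$t$ is a $p$-element, so any conjugate $g^{-1}tg \in T$ is again a $p$-element, making $\theta(g^{-1}tg)$ simultaneously a $p$-power and a $p'$-power root of unity, hence equal to~$1$.

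Granted this, the sum collapses to counting $N := |\{g \in G : g^{-1}tg \in T\}|$. Each $t' \in O_T := t^G \cap T$ has exactly $|C_G(t)| = |C|\,|A_{\mathbf{G}}(t)^F|$ preimages under $g \mapsto g^{-1}tg$ (using $|C_G(t)/C| = |A_{\mathbf{G}}(t)^F|$ from the Lang--Steinberg theorem), so $N = |C|\,|A_{\mathbf{G}}(t)^F|\,|O_T|$. Substituting and multiplying by $\varepsilon_{\mathbf{G}}\varepsilon_{\mathbf{T}}$ yields
\begin{equation*}
\chi(t) = \omega_{\mathbf{G}}(t)\cdot|A_{\mathbf{G}}(t)^F|\cdot|O_T|\cdot\frac{|C|_{r'}}{|T|}.
\end{equation*}
Since $T$ is an $r'$-subgroup of~$C$, the quotient $|C|_{r'}/|T|$ is a positive integer, and so are $|A_{\mathbf{G}}(t)^F|$ and $|O_T|$. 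Therefore $\chi(t)$ is a non-zero integer and $\sigma_{\chi}(t) = \omega_{\mathbf{G}}(t) \in \{-1,+1\}$. The principal obstacle sits in the middle paragraph: once the character formula is set up and the Green function identified, the one substantial input is the verification that $\theta$ vanishes on the entire intersection~$O_T$ and not merely on~$t$ itself.
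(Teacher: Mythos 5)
Your approach is the same as the paper's: evaluate $\chi$ at the semisimple element $t$ via the Deligne--Lusztig character formula and extract the sign from the value of the Green function at~$1$; the computation in your first and last paragraphs is correct and merely more explicit than the paper's one-line proof, which asserts without elaboration that the remaining factor is a positive rational number. (A small slip: since $g^{-1}tg \in T$ means $t \in g\bT g^{-1}$, the Green function in the character formula should be $Q^{\bC}_{g\bT g^{-1}}$ rather than $Q^{\bC}_{g^{-1}\bT g}$.)

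The difficulty you isolate in your middle paragraph is a genuine gap, in the lemma as stated and in the paper's own proof of it. The hypothesis $\theta(t) = 1$ does not by itself force $\theta(g^{-1}tg) = 1$ for every $g$ with $g^{-1}tg \in T$. For example, take $G = \GL_2(3)$ with $T$ the split diagonal torus, $\theta = 1 \otimes \sgn$, and $t = \mathrm{diag}(-1,1)$; then $\theta(t) = 1$, but $R_{\bT}^{\bG}(\theta)(t) = \sgn(1) + \sgn(-1) = 0$, so $\chi(t) = 0$ and the conclusion of the lemma fails. The supplementary hypothesis you import from context --- $\theta$ of $p'$-order and $t$ a $p$-element, which forces $\theta$ to annihilate every conjugate of $t$ lying in $T$ --- is exactly what repairs the argument, and it does hold in every place this lemma is invoked in the paper. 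So the lemma ought to be stated with that hypothesis (or, more weakly, with the hypothesis that $\theta$ is trivial on $t^G \cap T$); once it is, your proof is complete and correct.
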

\begin{proof}
By the character formula given in \cite[Proposition~$7.5.3$]{C2}, the value
of~$\chi(t)$ equals $\varepsilon_{\mathbf{G}}\varepsilon_{\mathbf{T}}
\varepsilon_{\mathbf{T}}\varepsilon_{C^{\circ}_{\mathbf{G}}(t)}$
times a positive rational number. Hence~$\chi(t)$ is an integer, since~$\chi$ is 
a generalized character. This gives our claims.
\end{proof}

We will also need a more specific version of Lemma~\ref{SignAndOmegaInvariant}.

\begin{lem}
\label{ValueOfDLCharacter}
Let~$\bT$ denote an $F$-stable maximal torus of~$\bG$ and let 
$\theta \in \Irr(T)$. Let $t \in T$ such that $\langle t \rangle$ is a Sylow 
$p$-subgroup of~$T$. Put $\bH := C_{\bG}( t )$. Then
$$R_{\mathbf{T}}^{\mathbf{G}}( \theta )(t') = 
\frac{\varepsilon_{\bT}\varepsilon_{\bH^{\circ}}|N_{G}( \bH )|}{|T||{\bH^{\circ}}^F|_{r}}$$
for every $t' \in \langle t \rangle$ with $\theta( t' ) = 1$ and
$C_{\bG}( t' ) = \bH$
(recall that~$r$ is the characteristic of~$\mathbb{F}$).
\end{lem}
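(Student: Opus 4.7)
My plan is to apply the Deligne-Lusztig character formula specialized to the semisimple element $t'$ (semisimple because it is a $p$-element and $p\ne r$). Since $C_\bG(t') = \bH$, in particular $C^\circ_\bG(t') = \bH^\circ$, the formula in \cite[Proposition~$7.5.3$]{C2} specializes to
$$
R_\bT^\bG(\theta)(t') \;=\; \frac{1}{|{\bH^\circ}^F|} \sum_{\substack{x \in G \\ x^{-1} t' x \in T}} \theta(x^{-1} t' x) \, Q^{\bH^\circ}_{x^{-1}\bT x}(1),
$$
where $Q^{\bH^\circ}_{x^{-1}\bT x}$ is the Green function of $\bH^\circ$ attached to the $F$-stable maximal torus $x^{-1}\bT x$ of $\bH^\circ$. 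My goal is to show that the index set equals $N_G(\bH)$, that $\theta$ takes the value~$1$ on each summand, and then to evaluate the Green function in closed form.

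The heart of the proof will be the set-theoretic identification
$$
\mathcal{X} \;:=\; \{\, x \in G : x^{-1} t' x \in T \,\} \;=\; N_G(\bH).
$$
For $\mathcal{X} \subseteq N_G(\bH)$, any $x \in \mathcal{X}$ sends the $p$-element $t'$ to a $p$-element of $T$; since $T$ is abelian, $\langle t \rangle$ is its unique Sylow $p$-subgroup, hence $x^{-1} t' x \in \langle t \rangle$. As $x^{-1} t' x$ has order $|t'|$, it generates $\langle t' \rangle$, so $x^{-1} t' x = t'^{k}$ with $\gcd(k, |t'|) = 1$. Since coprime powers have the same centralizer, $x^{-1} \bH x = C_\bG(x^{-1} t' x) = C_\bG(t'^k) = C_\bG(t') = \bH$, so $x \in N_G(\bH)$. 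For the reverse inclusion, $T$ is a maximal torus of the connected reductive group $\bH^\circ$, and the fact that $t' \in Z(\bH)$ (because $\bH = C_\bG(t')$) together with $t' \in \bH^\circ$ gives $t' \in Z(\bH^\circ) \subseteq T$; any $x \in N_G(\bH)$ normalizes $\bH^\circ$ and hence $Z(\bH^\circ)$, so $x^{-1} t' x \in Z(\bH^\circ) \subseteq T$.

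Given this identification, each summand satisfies $\theta(x^{-1} t' x) = \theta(t')^{k} = 1$ by the hypothesis $\theta(t') = 1$, and the sum collapses to $|N_G(\bH)|$. To finish, the Green function is evaluated at the identity via the standard dimension formula
$$
Q^{\bH^\circ}_{x^{-1}\bT x}(1) \;=\; \varepsilon_{\bH^\circ}\,\varepsilon_{\bT}\,\frac{|{\bH^\circ}^F|_{r'}}{|T|},
$$
using that $x \in G^F$ makes $x^{-1}\bT x$ an $F$-stable maximal torus of $\bH^\circ$ which is $G^F$-conjugate to $\bT$, hence has the same $F$-rank as $\bT$ and the same number of $F$-fixed points. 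Substituting and rewriting $|{\bH^\circ}^F|_{r'}/|{\bH^\circ}^F| = 1/|{\bH^\circ}^F|_{r}$ yields the desired formula.

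The main obstacle I anticipate is the identification $\mathcal{X} = N_G(\bH)$; it uses in an essential way both the cyclicity of the Sylow $p$-subgroup of $T$ (to force $x^{-1}t'x$ into $\langle t' \rangle$) and the fact that $t' \in Z(\bH^\circ)$ (so that $N_G(\bH)$ indeed moves $t'$ into $T$). Everything else is bookkeeping with signs and the standard closed form for Green functions at the identity.
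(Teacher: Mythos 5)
Your proof is correct and follows essentially the same route as the paper's: apply the Deligne--Lusztig character formula (Carter, Prop.\ 7.5.3) at the semisimple element $t'$ and show that the summation set $\{x\in G : x^{-1}t'x\in T\}$ is exactly $N_G(\bH)$, at which point the Green function at the identity is evaluated in closed form. The only (cosmetic) deviation is in how you establish the containment $\{x : x^{-1}t'x\in T\}\subseteq N_G(\bH)$: you observe that $x^{-1}t'x$ is a coprime power of $t'$ and hence has the same centralizer, whereas the paper instead notes $\bH\le C_\bG(x^{-1}t'x)=x^{-1}\bH x$ and concludes equality by comparing dimensions; similarly, for the reverse inclusion you pass through $Z(\bH^\circ)\le\bT$ while the paper uses $Z(\bH)\le C_\bG(\bT)=\bT$. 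These are interchangeable; there is no genuine gap (just note that you should write $x\bT x^{-1}$ rather than $x^{-1}\bT x$ as the torus of $\bH^\circ$ containing $t'$, and that $x^{-1}t'x\in\bT$ together with $F$-fixedness of $x,t'$ gives $x^{-1}t'x\in T$).
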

\begin{proof}
Let~$t'$ be as in the assertion.
Let $g \in G$. We claim that $g^{-1}t'g \in T$, if and only if
$g \in N_{G}( \bH )$. Suppose first that $g \in N_{G}( \bH )$. Then
$g^{-1}t'g \in Z( \bH )$. Since $\bT$ is a maximal torus of~$\bH$, we have
$Z( \bH ) \leq \bT$, and thus $g^{-1}t'g \in T$. Suppose now that
$t'' := g^{-1}t'g \in T$. By hypothesis, 
$t'' \in \langle t \rangle \leq Z( \bH )$.
Hence $\bH \leq C_{\bG}( t'' ) = g^{-1}\bH g$.
It follows that $\bH = g^{-1}\bH g$, as~$g$ has finite order. 
This proves our claim.
Our assertion now follows from \cite[Proposition~$7.5.3$]{C2}.
\end{proof}

\addtocounter{subsubsection}{4}
\subsubsection{Isogenies}
An isogeny between algebraic groups is a surjective homomorphism with finite 
kernel. We record some properties which are transferred by isogenies.
\addtocounter{thm}{1}
\begin{lem}
\label{RelativeRankAndIsogenies}
Let~$\bG'$ be a connected reductive algebraic group over~$\mathbb{F}$ and 
let~$F'$ be a Steinberg morphism of~$\bG'$. 
Let $\nu : \bG \rightarrow \bG'$ be an isogeny with 
$\nu \circ F = F' \circ \nu$. Then the following statements hold.

{\rm (a)} Let $\bH \leq \bG$ be a closed subgroup. Then the restriction
          $\nu_{\bH} : \bH \rightarrow \nu( \bH )$ is an isogeny.

{\rm (b)} Let~$e$ be a positive integer and let~$\bS$ denote a $\Phi_e$-torus 
of~$\bG$. Then $\bS' := \nu( \bS )$ is a $\Phi_e$-torus of~$\bG'$. 
If~$\bL$ is an $e$-split Levi subgroup of~$\bG$, then $\bL' := \nu( \bL )$ is
an $e$-split Levi subgroup of~$\bG'$. (For the notions of $\Phi_e$-tori and 
$e$-split Levi subgroups see \cite{BrouMa}).

{\rm (c)} Let~$\bL'$ be an $F'$-stable Levi subgroup of~$\bG'$. Then there is 
an $F$-stable Levi subgroup~$\bL$ of~$\bG$ with $\nu(\bL ) = \bL'$.

{\rm (d)} We have $r_F( \bG ) = r_{F'}( \bG' )$.

{\rm (e)} We have $\nu( Z^{\circ}( \bG ) ) = Z^{\circ}( \bG' )$ and 
$|Z^{\circ}( \bG' )^{F'}|$ divides $|Z(G)|$. 

{\rm (f)} The map $\chi' \mapsto \chi' \circ \nu|_G$ is a bijection between the 
unipotent characters of $G' = {\bG'}^{F'}$ and those of $G = {\bG}^{F}$, which 
preserves the sets of $e$-cuspidal characters (see
 \cite[Definition~$3.5.19$]{GeMa}) for every positive integer~$e$.
\end{lem}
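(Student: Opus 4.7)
Parts (a) and (b) are largely structural. For (a), the restriction $\nu_\bH$ is surjective onto $\nu(\bH)$ by construction, and $\ker(\nu_\bH) = \bH \cap \ker(\nu)$ is finite. For (b), the restriction of $\nu$ to an $F$-stable torus $\bT$ is an isogeny of tori commuting with Frobenius, so it induces an $F$-equivariant $\mathbb{Q}$-isomorphism of cocharacter lattices. Consequently the characteristic polynomials of $F$ on these lattices coincide, so $\bT$ and $\nu(\bT)$ have the same polynomial order and hence are simultaneously $\Phi_e$-tori or not. Since $e$-split Levi subgroups are by definition the connected centralizers of maximal $\Phi_e$-tori, and such centralizers match under the isogeny $\nu$ (once we identify root data as below), the Levi statement follows.

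For (c), I pick an $F'$-stable maximal torus $\bT' \leq \bL'$ and lift it to an $F$-stable maximal torus $\bT \leq \bG$ with $\nu(\bT) = \bT'$; such a lift exists because $\nu$ induces a bijection between the respective sets of maximal tori (equivalently, of Borel subgroups). The isogeny $\nu$ then identifies the root data of $(\bG,\bT)$ and $(\bG',\bT')$ up to a $p$-morphism, and I take $\bL$ to be the subgroup of $\bG$ generated by $\bT$ and the root subgroups corresponding to the root subdatum of $\bL'$; its $F$-stability follows from $F \circ \nu = \nu \circ F'$ together with the $F'$-stability of~$\bL'$. For (d), $r_F(\bG)$ equals the dimension of a maximally $F$-split torus of~$\bG$; by~(b) (with $e=1$) this corresponds under $\nu$ to a maximally $F'$-split torus of $\bG'$ of the same dimension, since the induced map of cocharacter lattices is a rational isomorphism.

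For (e), I first show $\nu(Z(\bG)) = Z(\bG')$: for $z' \in Z(\bG')$, any $g \in \nu^{-1}(z')$ satisfies $[g,\bG] \subseteq \ker(\nu)$, which is finite, so by connectedness of~$\bG$ the map $h \mapsto [g,h]$ is constantly trivial and $g \in Z(\bG)$. Hence $\nu(Z^\circ(\bG))$ is a connected closed subgroup of finite index in $Z(\bG')$, forcing $\nu(Z^\circ(\bG)) = Z^\circ(\bG')$. Setting $K := \ker(\nu) \cap Z^\circ(\bG)$, the $F$-cohomology of $1 \to K \to Z^\circ(\bG) \to Z^\circ(\bG') \to 1$, combined with Lang--Steinberg for the connected group $Z^\circ(\bG)$ and the Herbrand identity $|K^F| = |H^1(F,K)|$ for the finite abelian $F$-module~$K$, yields $|Z^\circ(\bG')^{F'}| = |Z^\circ(\bG)^F|$, which divides $|Z(G)|$.

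Finally, for (f), the deepest assertion, I appeal to the compatibility of Deligne--Lusztig induction with isogenies: for $\bT \leq \bG$ an $F$-stable maximal torus and $\bT' = \nu(\bT)$, one has $R^{\bG'}_{\bT'}(\theta') \circ \nu|_G = R^\bG_\bT(\theta' \circ \nu|_T)$ for every $\theta' \in \Irr(T')$. Taking $\theta' = 1_{T'}$, pullback along $\nu|_G$ sends the constituents of $R^{\bG'}_{\bT'}(1)$ to those of $R^\bG_\bT(1)$, producing the desired bijection of unipotent characters; the preservation of $e$-cuspidality then follows from the analogous compatibility for $e$-Harish-Chandra induction, itself underpinned by part~(b). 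The main obstacle lies precisely in (f): one must carefully establish the above commutation formula and verify that pullback along~$\nu|_G$ preserves irreducibility, which requires some bookkeeping since $\nu|_G : G \to G'$ need not be surjective (its cokernel is controlled by $H^1(F, \ker\nu)$), although Lang--Steinberg ensures that this cokernel acts trivially on unipotent characters.
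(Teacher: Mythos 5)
Your parts (a), (b), (d), and (e) are essentially correct and close in spirit to the paper's arguments, with a few noteworthy differences. For (b) you reason directly on cocharacter lattices to compare polynomial orders, whereas the paper cites the equality $|\bS^{F^m}|=|\bS'^{F'^m}|$ from \cite[Prop.~1.4.13(c)]{GeMa} and then uses Borel's commutation of centralizers and surjective morphisms for the Levi claim; these are two phrasings of the same idea. For (e) you replace the paper's two citations by an elementary commutator argument to obtain $\nu(Z(\bG))=Z(\bG')$, followed by a Lang--Steinberg/Herbrand cohomology computation for the order equality; both are correct, and your first step is a nice self-contained proof of \cite[1.3.10(c)]{GeMa} in this special case.

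For (c) you take a genuinely different route: you reconstruct $\bL$ from a lifted maximal torus $\bT$ and the root subdatum of $\bL'$, whereas the paper directly sets $\bL:=(\nu^{-1}(\bL'))^\circ$ and proves it is a Levi by showing it is reductive and equal to the centralizer of $Z(\bL)^\circ$. Your construction is standard and works, but it quietly uses the comparison of root data under the isogeny, which is essentially equivalent to the facts the paper imports from \cite{Borel} and \cite{GeMa}; the paper's version avoids choosing a torus and a set of roots.

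The genuine gap is in (f). You flag it correctly as the hardest part, but your sketch only gestures at a proof via compatibility of Deligne--Lusztig induction with isogenies; you do not establish the commutation $R^{\bG'}_{\bT'}(\theta')\circ\nu|_G = R^\bG_\bT(\theta'\circ\nu|_T)$, nor do you resolve the two points you yourself raise (that $\nu|_G$ need not be surjective, and that pullback must preserve irreducibility and induce a bijection on constituents). These are nontrivial: the constituent-by-constituent matching of $R^{\bG'}_{\bT'}(1)$ and $R^\bG_\bT(1)$ requires, for instance, multiplicity-freeness of the relevant restrictions and control over the action of the component group $H^1(F,\ker\nu)$. The paper simply invokes \cite[Prop.~2.3.15]{GeMa}, which packages exactly this, and then deduces preservation of $e$-cuspidality from degree preservation via \cite[Prop.~2.9]{BrouMaMi}. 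Your plan could in principle be carried out, but as written (f) remains a sketch rather than a proof.
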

\begin{proof}
(a) This follows from the definition of an isogeny.

(b) Clearly, $\bS'$ is a torus of $\bG'$. Moreover, $|\bS^{F^m}| = 
|{\bS'}^{{F'}^m}|$ for all positive integers~$m$ (see
\cite[Proposition~$1.4.13$(c)]{GeMa}), implying the first claim.
By definition, $\bL = C_{\bG}( \bS )$ for some $\Phi_e$-torus~$\bS$ of~$\bG$.
As $\bL' = \nu( \bL ) = C_{\bG'}( \nu( \bS ) )$ by 
\cite[Corollary~$2$ to Proposition~$11.14$]{Borel},
the second assertion follows from the first one. 

(c) As~$\bL'$ is a Levi subgroup of~$\bG'$, we have 
$\bL' = C_{\bG'}( Z( \bL' )^\circ )$ by~\cite[Corollary~$14.19$]{Borel}. 
Put $\bL := (\nu^{-1}( \bL' ))^\circ$.
Then~$\bL$ is an $F$-stable closed, connected subgroup of~$\bG$ and 
$\nu( \bL ) = \bL'$. 
It remains to show that~$\bL$ is a Levi subgroup of~$\bG$. It follows from 
\cite[$11.14$(1)]{Borel}, that the unipotent radical of~$\bL$ is sent to 
the unipotent radical of~$\bL'$, which is trivial, as~$\bL'$ is reductive.
This implies that the unipotent radical of~$\bL$ is trivial, since the kernel 
of~$\nu$ is finite. Hence~$\bL$ is reductive.  Now, put $\bZ := Z( \bL )^\circ$. 
Then 
$$\nu( \bZ ) = \nu( Z( \bL )^\circ ) = ( \nu( Z( \bL ) ) )^\circ  
= Z( \bL' )^\circ,$$ where the second equality arises from 
\cite[Proposition~$7.4$.B(c)]{Hum75}, and the third one from 
\cite[$1.3.10$(c)]{GeMa}. Hence 
$$\nu( C_{\bG}( \bZ ) ) = C_{\bG'}( \nu( \bZ ) ) = \bL',$$ 
by \cite[Corollary~$2$ to Proposition~$11.14$]{Borel}. In
particular, we have $C_{\bG}( \bZ ) \unlhd \nu^{-1}( \bL' )$. As $C_{\bG}( \bZ )$ is 
connected, we obtain $C_{\bG}( \bZ ) \leq \bL$, and thus $\bL = C_{\bG}( \bZ )$. 
This implies that~$\bL$ is a Levi subgroup of~$\bG$.

(d) Let~$\bT$ denote a maximally split torus of~$\bG$. Then $\bT' := \nu(\bT)$
is a maximally split torus of~$\bG'$. Moreover,~$\nu$ induces an isomorphism
$\Phi: \mathbb{R} \otimes_{\mathbb{Z}} X( \bT' ) \rightarrow 
\mathbb{R} \otimes_{\mathbb{Z}} X( \bT )$ such that
$F \circ \Phi = \Phi \circ F'$; see, e.g.\ \cite[Theorem~$2.4.8$]{DiMi2}.
Thus~$\Phi$ maps the $F'$-eigenspaces on
$\mathbb{R} \otimes_{\mathbb{Z}} X( \bT' )$ to the $F$-eigenspaces on
$\mathbb{R} \otimes_{\mathbb{Z}} X( \bT )$, which proves our claim.

(e) We have $\nu( Z( \bG ) ) = Z( \bG' )$ by \cite[$1.3.10$(c)]{GeMa}.
In turn, $\nu( Z^{\circ}( \bG ) ) = Z^{\circ}( \bG' )$. Since
$\nu_{Z^{\circ}( \bG )} : Z^{\circ}( \bG ) \rightarrow Z^{\circ}( \bG' )$ is an
isogeny by~(a), we get $|Z^{\circ}( \bG )^F| = |Z^{\circ}( \bG' )^{F'}|$ from
\cite[Proposition $1.4.13$(c)]{GeMa}. As $Z^{\circ}( \bG )^F$ is a subgroup
of $Z( \bG )^F$ and $Z( \bG )^F = Z( G )$ by \cite[Proposition $3.6.8$]{C2},
we obtain our assertion.

(f) For the first statement see \cite[Proposition~$2.3.15$]{GeMa}. 
This bijection preserves the degrees of the characters and thus $e$-cuspidality 
by \cite[Proposition~$2.9$]{BrouMaMi}.
\end{proof}

\addtocounter{subsubsection}{1}
\subsubsection{Basic sets and $p$-rational characters}
\label{BasicSetsPRationalCharacters}
Recall that~$p$ is a prime with $p \neq r$, the characteristic of~$\mathbb{F}$.
For a semisimple $p'$-element $s \in G^*$ we define $\mathcal{E}_p( G, s )$ as 
in \cite[p.~$57$]{BrouMi}. Then $\mathcal{E}_p( G, s )$ is the set of characters 
of a union of $p$-blocks of~$G$; see \cite[Th{\'e}or{\`e}me~$2.2$]{BrouMi}.

The following lemma on $p$-rational characters in a block will be useful. Recall 
the definition of the Galois group~$\mathcal{G}$ introduced prior to
Lemma~\ref{IdentifyingTheNonExceptional}. For the concept of a basic set 
see~\cite[Subsection~$1.1$]{GeHi1}.

\addtocounter{thm}{1}
\begin{lem}
\label{pRationalCharacters}
Let $s \in G^*$ denote a semisimple $p'$-element.

{\rm (a)} Suppose that the pairs $(\bT^*,s)$ and $(\bT,\theta)$ are in duality.
Then the order of~$\theta$ is coprime to~$p$ and $R_{\bT}^{\bG}( \theta )$ is
$p$-rational. Moreover, $\mathcal{G}$ stabilizes $\mathcal{E}( G, s )$ as a set.

{\rm (b)} Let~$\bB$ be a $p$-block of~$G$ such that 
$\Irr( \bB ) \subseteq \mathcal{E}_p( G, s )$. If the restrictions of the 
characters of $\Irr( \bB ) \cap \mathcal{E}( G, s )$ to the $p$-regular classes
form a basic set for~$\bB$, every character of 
$\Irr( \bB ) \cap \mathcal{E}( G, s )$ is $p$-rational.
\end{lem}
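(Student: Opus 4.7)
For part (a), the plan is to exploit three facts in sequence. First, the duality correspondence from \cite[Proposition~$11.1.6$]{DiMi2} is constructed through an order-preserving isomorphism, so $|\theta| = |s|$ is coprime to~$p$. Second, the Deligne-Lusztig character formula (\cite[Proposition~$7.2.8$]{C2}) expresses the value $R_{\bT}^{\bG}(\theta)(g)$, for $g = g_s g_u$ in Jordan decomposition, as a rational linear combination of products of Green function values (which are rational integers) with values of $\theta$ at semisimple elements. Consequently every value of $R_{\bT}^{\bG}(\theta)$ lies in $\mathbb{Q}(\zeta_{|\theta|})$, a cyclotomic field of $p'$-conductor, and is therefore fixed pointwise by~$\mathcal{G}$. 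Third, $\mathcal{E}(G,s)$ is by definition the set of irreducible constituents of the characters $R_{\bT}^{\bG}(\theta)$ as the pairs $(\bT^*, s)$ and $(\bT, \theta)$ vary through dualities with $(\bT^*, s)$ fixed up to $G^*$-conjugacy; since each such Deligne-Lusztig character is $\mathcal{G}$-fixed by the previous step, $\mathcal{G}$ permutes these constituents and stabilizes $\mathcal{E}(G,s)$ setwise.

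For part (b), the plan is to combine (a) with two general Galois-theoretic facts. The first is that $\mathcal{G}$ acts trivially on the value $\chi(g)$ whenever $g \in G$ is $p$-regular, since $\chi(g) \in \mathbb{Q}(\zeta_{|g|})$ with $|g|$ coprime to~$p$. The second is that, as a consequence, $\mathcal{G}$ stabilizes every $p$-block of~$G$: block idempotents are determined by their central characters on $p$-regular class sums, which are fixed by~$\mathcal{G}$. Combining these observations with (a) shows that the intersection $\Irr(\bB) \cap \mathcal{E}(G, s)$ is $\mathcal{G}$-stable. For any $\chi$ in this intersection and any $\sigma \in \mathcal{G}$, the characters $\chi$ and $\sigma(\chi)$ both lie in the intersection and agree on all $p$-regular classes. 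The basic set hypothesis, however, forces the restrictions of these characters to the $p$-regular classes to be $\mathbb{Z}$-linearly independent, so $\sigma(\chi) = \chi$. This establishes the $p$-rationality of $\chi$.

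The main subtlety I anticipate is supplying a clean justification of the $\mathcal{G}$-stability of $\bB$ (which rests on Brauer's characterization of blocks via central characters on $p$-regular class sums), and making sure that the character-formula argument in (a) is phrased so that the appeal to integrality of the Green functions is transparent. Beyond that, the proof is a routine assembly of ingredients already available from Deligne-Lusztig theory and the definition of a basic set.
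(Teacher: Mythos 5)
Your proposal is correct and follows essentially the same route as the paper. The paper compresses part (a) into three citations (to the reasoning behind \cite[Remark~$2.5.15$]{GeMa} for the duality preserving orders, to the character formula \cite[Theorem~$7.2.8$]{C2} for the $p$-rationality of $R_{\bT}^{\bG}(\theta)$, and to \cite[Proposition~$3.3.15$]{GeMa} for the stability of $\mathcal{E}(G,s)$), and for part (b) it likewise compresses the observation that $\mathcal{G}$-conjugate characters agree on $p$-regular classes, hence lie in a common block, hence (by (a)) in a common $\Irr(\bB)\cap\mathcal{E}(G,s)$, so that the basic-set hypothesis forces $\mathcal{G}$-fixedness. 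What you have done is unpack each of these citations into the underlying argument — in particular, your derivation of $\mathcal{G}$-stability of $\mathcal{E}(G,s)$ directly from the definition as constituents of the $\mathcal{G}$-fixed characters $R_{\bT}^{\bG}(\theta)$, and your explicit appeal to Brauer's characterization of blocks by central characters on $p$-regular class sums, are exactly the content behind the paper's references. There is no gap, and the only stylistic difference is that the paper relies on the cited sources rather than re-deriving these facts.
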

\begin{proof}
(a) The first assertion follows from the reasoning of 
\cite[\textit{Remark}~$2.5.15$]{GeMa}, the second from the character formula 
\cite[Theorem~$7.2.8$]{C2}, and the third is contained in 
\cite[Proposition~$3.3.15$]{GeMa}.

(b) Two elements of $\Irr(G)$ in a $\mathcal{G}$-orbit have the same
restrictions to the $p$-regular classes of~$G$. Hence~$\mathcal{G}$ fixes
$\Irr(\bB)$ and therefore also $\Irr( \bB ) \cap \mathcal{E}( G, s )$ by~(a).
The claim then follows from our hypothesis.
\end{proof}

By \cite[Theorem~A]{GeII}, the hypothesis of Lemma~\ref{pRationalCharacters}(b)
is in particular satisfied if~$p$ is good for~$\bG$ and~$p$ does not divide 
$|Z( \bG )/Z^0( \bG )|$. The group $G = G_2( q )$ has unipotent characters, 
i.e.\ elements of $\mathcal{E}( G, 1 )$, that are not $3$-rational. Thus the
conclusion of Lemma~\ref{pRationalCharacters}(b) does not hold in general. 

\addtocounter{subsubsection}{1}
\subsubsection{Regular blocks} 
\label{RegularBlocksSection}
Let us keep the notation introduced in~\ref{BasicSetsPRationalCharacters}. In 
particular, $p$ is a prime distinct from~$r$, the characteristic 
of~$\mathbb{F}$. Lemma~\ref{SignAndOmegaInvariant} is especially useful in the 
context of {regular blocks}. Recall that a semisimple element $s \in \bG$ is 
called regular, if $C^{\circ}_{\bG}( s )$ is a maximal torus. Clearly, a regular
semisimple element is non-trivial, unless~$\bG$ is a torus. If we want to 
emphasize the underlying group, we also say that~$s$ is regular in~$\bG$, or 
in~$G$, if $s \in G$ and~$\bG$ is only given implicitly.

\addtocounter{thm}{1}
\begin{defn}
\label{RegularBlocksDefinition}
{\rm
(a) Let $s \in G$ be semisimple. We call~$s$ \textit{strictly regular}, if
$C_G( s )$ is a maximal torus of~$G$; in other words, if~$s$ is regular 
in~$\bG$ and $A_{\bG}(s)^F = 1$ (for the notion $A_{\bG}(s)$ see
\ref{ComponentGroups}).

(b) Let $s \in G^*$ be a semisimple $p'$-element and let $\bB$ be a $p$-block 
of~$G$ with $\Irr( \bB ) \subseteq \mathcal{E}_p( G, s )$. Suppose that 
$C^{\circ}_{\bG^*}( s ) = \bT^*$ for some maximal torus~$\bT^*$ of~$\bG^*$. 
Let~$\bT$ be an $F$-stable maximal torus of~$\bG$ dual to~$\bT^*$.
We then call~$\bB$ \textit{regular} with respect to~$T$; if, in addition,
$C_{G^*}( s ) = T^*$, we call~$\bB$ \textit{strictly regular}
with respect to~$T$.
}\hfill{$\Box$}
\end{defn}
Notice that if centralizers of semisimple elements in~$\bG^*$ are connected,
the notions of regular and strictly regular blocks coincide. Since, in the 
notation of Definition~\ref{RegularBlocksDefinition}(b), the torus~$\bT^*$ is 
uniquely determined by~$s$, the torus~$T$ is determined by~$s$ up to conjugation 
in~$G$. The reason for the attribute ``with respect to~$T$'' in the above 
definition will become clear in Lemma~\ref{GeneralizedRegularBlocks} and 
Corollary~\ref{StrictlyRegularBlocksCor} below.

Before we prove these results, we investigate regular elements under regular 
embeddings. 
Let $i : \bG \rightarrow \tilde{\bG}$ denote a regular embedding. Then there
is a reductive group $\tilde{\bG}^*$, dual to~$\bG^*$, an epimorphism
$i^* : \tilde{\bG}^* \rightarrow \bG^*$, and a compatible Steinberg morphism 
of~$\tilde{\bG}^*$, also denoted by~$F$; see, e.g.\ \cite[2.B, 2.D]{CeBo2}.
Recall that if~$\bT$ is an $F$-stable maximal torus of~$\bG$ and $\theta \in
\Irr(T)$, then~$\theta$ is\textit{ in general position}, if and only if the 
stabilizer~$N_G( \bT, \theta )$ of~$\theta$ in $N_G( \bT )$ equals~$T$. This is 
the case if and only if 
$\varepsilon_{\bG}\varepsilon_{\bT}R_{\bT}^{\bG}( \theta )$ is an irreducible 
character of~$G$; \cite[Corollary~$2.2.9$(a)]{GeMa}.

\begin{lem}
\label{RegularElementsAndRegularEmbeddings}
Let $\bT^*$ be an $F$-stable maximal torus of~$\bG^*$, and let $s \in T^*$ be a 
regular element, so that $C^{\circ}_{\bG^*}( s ) = \bT^*$. Choose an element 
$\tilde{s} \in \tilde{G}^*$ with $i^*( \tilde{s} ) = s$. Then $\tilde{s}$ is 
regular, with $C_{\tilde{\bG}^*}( \tilde{s} ) = \tilde{\bT}^*$,
where $\tilde{\bT}^* := {(i^*)}^{-1}( \bT^* )$.

Let~$\bT$ be an $F$-stable maximal torus of~$\bG$ and let $\theta \in \Irr(T)$ 
be such that the pairs $(\bT^*,s)$ and $(\bT, \theta)$ are in duality. Put 
$\tilde{\bT} := \bT Z( \tilde{\bG} )$. Then there is an extension
$\tilde{\theta}$ of~$\theta$ to $\tilde{T}$ such that 
$(\tilde{\bT}^*, \tilde{s} )$ and $(\tilde{\bT}, \tilde{\theta})$ are in 
duality.

Put $\chi :=
\varepsilon_{\bG}\varepsilon_{\bT}R_{\bT}^{\bG}( \theta )$ and
$\tilde{\chi} := 
\varepsilon_{\tilde{\bG}}\varepsilon_{\tilde{\bT}}
R_{\tilde{\bT}}^{\tilde{\bG}}( \tilde{\theta} )$.

We have $\mathcal{E}( \tilde{G}, \tilde{s} ) = \{ \tilde{ \chi } \}$ 
and
$$\chi = \Res^{\tilde{G}}_{G}( \tilde{ \chi } ) = \sum_{i=1}^{m} \chi_i$$
with pairwise distinct irreducible characters $\chi_i$, $i = 1, \ldots , m$ 
of~$G$. Moreover, $m = |A_{\bG^*}( s )^F|$ and
$\mathcal{E}( G, s ) = \{ \chi_1, \ldots , \chi_m \}$.

Finally,~$\theta$ is in general position, if and only if~$s$ is strictly 
regular.
\end{lem}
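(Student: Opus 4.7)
The plan is to deduce each assertion from the interaction of regular embeddings with duality and Deligne-Lusztig theory, proceeding in the order stated.

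First I would handle the regularity of $\tilde{s}$. Since $\tilde{\bG}$ has connected centre, Subsection~\ref{ComponentGroups} tells us that $A_{\tilde{\bG}^*}(\tilde{s}) = 1$, so $C_{\tilde{\bG}^*}(\tilde{s})$ is connected. As $\ker(i^*) \leq Z(\tilde{\bG}^*) \leq \tilde{\bT}^*$, the image $i^*(C_{\tilde{\bG}^*}(\tilde{s}))$ lies inside $C_{\bG^*}(s)$, and being connected, even inside $C^{\circ}_{\bG^*}(s) = \bT^*$. Combined with $\tilde{\bT}^* \subseteq C_{\tilde{\bG}^*}(\tilde{s})$ (which follows from $\tilde{s} \in \tilde{\bT}^*$), this yields $C_{\tilde{\bG}^*}(\tilde{s}) = \tilde{\bT}^*$. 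The extension $\tilde{\theta}$ is then produced via the functoriality of the duality bijection \cite[Proposition~$11.1.6$]{DiMi2}: the inclusion $\bT \hookrightarrow \tilde{\bT}$ is dual to the restriction of $i^*$ to $\tilde{\bT}^*$, so any preimage $\tilde{s}$ of $s$ corresponds to some extension $\tilde{\theta}$ of $\theta$ to $\tilde{T}$.

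For the singleton Lusztig series, $C_{\tilde{\bG}^*}(\tilde{s}) = \tilde{\bT}^*$ is a torus whose only unipotent character is trivial, so Jordan decomposition forces $|\mathcal{E}(\tilde{G}, \tilde{s})| = 1$. The normalised Deligne-Lusztig character $\tilde{\chi}$ has positive degree and lies in the $\mathbb{Z}$-span of $\mathcal{E}(\tilde{G}, \tilde{s})$, hence coincides with its unique element and in particular is irreducible. The identity $\chi = \Res^{\tilde{G}}_{G}(\tilde{\chi})$ then follows from the standard compatibility
$$\Res^{\tilde{G}}_{G} \circ R^{\tilde{\bG}}_{\tilde{\bT}} = R^{\bG}_{\bT} \circ \Res^{\tilde{T}}_{T}$$
applied to $\tilde{\theta}$, together with $\varepsilon_{\tilde{\bG}}\varepsilon_{\tilde{\bT}} = \varepsilon_{\bG}\varepsilon_{\bT}$, which holds since the central torus added by the regular embedding shifts $r_F(\tilde{\bG})$ and $r_F(\tilde{\bT})$ by the same amount. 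As $\tilde{G}/G$ is abelian, Clifford theory renders $\chi$ multiplicity-free, giving $\chi = \sum_{i=1}^{m} \chi_i$ with pairwise distinct $\chi_i \in \Irr(G)$; both the count $m = |A_{\bG^*}(s)^F|$ and the equality $\mathcal{E}(G, s) = \{\chi_1, \ldots, \chi_m\}$ are the defining features of Lusztig series under a regular embedding.

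Finally, $\theta$ is in general position iff $\chi$ is irreducible, iff $m = 1$, iff $A_{\bG^*}(s)^F = 1$; and since $\bT^* = C^{\circ}_{\bG^*}(s)$ is connected, the Lang-Steinberg theorem identifies $A_{\bG^*}(s)^F$ with $C_{G^*}(s)/T^*$, which vanishes precisely when $C_{G^*}(s) = T^*$, i.e.\ when $s$ is strictly regular. I expect the main obstacle to be the careful tracking of the sign compatibility $\varepsilon_{\tilde{\bG}}\varepsilon_{\tilde{\bT}} = \varepsilon_{\bG}\varepsilon_{\bT}$ and isolating a clean reference for the Clifford-theoretic count $m = |A_{\bG^*}(s)^F|$.
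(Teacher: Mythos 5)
Your overall architecture matches the paper's: connected centre of $\tilde{\bG}$ to get regularity and connectedness of $C_{\tilde{\bG}^*}(\tilde{s})$, Jordan decomposition to get the singleton Lusztig series, the restriction formula for Deligne--Lusztig characters, and then a count of constituents of $\Res^{\tilde{G}}_G(\tilde{\chi})$. However, there is one genuine gap.

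The claim ``As $\tilde{G}/G$ is abelian, Clifford theory renders $\chi$ multiplicity-free'' is false as a general statement. Abelianness of the quotient does not force multiplicity-free restriction: for instance, restricting the $p$-dimensional irreducible of an extraspecial group of order $p^3$ to its centre gives a character with multiplicity $p$, even though the quotient is elementary abelian and the derived subgroup lies inside the normal subgroup. In the regular-embedding setting the multiplicity-freeness of $\Res^{\tilde{G}}_G(\tilde{\chi})$ is a nontrivial theorem of Lusztig (this is precisely what the paper invokes from \cite[Section~$10$]{Lu}); it is not a consequence of elementary Clifford theory. You need to cite that theorem explicitly, because this is exactly where the proof draws on deep structure of finite reductive groups. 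Lusztig's theorem also supplies the bijection between the distinct constituents $\chi_i$ and the unipotent characters of $C_{G^*}(s)$, which the paper then combines with the observations that $C^{\circ}_{\bG^*}(s)$ is a torus and $A_{\bG^*}(s)^F$ is abelian to get the count $m = |A_{\bG^*}(s)^F|$; you correctly flag that you lack a clean justification for this count, and indeed simply asserting it as a ``defining feature'' does not constitute a proof. The remaining pieces of your argument---the chain for regularity of $\tilde{s}$, the existence of $\tilde{\theta}$ via duality, the sign bookkeeping $\varepsilon_{\tilde{\bG}}\varepsilon_{\tilde{\bT}} = \varepsilon_{\bG}\varepsilon_{\bT}$, and the final equivalence between $\theta$ in general position and $s$ strictly regular---are sound, modulo the minor point that to conclude $\tilde\chi$ equals (rather than is a positive multiple of) the unique element of $\mathcal{E}(\tilde{G},\tilde{s})$ you should note that $\tilde\theta$ is in general position so $R^{\tilde{\bG}}_{\tilde{\bT}}(\tilde\theta)$ has norm one, or else compare degrees.
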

\begin{proof}
As $Z( \tilde{\bG} )$ is connected, $C_{\tilde{\bG}^*}( \tilde{s} )$ is 
connected by a theorem of Steinberg; see \cite[Theorem~$4.5.9$]{C2}.
As $i^*( C_{\tilde{\bG}^*}( \tilde{s} ) ) = C^{\circ}_{\bG^*}( s ) = \bT^*$ 
(see, e.g.\ \cite[p.~$36$]{CeBo2}), we obtain $C_{\tilde{\bG}^*}( \tilde{s} ) 
= {(i^*)}^{-1}( \bT^* ) = \tilde{\bT}^*$.

For the second statement see \cite[Lemme~$9.3$(b)]{CeBo2}. By Lusztig's Jordan
decomposition of characters we get $\mathcal{E}( \tilde{G}, \tilde{s} ) = 
\{ \tilde{\chi} \}$. Now $\chi = \Res^{\tilde{G}}_G( \tilde{\chi} )$ by
\cite[Proposition~$10.10$]{CeBo2}. By~\cite[Section~$10$]{Lu}, the
restriction $\Res^{\tilde{G}}_G( \tilde{\chi} )$ is multiplicity free, and its
constituents are in bijection with the unipotent characters of $C_{G^*}( s )$.
As $A_{\bG^*}( s )^F$ is abelian and $C^{\circ}_{\bG^*}( s )$ is a torus, the 
number of unipotent characters of $C_{G^*}( s )$ equals $|A_{\bG^*}( s )^F|$. 
The statement about $\mathcal{E}( G, s )$ follows from 
\cite[Proposition~$11.7$]{CeBo2}.  The last statement follows from the previous 
ones.
\end{proof}
We next investigate regular blocks and their defect groups.

\begin{lem}
\label{GeneralizedRegularBlocks}
Assume the notation and hypotheses of 
{\rm Lemma~\ref{RegularElementsAndRegularEmbeddings}}. Assume in addition 
that~$s$ is a $p'$-element, and choose an inverse image~$\tilde{s}$ 
under~$i^*$ as a $p'$-element as well. 

Then $p \nmid m$ and $N_G( \bT, \theta )/T \cong A_{\bG^*}( s )^F$.

Let $\tilde{\bB}$ be the $p$-block of~$\tilde{G}$ containing~$\tilde{\chi}$,
and let~$\bB$ be a block of~$G$ covered by~$\tilde{\bB}$. Then $\Irr( \bB )
\subseteq \mathcal{E}_p( G, s )$, the Sylow 
$p$-subgroup $\tilde{D}$ of $\tilde{T}$ is a defect group of~$\tilde{\bB}$, and 
$D := \tilde{D} \cap G$ is a defect group of~$\bB$. Also, $\Irr( \bB ) \cap
\mathcal{E}( G, s )$ is a non-empty subset of 
$\{ \chi_i \mid 1 \leq i \leq m \}$. 

Suppose that $t \in D$ is a $p$-element such that $C_{\bG}( t )$ is a regular
subgroup of~$\bG$.
Then $\chi_i( t ) = \chi_j( t )$ for all $1 \leq i, j \leq m$. In particular,
$$\chi_i(1) = \frac{1}{m}[G^*\colon\!T^*]_{r'}$$
for all $1 \leq i \leq m$.
\end{lem}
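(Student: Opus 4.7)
The plan is to address the four groups of claims in the order they appear. First, for $p \nmid m$: since $s$ is a $p'$-element, every prime divisor of $|A_{\bG^*}(s)|$ divides $|s|$ by~\ref{ComponentGroups}, so $p \nmid m$. For the isomorphism $N_G(\bT, \theta)/T \cong A_{\bG^*}(s)^F$, the standard duality of \cite[Proposition~$11.1.6$]{DiMi2} identifies $N_G(\bT, \theta)/T$ with the stabilizer of $(\bT^*, s)$ in $N_{G^*}(\bT^*)/T^*$; since $C^{\circ}_{\bG^*}(s) = \bT^*$, any element centralizing~$s$ automatically normalizes~$\bT^*$, so this stabilizer equals $C_{G^*}(s)$, and the Lang--Steinberg theorem gives $C_{G^*}(s)/T^* = A_{\bG^*}(s)^F$.

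For the block-theoretic assertions, I would first treat~$\tilde{\bB}$. By Lemma~\ref{RegularElementsAndRegularEmbeddings}, $\tilde{s}$ is strictly regular with $C_{\tilde{\bG}^*}(\tilde{s}) = \tilde{\bT}^*$, so Brou{\'e}'s theorem \cite[Th{\'e}or{\`e}me~$3.1$]{brouMo} on regular blocks applies to~$\tilde{\bB}$ and forces $\tilde{D}$, the Sylow $p$-subgroup of~$\tilde{T}$, to be a defect group. To pass to~$\bB$, I would verify $\tilde{D} \cap G = D$: decomposing $\tilde{T} = T \cdot Z(\tilde{\bG})^F$, any $tz \in \tilde{T} \cap G$ with $t \in T$ and $z \in Z(\tilde{\bG})^F$ satisfies $z \in Z(\tilde{\bG})^F \cap G \subseteq Z(\bG) \subseteq \bT$ (using that the center of a connected reductive group lies in every maximal torus), so $z \in T$ and hence $\tilde{T} \cap G = T$; then $\tilde{D} \cap G$ is a $p$-subgroup of~$T$ containing~$D$, and must equal~$D$. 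The standard Clifford-type relationship between defect groups of covered blocks then yields that~$D$ is a defect group of~$\bB$. The inclusion $\Irr(\bB) \subseteq \mathcal{E}_p(G, s)$ follows from $\Irr(\tilde{\bB}) \subseteq \mathcal{E}_p(\tilde{G}, \tilde{s})$ (see \cite[Th{\'e}or{\`e}me~$2.2$]{BrouMi}) combined with the compatibility of restriction with Lusztig series, applied to the relation $s = i^*(\tilde{s})$. Finally, by Clifford theory the $\chi_i$ are $\tilde{G}$-conjugate constituents of $\Res^{\tilde{G}}_G(\tilde{\chi})$, while $\tilde{G}$ acts transitively on the set of blocks of~$G$ covered by~$\tilde{\bB}$; hence~$\bB$, being one such covered block, meets $\{\chi_1, \ldots, \chi_m\}$.

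For the last assertion, the regularity of~$C_{\bG}(t)$ lets me invoke Lemma~\ref{RegularEmbeddingsAndConjugacyClasses}(b) to conclude that the $G$-conjugacy class of~$t$ coincides with its $\tilde{G}$-conjugacy class. Since the $\chi_i$ are $\tilde{G}$-conjugate, they take a common value at~$t$. The degree formula then follows by specialization to $t = 1$: we have $m \chi_i(1) = \chi(1) = \tilde{\chi}(1) = [\tilde{G}:\tilde{T}]_{r'}$, and $[\tilde{G}:\tilde{T}] = [G:T] = [G^*:T^*]$ by Lemma~\ref{RegularEmbeddingsAndConjugacyClasses}(a) together with $|G^*| = |G|$ and $|T^*| = |T|$. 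I expect the main obstacle to be the careful bookkeeping around the regular embedding: specifically, pinning the defect group of~$\bB$ down as exactly~$D$ (and not merely a conjugate of some subgroup of $\tilde{D} \cap G$) and marshalling the correct defect-group statement for covered blocks in this setting where $[\tilde{G}:G]$ need not be coprime to~$p$.
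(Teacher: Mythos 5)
Your proof is correct and reaches all the required conclusions, but it routes around the paper's two key block-theoretic citations in a genuinely different way. For the isomorphism $N_G(\bT,\theta)/T \cong A_{\bG^*}(s)^F$, you argue directly from the duality of \cite[Proposition~$11.1.6$]{DiMi2} and the observation that any element of $C_{G^*}(s)$ normalizes $C^\circ_{\bG^*}(s)=\bT^*$; the paper instead sets up the idempotents $\mathbf{e}_s^G$ and $\mathbf{e}_s^T$ and invokes \cite[$(7.1)$]{BoDaRo}. For the defect group of~$\bB$, you appeal to the general Kn\"orr-type theorem on defect groups of blocks covered over a normal subgroup; the paper first deduces the defect group for the blocks of $N_G(\bT,\theta)$ over $\mathbf{e}_s^T\mathcal{O}T$ (using $p\nmid[N_G(\bT,\theta)\colon T]$) and then transfers to~$G$ via \cite[Theorem~$7.7$]{BoDaRo}. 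And for the non-emptiness of $\Irr(\bB)\cap\mathcal{E}(G,s)$, you use that $\tilde G$ acts transitively both on the covered blocks and on the constituents $\chi_i$ of $\Res^{\tilde G}_G(\tilde\chi)$; the paper cites \cite[Theorem~$3.1$]{gelfgraev}. Your route is more elementary and self-contained, while the paper's is shorter because it can quote the specialised Bonnaf\'e--Dat--Rouquier machinery that it also needs elsewhere. One small caveat: the worry you flag at the end about pinning $D$ down exactly is a non-issue, since defect groups are determined only up to conjugacy and the Kn\"orr theorem already delivers a defect group of the form $\tilde D'\cap G$ with $\tilde D'$ a $\tilde G$-conjugate of $\tilde D$; also the aside verifying $\tilde T\cap G=T$ is not needed for the lemma as stated (it becomes relevant in Corollary~\ref{StrictlyRegularBlocksCor}), and the phrase ``specialization to $t=1$'' for the degree formula is better replaced by the plain observation that $\tilde G$-conjugate characters have equal degrees.
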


\begin{proof}
As~$s$ has $p'$-order, $p \nmid |A_{\bG^*}( s )^F|$; see \ref{ComponentGroups}.
Clearly, $\Irr( \bB ) \subseteq \mathcal{E}_p( G, s )$, since the irreducible 
constituents of~$\chi$ lie in $\mathcal{E}( G, s )$ by 
Lemma \ref{RegularElementsAndRegularEmbeddings},

Let $\mathbf{e}_s^G$ and $\mathbf{e}_s^T$ denote the central idempotents 
of $\mathcal{O}G$, respectively $\mathcal{O}T$, corresponding to
$\mathcal{E}_p( G, s )$, respectively $\mathcal{E}_p( T, s )$. Notice
that $\theta$ is the unique element in $\mathcal{E}( T, s )$, and that
$\mathcal{E}_p( T, s )$ is a single block of $kT$. The assertion
$N_G( \bT, \theta )/T \cong A_{\bG^*}( s )^F$ thus follows from 
\cite[$(7.1)$]{BoDaRo}.

The fact that~$\tilde{D}$ is a defect group of~$\tilde{\bB}$ is contained 
in~\cite[Th{\'e}o\-r{\`e}\-me~$3.1$]{brouMo}.
As $p \nmid [N_G( \bT, \theta )\colon\!T]$, the blocks of~$N_G( \bT, \theta )$
covering $\mathbf{e}_s^T\mathcal{O}T$ have the Sylow $p$-subgroup 
$\tilde{D} \cap G$ of~$T$ as 
defect group. The corresponding assertion for $\mathbf{e}_s^G\mathcal{O}G$ 
follows from \cite[Theorem~$7.7$]{BoDaRo}.

Recall that $\mathcal{E}( G, s ) = \{ \chi_i \mid 1 \leq i \leq m \}$ by 
Lemma~\ref{RegularElementsAndRegularEmbeddings}.
Since $\Irr( \bB ) \subseteq \mathcal{E}_p( G, s )$ and~$s$ is a $p'$-element,
we have $\Irr( \bB ) \cap \mathcal{E}( G, s ) \neq \emptyset$ by 
\cite[Theorem~$3.1$]{gelfgraev}.

As~$C_{\bG}( t )$ is a 
regular subgroup of~$\bG$ by hypothesis, the $G$-conjugacy class of~$t$
equals the $\tilde{G}$-conjugacy class of~$t$; see 
Lemma~\ref{RegularEmbeddingsAndConjugacyClasses}(b). This implies the claim
on the values of $\chi_i(t)$, since the $\chi_i$, $1 \leq i \leq m$, are
conjugate under the action of~$\tilde{G}$. The final claim follows by choosing
$t = 1$.
\end{proof}

By Lemma~\ref{RegularElementsAndRegularEmbeddings}, every regular block arises
as one of the blocks~$\bB$ considered in Lemma~\ref{GeneralizedRegularBlocks}.
Regular blocks have first been introduced and investigated by Brou{\'e}
in~\cite{brouMo}, under the more restrictive condition that $C_{\bG^*}( s )$, in
the notation of Lemma~\ref{GeneralizedRegularBlocks}, is a torus. Under this 
condition we have $N_G( \bT, \theta ) = T$; in other words, 
$\theta \in \Irr( T )$ is in general position. Moreover, $m = 1$. As our 
applications are mostly concerned with the latter case, we formulate the
relevant results in a corollary. In the following, we will identify the
elements of $\Irr(T)$ of $p$-power order with their restrictions to
the Sylow $p$-subgroup~$D$ of~$T$.

\begin{cor}
\label{StrictlyRegularBlocksCor}
Let the hypotheses and notation be as in 
{\rm Lemma \ref{GeneralizedRegularBlocks}.} Assume in addition that $m = 1$,
i.e.\ $A_{\bG^*}( s )^F = 1$, so that~$\bB$ is strictly regular. 

Then $\theta \in \Irr(T)$ is in general position and $\chi \in \Irr(G)$, so that 
$\mathcal{E}( G, s ) = \{ \chi \}$. The Sylow $p$-subgroup~$D$ of~$T$ is a 
defect group of~$\bB$ and~$\bB$ is the unique block of~$G$ covered 
by~$\tilde{\bB}$.

Also, $\Irr( \bB ) = \{ 
\varepsilon_{\mathbf{G}}\varepsilon_{\mathbf{T}}
R_{\mathbf{T}}^{\mathbf{G}}( \lambda\theta ) \mid \lambda \in \Irr(D) \}$,
and $|\Irr( \bB )| = |D|$. In particular~$\bB$ has a unique irreducible Brauer 
character. If~$D$ is cyclic,~$\chi$ is the non-exceptional character of~$\bB$.
\end{cor}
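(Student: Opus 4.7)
The plan is to derive each of the five assertions from the preceding lemmas, supplemented by Brou\'e's work on regular blocks~\cite{brouMo}. Since by hypothesis $m = 1$, or equivalently $A_{\bG^*}(s)^F = 1$, Lemma~\ref{RegularElementsAndRegularEmbeddings} gives at once that $\theta$ is in general position, $\chi \in \Irr(G)$, and $\mathcal{E}(G,s) = \{\chi\}$. Lemma~\ref{GeneralizedRegularBlocks} then yields $\chi \in \Irr(\bB)$ and that $D = \tilde{D} \cap G$, identified in its proof as the Sylow $p$-subgroup of~$T$, is a defect group of~$\bB$. For the uniqueness of~$\bB$ among the blocks of~$G$ covered by~$\tilde{\bB}$, I would observe that $\chi = \Res^{\tilde{G}}_G(\tilde{\chi})$ is $\tilde{G}$-invariant, being the unique element of $\mathcal{E}(G,s)$; hence $\bB$ is $\tilde{G}$-stable, and since $\tilde{G}$ acts transitively on the blocks of~$G$ covered by~$\tilde{\bB}$, it is uniquely determined.

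For the enumeration $\Irr(\bB) = \{\varepsilon_{\bG}\varepsilon_{\bT}R_{\bT}^{\bG}(\lambda\theta) \mid \lambda \in \Irr(D)\}$, I would invoke Brou\'e's Th\'eor\`eme~$3.1$ in~\cite{brouMo}, which supplies exactly this description for strictly regular blocks, together with the identification (announced just before the statement) of elements of $\Irr(T)$ of $p$-power order with their restrictions to~$D$. This immediately yields $|\Irr(\bB)| = |D|$.

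The unique irreducible Brauer character should then drop out as follows. For any $\lambda \in \Irr(D)$ and any $p$-regular $g \in G$, the character formula \cite[Proposition~$7.5.3$]{C2} expresses $R_{\bT}^{\bG}(\lambda\theta)(g)$ as a sum whose nonzero contributions involve $(\lambda\theta)(s')$ for $\bG$-conjugates $s' \in \bT$ of the semisimple part of~$g$. As these $s'$ are $p$-regular and $\lambda$ has $p$-power order, $\lambda(s') = 1$, so the values of $R_{\bT}^{\bG}(\lambda\theta)$ on $p$-regular classes are independent of~$\lambda$. Hence the $|D|$ ordinary characters of~$\bB$ share a common restriction to the $p$-regular classes, forcing the decomposition matrix of~$\bB$ to have rank~$1$, that is, $l(\bB) = 1$.

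Finally, assuming $D$ is cyclic, I would identify $\chi$ as the non-exceptional character. By Lemma~\ref{pRationalCharacters}(a), $\chi$ is $p$-rational since $\theta$ has $p'$-order. For $\lambda \neq 1$, the Galois group $\mathcal{G}$ acts faithfully on $\Irr(D)$ and fixes $\theta$, so by the character formula it moves $R_{\bT}^{\bG}(\lambda\theta)$, which is then not $p$-rational. Thus $\chi$ is the unique $p$-rational member of $\Irr(\bB)$, and the same argument used in the proof of Lemma~\ref{IdentifyingTheNonExceptional}, via \cite[Theorem~$68.1$(8)]{DornhoffB} and \cite[Lemma~IV.$6.10$]{Feit}, now applied to our cyclic block with $l(\bB) = 1$, shows that this unique $p$-rational character is precisely the non-exceptional one. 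The main obstacle I anticipate is pinpointing Brou\'e's Th\'eor\`eme~$3.1$ in the form needed for the full enumeration of $\Irr(\bB)$; the remaining steps then reduce to the Deligne--Lusztig character formula and standard facts about cyclic blocks.
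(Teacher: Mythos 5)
Your proposal is correct and reaches all of the corollary's conclusions, but it follows a genuinely different route on the central point — the enumeration of $\Irr(\bB)$. You invoke Brou\'e's Th\'eor\`eme~$3.1$ directly to obtain $\Irr(\bB) = \{\varepsilon_{\bG}\varepsilon_{\bT}R_{\bT}^{\bG}(\lambda\theta) \mid \lambda \in \Irr(D)\}$, which is reasonable since the corollary's hypothesis $m=1$ is exactly Brou\'e's original setting of a torus centralizer. The paper, however, deliberately re-derives this: it observes that $\lambda\theta$ is in general position for every $\lambda \in \Irr(D)$, that restriction to $p$-regular elements commutes with Lusztig induction (so all $|D|$ resulting irreducible characters share a $p$-regular restriction and hence lie in $\Irr(\bB)$), and then separately counts $|\mathcal{E}_p(G,s)| = |D|$ from $C_{\bG^*}(ts) = \bT^*$ for all $p$-elements $t \in T^*$; since $\Irr(\bB) \subseteq \mathcal{E}_p(G,s)$, equality follows. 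This buys self-containment and avoids leaning on the precise shape of Brou\'e's isometry statement (the whole point of the surrounding lemmas being to rebuild that machinery in a slightly more general framework). Your shared-$p$-regular-restriction argument for $l(\bB)=1$ agrees with the paper's implicit reasoning, and your $\tilde{G}$-invariance argument for the uniqueness of $\bB$ under $\tilde{\bB}$ is a clean alternative to extracting it from Lemma~\ref{GeneralizedRegularBlocks} (every block covered by $\tilde{\bB}$ contains some $\chi_i$, and $m=1$). For the non-exceptional character, the paper uses Lemma~\ref{pRationalCharacters}(b) together with Lemma~\ref{IdentifyingTheNonExceptional}, whereas you verify by hand that the characters $R_{\bT}^{\bG}(\lambda\theta)$ for $\lambda\neq 1$ are not $p$-rational before appealing to the argument inside Lemma~\ref{IdentifyingTheNonExceptional} — extra work, but you are right that what that proof really needs is $l(\bB)=1$, not the literal hypothesis $D_1 \leq Z(G)$.
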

\begin{proof}
The statements, except those in the last paragraph, immediately follow from 
Lemma~\ref{GeneralizedRegularBlocks}. Let us prove the final statements.

Clearly, $\lambda\theta$ is in general position for every $\lambda \in \Irr(D)$. 
We conclude that
$|\{ 
\varepsilon_{\mathbf{G}}\varepsilon_{\mathbf{T}}
R_{\mathbf{T}}^{\mathbf{G}}( \lambda\theta ) \mid \lambda \in \Irr(T) \}| 
= |D|$. It follows from \cite[Corollary~$7.3.5$]{DiMi2}, that restriction of 
class functions to the $p$-regular 
elements commutes with the Lusztig map. Hence the elements of the above set all 
have the same restriction to the $p$-regular elements, and all of them lie in 
$\Irr( \bB )$. Moreover,~$\chi$ is $p$-rational by 
Lemma~\ref{pRationalCharacters}(b).

In the notation of Lemma~\ref{RegularElementsAndRegularEmbeddings},
we have $C_{\bG^*}( ts ) = \bT^*$ for every $p$-element $t \in T^*$.
It follows that $|\mathcal{E}_p( G, s )| = |D|$. If~$D$ is cyclic,~$\chi$ is the
non-exceptional character of~$\bB$ by Lemma~\ref{IdentifyingTheNonExceptional}.
We have proved all our assertions.
\end{proof}

The last statement of Corollary~\ref{StrictlyRegularBlocksCor} can be 
generalized to the situation when~$\bB$ in 
Lemma~\ref{GeneralizedRegularBlocks} is cyclic and nilpotent. Under these 
hypotheses, the non-exceptional character in~$\bB$ is the unique character in  
$\Irr( \bB ) \cap \mathcal{E}( G, s )$. In view of 
Lemma~\ref{IdentifyingTheNonExceptional}, this is a consequence of the 
following, slightly more general, result. Due to its hypothesis, we cannot use
Lemma~\ref{pRationalCharacters}(b) right away.

\begin{lem}
\label{StrictlyRegularBlocksCorCyclic}
Keep the hypotheses and notation of 
{\rm Lemma \ref{GeneralizedRegularBlocks}}. Suppose that~$\Irr( \bB )$ contains
a unique $p$-rational character~$\psi_1$, and that~$\psi_1^{\circ}$ is the 
unique irreducible Brauer character of~$\bB$. (Here,~$\psi_1^{\circ}$ denotes 
the restriction of~$\psi_1$ to the $p$-regular elements of~$G$.)

Then the elements $\chi_1, \ldots , \chi_m$ of $\mathcal{E}( G, s )$ lie in~$m$ 
distinct $p$-blocks of $G$, and $\chi_i$ is the unique $p$-rational character 
in its block for $1 \leq i \leq m$.
\end{lem}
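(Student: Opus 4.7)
My plan is to reduce both claims to the single observation that each $\chi_i$ is itself $p$-rational. Once this is known, the uniqueness hypothesis on~$\bB$ does the rest.

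First, I would verify that each $\chi_i$ is $p$-rational. By Lemma~\ref{RegularElementsAndRegularEmbeddings} we have $\{\chi_1, \ldots, \chi_m\} = \mathcal{E}(G, s)$, and Lusztig's generalized Jordan decomposition of characters places these in bijection with the unipotent characters of~$C_{G^*}(s)$. Since $C^{\circ}_{\bG^*}(s) = \bT^*$ is a torus, the unipotent characters of $C_{G^*}(s)$ identify with the linear characters of the component group $A_{G^*}(s)^F$, pulled back from the quotient $C_{G^*}(s)/T^* \cong A_{G^*}(s)^F$. As~$s$ is a $p'$-element, the remarks in Subsection~\ref{ComponentGroups} imply that $|A_{\bG^*}(s)^F|$ is coprime to~$p$; hence these linear characters take values in roots of unity of $p'$-order and are fixed by~$\mathcal{G}$. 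The $\mathcal{G}$-equivariance of the Jordan correspondence then yields that each $\chi_i$ is $p$-rational.

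Next, each $\chi_i$ lies in some block of~$G$ covered by~$\tilde{\bB}$, since $\chi_i$ is a constituent of $\Res^{\tilde{G}}_G(\tilde{\chi})$ and $\tilde{\chi} \in \tilde{\bB}$. By hypothesis, $\bB$ contains a unique $p$-rational character. Conjugation by~$\tilde{G}$ permutes these blocks transitively and commutes with the Galois action~$\mathcal{G}$; hence every block of~$G$ covered by~$\tilde{\bB}$ shares this property and contains at most one $p$-rational character.

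Combining the two observations, no two of the pairwise distinct $p$-rational characters $\chi_1, \ldots, \chi_m$ may inhabit a common block. They therefore occupy $m$ pairwise distinct blocks of~$G$, and in each of these blocks the respective $\chi_i$ is the unique $p$-rational character.

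The main obstacle is the first step: making precise the $\mathcal{G}$-equivariance of Jordan decomposition for the disconnected centralizer~$C_{G^*}(s)$, so that $p$-rationality transfers from the unipotent parameters to the $\chi_i$'s. This should follow from a careful reading of Lusztig's construction and its compatibility with the natural Galois action on both the semisimple classes and the unipotent characters.
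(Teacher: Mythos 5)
Your overall logical skeleton is sound: if each $\chi_i$ is $p$-rational and each covered block contains at most one $p$-rational character, both conclusions follow. Your second step is correct and is essentially the same $\tilde{G}$-conjugation argument the paper uses. But your first step---proving directly that each $\chi_i$ is $p$-rational---contains a genuine gap, one that you yourself flag at the end as ``the main obstacle.'' The $\mathcal{G}$-equivariance of Lusztig's (generalized) Jordan decomposition is not automatic, particularly in the disconnected-centralizer setting. The decomposition in \cite[Section~10]{Lu} is a bijection whose construction involves choices (extensions of unipotent characters of $C^{\circ}_{G^*}(s)$ to $C_{G^*}(s)$, among others), and those choices need not be compatible with the Galois action. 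The paper itself signals this danger just after Lemma~\ref{pRationalCharacters}: $G_2(q)$ has unipotent characters that fail to be $3$-rational, even though the parameters on the other side of Jordan decomposition are trivially fixed. The fact that the unipotent characters of $C_{G^*}(s)$ here are linear of $p'$-order tells you they are $\mathcal{G}$-fixed, but does not transfer that fixedness across a bijection whose Galois compatibility is unproven. A tell-tale sign of the gap is that your argument never invokes the hypothesis that $\psi_1^{\circ}$ is the unique irreducible Brauer character of~$\bB$, yet that hypothesis is essential.

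The paper avoids this obstacle entirely by \emph{first} establishing, via a degree-counting argument, that $\chi_1, \ldots, \chi_m$ land in $m$ distinct blocks, and only \emph{then} deducing their $p$-rationality. In outline: one takes $\tilde{\psi} \in \Irr(\tilde{\bB})$ lying over $\psi_1$, writes its restriction to $G$ as $\sum_{i=1}^l \psi_i$, and uses your block-uniqueness argument to see the $\psi_i$ lie in the $l$ distinct blocks covered by $\tilde{\bB}$. Since each covered block contains some $\chi_j$ (Lemma~\ref{GeneralizedRegularBlocks}), one gets $l \leq m$. The hypothesis that $\psi_1^{\circ}$ is the unique Brauer character of $\bB$ gives $\psi_1(1) \leq \chi_1(1)$; combined with the fact that every character of the strictly regular block $\tilde{\bB}$ has the same degree, this forces $\psi(1) = l\psi_1(1) \leq m\chi_1(1) = \chi(1) = \psi(1)$, hence $l = m$ and each covered block contains exactly one $\chi_j$. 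Only at this point does one appeal to the $p$-rationality of $\chi = \sum_j \chi_j$: since $\mathcal{G}$ permutes the constituents $\chi_j$ while preserving blocks, and the $\chi_j$ live in pairwise distinct blocks, $\mathcal{G}$ must fix each $\chi_j$. So the logical order is the reverse of yours: block separation is proved combinatorially, and $p$-rationality is a consequence rather than the starting point. If you want to salvage your approach, you would need an explicit reference or proof that the generalized Jordan decomposition can be chosen $\mathcal{G}$-equivariantly in this situation---a non-trivial fact that the paper deliberately sidesteps.
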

\begin{proof}
Recall that $\tilde{\bB}$ is a strictly regular block of~$\tilde{G}$ 
covering~$\bB$. By Corollary~\ref{StrictlyRegularBlocksCor}, applied with 
$G = \tilde{G}$ and $\bB = \tilde{\bB}$, all irreducible characters 
of~$\tilde{\bB}$ have the same degree. Let 
$\tilde{\psi} \in \Irr( \tilde{\bB} )$ such that 
$\psi := \Res_G^{\tilde{G}}( \tilde{\psi} )$ contains~$\psi_1$ as a 
constituent. Then $\psi = \sum_{i = 1}^l \psi_i$ with pairwise 
distinct irreducible characters $\psi_i$, $1 \leq i \leq l$; 
see~\cite[Section~$10$]{Lu}. For $1 \leq i \leq l$, let~$\bB_i$ denote the
block of~$G$ containing~$\psi_i$. Then the blocks $\bB = \bB_1, \ldots , \bB_l$
are pairwise distinct, since conjugation of characters preserves their 
$p$-rationality. Moreover, $\bB_1, \ldots , \bB_l$ are all the blocks of~$G$
covered by~$\tilde{\bB}$.

Choose the notation such that $\chi_1 \in \bB = \bB_1$. Then $\psi_1( 1 ) =
\psi_1^{\circ}(1) \leq \chi_1(1)$. Since every block covered by~$\tilde{\bB}$
contains at least one of the characters $\chi_1, \ldots , \chi_m$ by 
Lemma~\ref{GeneralizedRegularBlocks}, we have $l \leq m$. It follows that
$\psi(1) = l\psi_1(1) \leq m\chi_1(1) = \chi(1) = \psi(1)$, and so $l = m$.
Thus each of the blocks $\bB_1, \ldots , \bB_m$ contains exactly one of the
characters $\chi_1, \ldots , \chi_m$.

Since $\chi = \varepsilon_{\bG}\varepsilon_{\bT}R_{\bT}^{\bG}( \theta )$ is 
$p$-rational by Lemma~\ref{pRationalCharacters}(a), the Galois 
group~$\mathcal{G}$ permutes the constituents $\chi_1, \ldots , \chi_m$ 
of~$\chi$. As $\mathcal{G}$-conjugate characters lie in the same 
block,~$\mathcal{G}$ fixes~$\chi_1$, which then is $p$-rational. This completes 
the proof.
\end{proof}

\section{Reduction theorems for classical groups}

The aim in this section is to reduce the computation of the source algebra 
equivalence classes of cyclic blocks of classical groups for the primes excluded 
in \cite[Proposition~$6.5$(a)]{HL24} to a corresponding problem for the general 
linear and unitary groups. Throughout this section, we fix a prime power~$q$, an 
odd prime~$p$ not dividing~$q$, and an integer $n \geq 1$. An algebraic closure
of the finite field with~$q$ elements is denoted by~$\mathbb{F}$.

\subsection{The relevant classical groups}
\label{TheRelevantClassicalGroups}
It is convenient to study the classical groups via their natural 
representations. For the notions and concepts with respect to classical groups, 
see~\cite{Taylor}. For the notions regarding Spin groups, 
see~\cite{Jac80,Gro02}.

Let~$V$ be an $n$-dimensional vector space over the finite 
field~$\mathbb{F}_{q^\delta}$, where $\delta \in \{ 1, 2 \}$. We assume that~$V$
is equipped with a form~$\kappa$ of one of the following types: a non-degenerate
hermitian form, a non-degenerate symplectic form, a non-degenerate quadratic form 
or the zero-form. If~$\kappa$ is hermitian, we let $\delta = 2$; in all other 
cases, $\delta = 1$. If~$\kappa$ is a quadratic form and $\dim(V)$ is odd, we 
assume that~$q$ is odd. By $I( V, \kappa )$ we denote the full isometry group 
of $(V, \kappa )$, and by~$\Omega( V, \kappa )$ the commutator subgroup 
of~$I( V, \kappa )$. If~$\kappa$ is a quadratic form, we write 
$\Spin( V, \kappa )$ for the corresponding spin group. (Notice that 
$\Spin( V, \kappa ) = \Omega( V, \kappa )$ in case~$q$ is even.) If~$\kappa$ is
the zero form or hermitian, $I( V, \kappa ) \cong \GL_n( q )$, 
respectively~$\GU_n( q )$ and $\Omega( V, \kappa ) \cong \SL_n( q )$, 
respectively $\SU_n( q )$, unless $(n,q) = (2,2)$. If~$\kappa$ is a symplectic 
form, $I( V, \kappa ) = \Omega( V, \kappa ) \cong \Sp_n( q )$, unless $(n,q)$ is
one of $(2,2), (2,3), (4,2)$. If~$\kappa$ is a quadratic form, the 
index of $\Omega( V, \kappa )$ in $I( V, \kappa )$ is a power of~$2$; see
\cite[Section~$11$]{Taylor}. Finally, $G( V, \kappa )$ denotes one of the 
following groups:
\begin{equation}
\label{ClassicalGroups}
G( V, \kappa ) := \left\{
       \begin{array}{ll}
               \Spin( V, \kappa ), & \text{if $\kappa$ is a quadratic form}, \\
               I(V, \kappa ), & \text{otherwise}. 
       \end{array}
       \right.
\end{equation}
We also consider the homomorphism
$\nu : G( V, \kappa ) \rightarrow I( V, \kappa )$, defined as follows.
If $\kappa$ is a quadratic form, $\nu$ is the \textit{vector representation of} 
$\Spin( V, \kappa )$, whose image equals~$\Omega( V, \kappa )$. In the other 
cases,~$\nu$ is the identity map. Put $Z := \ker(\nu)$. Then~$Z$ is trivial 
unless $\kappa$ is a quadratic form and $q$ is odd, in which case~$Z$ has 
order~$2$. This notation is used until the end of this section.

\subsection{Minimal polynomials}
\label{MinimalPolynomials}

We also need some notation concerning polynomials. 
Let~$\Delta$ be a monic irreducible polynomial over~$\mathbb{F}_{q^\delta}$.
If $\delta = 1$, we write $\Delta^*$ for the monic, irreducible polynomial 
over~$\mathbb{F}_{q}$ with the property: $\zeta \in \mathbb{F}$ is a root 
of~$\Delta$, if and only if $\zeta^{-1}$ is a root of~$\Delta^*$. If 
$\delta = 2$, we write $\Delta^\dagger$ for the monic, irreducible polynomial 
over~$\mathbb{F}_{q^2}$ with the property: $\zeta \in \mathbb{F}$ is a root 
of~$\Delta$, if and only if $\zeta^{-q}$ is a root of~$\Delta^\dagger$. 

\begin{lem}
\label{DegreesOfPolynomials}
Let~$\Delta$ be a monic irreducible polynomial over~$\mathbb{F}_{q}$ of 
degree~$e$, and let $\zeta \in \mathbb{F}$ be a root of~$\Delta$ with 
$|\zeta| = p$.

Then~$e$ is the order of~$q$ modulo~$p$. If $e = 2d$ is even, then 
$p \mid q^d + 1$ and $\Delta = \Delta^*$. If $\Delta = \Delta^*$, then~$e$ 
is even. 
\end{lem}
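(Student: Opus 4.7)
The plan is to unwind the three assertions into statements about the multiplicative order of $q$ modulo $p$ and to use that the roots of $\Delta$ are the Galois conjugates $\zeta, \zeta^q, \zeta^{q^2}, \ldots, \zeta^{q^{e-1}}$ of $\zeta$ over $\mathbb{F}_q$. Throughout, I work in the cyclic subgroup of order $p$ generated by $\zeta$ in $\mathbb{F}^{\times}$.

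First I would establish the first claim. Since $\Delta$ is the minimal polynomial of $\zeta$ over $\mathbb{F}_q$, we have $\mathbb{F}_q(\zeta) = \mathbb{F}_{q^e}$, and this is the smallest subfield of $\mathbb{F}$ containing $\zeta$. Because $|\zeta| = p$, for any positive integer $m$ the condition $\zeta \in \mathbb{F}_{q^m}^{\times}$ is equivalent to $p \mid q^m - 1$. Hence $e$ is the smallest positive integer with $q^e \equiv 1 \pmod p$, i.e.\ the order of $q$ modulo $p$.

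Next I would tackle the second claim. Assume $e = 2d$. By minimality of $e$, $q^d \not\equiv 1 \pmod p$, while $q^{2d} \equiv 1 \pmod p$, which forces $q^d \equiv -1 \pmod p$, so $p \mid q^d + 1$. Consequently $\zeta^{q^d} = \zeta^{-1}$. Since $\zeta^{q^d}$ is a Galois conjugate of $\zeta$ over $\mathbb{F}_q$, it is a root of $\Delta$; therefore $\zeta^{-1}$ is a root of $\Delta$. Unwinding the definition of $\Delta^*$ given in Subsection~\ref{MinimalPolynomials}, this means that $\Delta$ itself is the minimal polynomial of $\zeta^{-1}$ over $\mathbb{F}_q$, so $\Delta = \Delta^*$.

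Finally, for the third (converse) claim, assume $\Delta = \Delta^*$, so $\zeta^{-1}$ is a root of $\Delta$ and therefore a Galois conjugate of $\zeta$. Thus $\zeta^{-1} = \zeta^{q^k}$ for some integer $k$ with $0 \leq k < e$, giving $q^k \equiv -1 \pmod p$. Since $p$ is odd, $k \geq 1$; squaring yields $q^{2k} \equiv 1 \pmod p$, whence $e \mid 2k$ by the first claim. If $e$ divided $k$ we would get $q^k \equiv 1 \pmod p$, contradicting $q^k \equiv -1 \pmod p$. Hence $e \nmid k$, and the divisibilities $e \mid 2k$ with $k < e$ force $2k = e$. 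Therefore $e$ is even.

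No step looks substantially harder than the others; the main thing to watch is keeping the definition of $\Delta^*$ straight (roots of $\Delta^*$ are the inverses of the roots of $\Delta$) so that the identification $\Delta = \Delta^*$ is correctly phrased as the statement ``$\zeta^{-1}$ is a root of $\Delta$''. Once this is clear, all three parts reduce to elementary manipulations in the cyclic group $\langle \zeta \rangle$ of order $p$.
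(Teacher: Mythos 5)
Your proof is correct and follows essentially the same route as the paper's: identify $e$ as the degree of the field extension generated by $\zeta$ and hence as the multiplicative order of $q$ modulo $p$, then read off the conclusions from the relation $\zeta^{q^d}=\zeta^{-1}$. In fact your write-up is slightly more complete than the paper's, since the paper asserts the final claim (that $\Delta=\Delta^*$ forces $e$ even) without giving the argument, whereas you supply it by observing $\zeta^{-1}=\zeta^{q^k}$ for some $0\le k<e$, noting $k\ge 1$ because $p$ is odd, and concluding $e\mid 2k$ with $e\nmid k$, hence $e=2k$.
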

\begin{proof}
As~$\Delta$ is irreducible, we have 
$[\mathbb{F}_q[\zeta]\colon\!\mathbb{F}_q] = e$. Hence $p \mid q^e - 1$ as 
$|\zeta| = p$. Suppose that $p \mid q^j - 1$ for some positive integer~$j$. Then 
$\mathbb{F}_{q^j}$ contains all elements of~$\mathbb{F}^*$ of order~$p$, and so
$\zeta \in \mathbb{F}_{q^j}$. It follows that $\mathbb{F}_{q^e} = 
\mathbb{F}_q[\zeta] \subseteq \mathbb{F}_{q^j}$, and hence $e \leq j$.

Suppose that $e = 2d$ is even. Then $p \mid (q^d -1 )(q^d + 1)$. As~$p$ is odd
and $2d$ is the order of $q$ modulo~$p$, we conclude that $p \mid q^d + 1$.
As $|\zeta| = p$, we get $\zeta^{q^d + 1} = 1$, and so 
$\zeta^{-1} = \zeta^{q^d}$. As $\zeta^{q^d}$ is a root of~$\Delta$, we get 
$\Delta = \Delta^*$. On the other hand, if $\Delta = \Delta^*$, then~$e$ is 
even. This concludes the proof.
\end{proof}

\begin{lem}
\label{UnitaryDelta}
Suppose that $p \mid q + 1$. 
Let $\Delta$ be a monic irreducible polynomial over~$\mathbb{F}_{q^2}$ of odd 
degree~$e$, and let $\zeta \in \mathbb{F}$ be a root of~$\Delta$ of $p$-power 
order. Then $\Delta = \Delta^{\dagger}$.
\end{lem}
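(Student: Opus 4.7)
The plan is to prove $\Delta = \Delta^{\dagger}$ by exhibiting $\zeta^{-q}$ as a $\Gal(\mathbb{F}_{q^{2e}}/\mathbb{F}_{q^2})$-conjugate of~$\zeta$. Once this is established, $\zeta^{-q}$ is a root of~$\Delta$, while by the defining property of $\Delta^{\dagger}$ it is also a root of~$\Delta^{\dagger}$; since both are monic irreducible over~$\mathbb{F}_{q^2}$ of the same degree~$e$, they must coincide.

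Let $a \geq 1$ be the integer with $|\zeta| = p^a$ (the hypothesis $\Delta \neq \Gamma_0$ rules out $a = 0$). The Galois group of $\mathbb{F}_{q^{2e}}/\mathbb{F}_{q^2}$ is cyclic of order~$e$, generated by the Frobenius $x \mapsto x^{q^2}$, so the conjugates of~$\zeta$ over~$\mathbb{F}_{q^2}$ are the elements $\zeta^{q^{2i}}$ with $i \in \mathbb{Z}$. I will seek~$i$ with $\zeta^{q^{2i}} = \zeta^{-q}$, which, using $|\zeta| = p^a$, amounts to the congruence
$$q^{2i - 1} \equiv -1 \pmod{p^a}.$$

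The main step is a computation of the multiplicative order of~$q$ modulo~$p^a$. Since $\zeta$ generates a degree-$e$ extension of $\mathbb{F}_{q^2}$, the order of $q^2$ modulo $p^a$ is exactly~$e$, so the order of~$q$ modulo~$p^a$ is either~$e$ or~$2e$. From $p \mid q + 1$ and~$p$ odd we get $q \equiv -1 \pmod p$, which has multiplicative order~$2$ modulo~$p$; hence the order of~$q$ modulo~$p^a$ must be even. Since~$e$ is odd, this forces the order of~$q$ modulo~$p^a$ to be~$2e$, and therefore $q^e \equiv -1 \pmod{p^a}$ as the unique involution in the cyclic group $\langle q \rangle$ of order~$2e$. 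Taking $i := (e+1)/2$ (an integer precisely because $e$ is odd) then yields $2i - 1 = e$ and the required congruence. The oddness of~$e$ plays a genuinely essential role twice here: it ensures $(e+1)/2$ is an integer, and it forces the order of~$q$ modulo~$p^a$ to be~$2e$ rather than~$e$, without which $-1$ could fail to lie in $\langle q \rangle$ modulo $p^a$ at all. This order computation is the only nontrivial ingredient; everything else is routine bookkeeping.
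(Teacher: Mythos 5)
Your proof is correct and follows essentially the same strategy as the paper: both establish the key congruence $q^e \equiv -1 \pmod{p^a}$ (equivalently $\zeta^{q^e} = \zeta^{-1}$) and then use the evenness of $e+1$ to exhibit $\zeta^{-q}$ as a $q^2$-power Frobenius conjugate of $\zeta$, hence a root of $\Delta$. The only difference is in how that congruence is reached: the paper factors $q^{2e}-1 = (q^e-1)(q^e+1)$ and uses oddness of $p$ to rule out $p^a \mid q^e-1$, while you go through the multiplicative order of $q$ modulo $p^a$ and the cyclicity of $(\mathbb{Z}/p^a\mathbb{Z})^*$ --- a harmless stylistic variation.
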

\begin{proof}
The claim is true if $\zeta = 1$. Thus assume that $\zeta \neq 1$.
By hypothesis, $q \equiv - 1\,\,(\mbox{\rm mod}\,\,p)$, and thus 
$q^e \equiv - 1\,\,(\mbox{\rm mod}\,\,p)$. Now $\zeta^{q^{2e} - 1} = 1$, and 
thus $\zeta^{q^e - 1}$ or $\zeta^{q^e + 1} = 1$, as~$p$ is odd. As 
$\zeta \neq 1$, the former case would imply $p \mid q^e - 1$, which is 
impossible. Hence $\zeta^{q^e} = \zeta^{-1}$.  It follows that 
$\zeta^{-q} = \zeta^{q^{e + 1}}$. As $e + 1$ is even, 
$\zeta^{q^{e + 1}}$ is a root of~$\Delta$, which proves our claim.
\end{proof}

The next lemma is used to investigate centralizers of possible defect groups.
\begin{lem}
\label{ShapeOfMinimalPolynomial}
Let $G := G( V, \kappa )$ with $G( V, \kappa )$ as 
in~{\rm (\ref{ClassicalGroups})}. Let $D = \langle t \rangle \leq G$ denote a
non-trivial cyclic radical $p$-subgroup of~$G$. Write $\bar{t} := \nu( t )$, and 
assume that~$\bar{t}$ has no
non-trivial fixed vector on~$V$. Let $\bar{t}_1$ denote a power of $\bar{t}$ of 
order~$p$. Let~$\Gamma$ denote the minimal polynomial of either~$\bar{t}$ or
of $\bar{t}_1$ on~$V$. 

Then~$\Gamma$ has at most two irreducible factors. If~$\kappa$ is the zero 
form,~$\Gamma$ is irreducible. If~$\kappa$ is a hermitian form, either 
$\Gamma = \Gamma^\dagger$ is irreducible, or $\Gamma = \Delta\Delta^\dagger$ 
where $\Delta$ is a monic irreducible polynomial over $\mathbb{F}_{q^2}$ with 
$\Delta \neq \Delta^\dagger$. In the other cases, either $\Gamma = \Gamma^*$ is 
irreducible, or $\Gamma = \Delta\Delta^*$, where~$\Delta$ is a monic irreducible 
polynomial over $\mathbb{F}_{q}$ with $\Delta \neq \Delta^*$. If~$\Gamma$ has 
two irreducible factors, each f them occurs with the same multiplicity in the 
characteristic polynomial of~$\bar{t}$, respectively~$\bar{t}_1$ on~$V$.

Finally,~$\bar{t}_1$ has non non-zero fixed vector on~$V$.
\end{lem}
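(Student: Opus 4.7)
The plan is to decompose $V$ under the semisimple operator $\bar{t}$ (semisimple since $p \neq r$) and to exploit the radical property of $D$ together with form preservation. Writing the minimal polynomial $\Gamma' = \prod_i \Delta_i$ as a product of distinct monic irreducible factors over $\mathbb{F}_{q^\delta}$, one obtains a $\bar{t}$-invariant decomposition $V = \bigoplus_i V_i$, where $V_i$ is the sum of the $\alpha$-eigenspaces for $\alpha$ in the Galois orbit corresponding to $\Delta_i$. Note that $\bar{t}$ has the same order as $t$, because the kernel $Z$ of $\nu$ has order at most $2$ and hence trivial intersection with the cyclic odd-order group $\langle t \rangle$. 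Since $\bar{t}$ has trivial fixed space by assumption, $\Gamma_0$ does not divide the minimal polynomial of $\bar{t}$; whether it divides that of $\bar{t}_1$ will be resolved by the final assertion.

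For $\kappa$ non-zero, form preservation groups the $\Delta_i$ via the pairing $\Delta \mapsto \Delta^*$ (symplectic/orthogonal case) or $\Delta \mapsto \Delta^\dagger$ (hermitian case): either $\Delta_i$ is self-paired and $V_i$ is $\kappa$-non-degenerate, or $\Delta_i$ is paired with a distinct $\Delta_{i^*}$ and $V_i \oplus V_{i^*}$ is a non-degenerate hyperbolic sum with $V_i$ and $V_{i^*}$ totally isotropic. Thus $V$ splits as an orthogonal sum of $\langle\bar{t}\rangle$-stable, $\kappa$-non-degenerate subspaces, one per pairing-orbit of the $\Delta_i$'s; in the zero-form case each $\Delta_i$ is its own orbit.

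The crucial step uses the radical property. For each (pairing-)orbit, the corresponding summand $U$ sits inside a classical subgroup of $\nu(G)$ whose centralizer of $\bar{t}|_U$ contains a torus obtained by extension of scalars to $\mathbb{F}_{q^\delta}[\zeta_i]$ for a $p$-power-order root $\zeta_i$ of $\Delta_i$, and this centralizer therefore contains non-trivial $p$-elements. Since $Z$ is a $p'$-group, $p$-subgroups of $C_{\nu(G)}(\bar{t})$ lift through $\nu$ to $p$-subgroups of $C_G(D)$. If there were more than one orbit, $C_{\nu(G)}(\bar{t})$ would decompose as a central product indexed by the orbits, and one could choose a $p$-element acting non-trivially on only one summand; its lift would lie in $C_G(D) \setminus D$, the diagonal structure of $D$ across the summands precluding membership in $D$. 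As $C_G(D) \trianglelefteq N_G(D)$, this would yield $O_p(N_G(D)) \supsetneq D$, contradicting the radical property. Hence $\Gamma$ has exactly one orbit, giving one of the listed shapes.

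Finally, with $\Gamma$ irreducible or a Galois-paired product $\Delta\Delta^*$ (resp.\ $\Delta\Delta^\dagger$), all roots of $\Gamma$ are Galois conjugate to a single element $\zeta$ (or to $\zeta$ and its pair $\zeta^{-1}$ or $\zeta^{-q}$, which share its order), so all eigenvalues of $\bar{t}$ have a common order. Since $\bar{t}$ has order $|t|$ and no non-trivial fixed vector, this order equals $|t|$; the eigenvalues of $\bar{t}_1 = \bar{t}^{|t|/p}$ are therefore primitive $p$-th roots of unity, and $\bar{t}_1$ also has trivial fixed space. The main obstacle is making the ``single orbit'' argument fully rigorous---in particular, ruling out that the additional $p$-element already lies in $D$---which requires careful use of the cyclic and diagonal structure of $D$ inside the product decomposition of $C_G(D)$.
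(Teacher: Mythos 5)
Your overall strategy is the same as the paper's: use the radical property of $D$ together with the primary decomposition of the semisimple element $\bar{t}$ to force a single orbit of irreducible factors. The paper gets this more economically by passing to $I(V,\kappa)$, where $\bar{D}$ is again radical (since $\nu(G) \unlhd I(V,\kappa)$ has $2$-power index), hence $\bar{D}=O_p(C_{I(V,\kappa)}(\bar{D}))$, and then citing Fong--Srinivasan's description of $C_{I(V,\kappa)}(\bar{D})$ as a direct product over the primary decomposition. You try to spell out that step, but there is a genuine gap in the key inference: from ``one can find a $p$-element in $C_G(D)\setminus D$'' you conclude ``$O_p(N_G(D))\supsetneq D$.'' This does not follow --- a $p$-element of $C_G(D)$ need not lie in any normal $p$-subgroup of $N_G(D)$. (Your closing remark misdiagnoses the obstacle: the issue is not whether the element lies in $D$, which your diagonal argument handles, but whether it lies in $O_p(N_G(D))$.)

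The fix is to choose the extra $p$-element more carefully: take $u$ acting as a scalar of $p$-power order on one non-degenerate summand and trivially on the others, so that $u$ lies in $Z(C_{I(V,\kappa)}(\bar{D}))$. Then $O_p(Z(C_{I(V,\kappa)}(\bar{D})))$ is a characteristic subgroup of $C_{I(V,\kappa)}(\bar{D})\unlhd N_{I(V,\kappa)}(\bar{D})$, hence contained in $O_p(N_{I(V,\kappa)}(\bar{D}))=\bar{D}$; but with two or more orbits this group is a direct product of two non-trivial $p$-groups, hence non-cyclic, contradicting that $\bar{D}$ is cyclic. (Each factor of the direct product $C_{I(V,\kappa)}(\bar{D})\cong\prod_i\GL_{m_i}^{\varepsilon_i}(q^{f_i})$ has $p$ dividing the order of its center, because the roots $\zeta_i$ have $p$-power order $>1$ and so $p\mid q^{f_i}-\varepsilon_i$.) A second, smaller gap: you prove the last assertion (trivial fixed space of $\bar{t}_1$), but never explicitly deduce the stated shape of the minimal polynomial of $\bar{t}_1$ from that of $\bar{t}$. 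This needs one more line --- the roots of the minimal polynomial of $\bar{t}_1$ are the $j$th powers of those of $\bar{t}$ with $j=|t|/p$, and since the Frobenius and the pairing $\zeta\mapsto\zeta^{-1}$ (resp.\ $\zeta\mapsto\zeta^{-q}$) commute with $\zeta\mapsto\zeta^j$, the Galois-orbit and pairing structure is inherited --- which is exactly how the paper handles it.
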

\begin{proof}
Suppose first that~$\Gamma$ is the minimal polynomial of~$\bar{t}$. Notice 
that~$\nu(G)$ is a normal subgroup of~$I(V,\kappa)$ of $2$-power index.
Thus $\bar{D} := \langle \bar{t} \rangle$ is a 
radical $p$-subgroup of~$I(V,\kappa)$ by \cite[Lemma~$2.3$(b)(c)]{HL24}, 
and hence $\bar{D} = O_p( C_{I(V,\kappa)}( \bar{D} ) )$ by 
\cite[Lemma~$2.2$(b)(c)]{HL24}. The primary 
decomposition of~$\bar{t}$ implies our claims; see, e.g.\ \cite[\S 1]{fs2} and 
\cite[($1.10$), ($1.13$)]{fs4} (although the latter reference assumes 
that~$q$ is odd, the results are also valid for even~$q$).

Now suppose that~$\Gamma$ is the minimal polynomial of~$\bar{t}_1$. If 
$\bar{t}_1 = \bar{t}^j$, the roots of~$\Gamma$ are the $j$th powers of the 
roots of the minimal polynomial of~$\bar{t}$. Suppose first that~$\kappa$ is 
not a hermitian form so that the ground field is~$\mathbb{F}_q$. Let~$\varphi$ 
denote the Frobenius morphism of~$\mathbb{F}$ raising every element to its~$q$th 
power. Let $\zeta \in \mathbb{F}$ denote a root of~$\Gamma$. Then~$\Gamma$ is 
irreducible, if and only if every root of~$\Gamma$ is conjugate under 
$\langle \varphi \rangle$ to $\zeta$. The fact that $\Gamma = \Gamma^*$ is 
irreducible, or $\Gamma = \Delta\Delta^*$, where~$\Delta$ is a monic irreducible 
polynomial over $\mathbb{F}_{q}$ with $\Delta \neq \Delta^*$, is equivalent to 
the fact that any other root of~$\Gamma$ is conjugate under 
$\langle \varphi \rangle$ 
to $\zeta$ or to $\zeta^{-1}$. Thus the asserted properties of~$\Gamma$ follow 
from the corresponding properties of the minimal polynomial of~$\bar{t}$.
An analogous argument works in the case when~$\kappa$ is a hermitian form, 
replacing~$\varphi$ by~$\varphi^2$ and $\zeta^{-1}$ by~$\zeta^{-q}$.

The claim about the multiplicity in the characteristic polynomial follows from 
$\bar{t}, \bar{t}_1 \in I(V,\kappa)$.

The last assertion follows from the previous ones, which imply that all 
non-trivial eigenvalues of~$\bar{t}$ have the same order.
\end{proof}

\subsection{A reduction lemma} Let~$\bB$ be a $p$-block of a group~$G(V,\kappa)$ 
with a non-trivial cyclic defect group~$D$. The following lemma reduces the
computation of the invariant $W(\bB)$ to the case where the fixed space 
of~$\nu(D)$ on~$V$ is trivial.

\begin{lem}
\label{NoEigenvalueIs1}
Let $G := G( V, \kappa )$ be one of the groups introduced 
in~{\rm (\ref{ClassicalGroups})} and let~$\bB$ be a $p$-block of~$G$ with a 
non-trivial cyclic defect group~$D$. Put $\bar{D} := \nu( D )$.

Let~$V^0$ be the fixed space of~$\bar{D}$ on~$V$, and let~$\kappa^0$ denote 
the restriction of~$\kappa$ to~$V^0$. Then~$(V^0,\kappa^0)$ is non-degenerate,
unless~$\kappa$ is the zero form. Let~$V'$ denote the orthogonal complement 
of~$V^0$ (or any complement in case~$\kappa$ is the zero form). Finally, let 
$\kappa'$ denote the restriction of $\kappa$ to~$V'$.

Then $G( V', \kappa' )$ embeds into~$G$, and~$D$ is contained in the image~$G'$ 
of this embedding. Moreover, there is a block~$\bB'$ of~$G'$ with defect 
group~$D$ such that $W( \bB ) \cong W( \bB' )$.
\end{lem}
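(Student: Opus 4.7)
The plan has four main steps.

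First, I would establish the orthogonal decomposition $V = V^0 \oplus V'$. Since $D$ is a $p$-group and $p \nmid q$, the image $\bar D = \nu(D)$ consists of semisimple isometries, so $V$ decomposes as the direct sum of the fixed space $V^0$ and its complement $V_1$, obtained from the primary decomposition of a generator of $\bar D$. A standard argument using that $\bar D$ acts by isometries shows that $V^0$ and $V_1$ are mutually orthogonal with respect to $\kappa$ whenever $\kappa$ is non-zero, so $V^0$ is non-degenerate; in the zero-form case, we simply pick any complement and put $V' = V_1$.

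Second, I would construct $G'$. In the non-spin cases, the stabilizer of the decomposition $V = V^0 \perp V'$ in $I(V,\kappa)$ contains $I(V^0,\kappa^0) \times I(V',\kappa')$, and I would define $G'$ as the image of the second factor. In the spin case, I would use the standard central product embedding $\Spin(V^0,\kappa^0) \cdot \Spin(V',\kappa') \hookrightarrow \Spin(V,\kappa)$ with amalgamated subgroup $Z$, and take $G'$ to be the $\Spin(V',\kappa')$ factor. To see that $D \leq G'$, note that $\bar D$ acts trivially on $V^0$ and hence $\bar D \leq I(V',\kappa')$; since $|D|$ is odd and $|\ker \nu| \le 2$, the restriction $\nu|_D$ is injective, and $\bar D$ lifts uniquely into $G'$ by the odd-order condition, the lift necessarily being $D$.

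Third, I would analyse the centralizer $C_G(D_1)$. Because $\bar D_1$ acts trivially on $V^0$ and without fixed vector on $V'$ (by the last assertion of Lemma~\ref{ShapeOfMinimalPolynomial}), any isometry commuting with $\bar D_1$ must preserve the decomposition $V^0 \perp V'$; this gives
\[
C_{I(V,\kappa)}(\bar D_1) = I(V^0,\kappa^0) \times C_{I(V',\kappa')}(\bar D_1).
\]
Pulling back through $\nu$, and using that the central $Z$ is invisible at the odd prime $p$, I obtain that $C_G(D_1)$ is the (almost direct) product of $G(V^0,\kappa^0)$ and $C_{G'}(D_1)$, with $D \leq C_{G'}(D_1)$ as $D$ is cyclic containing $D_1$.

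Fourth, I would transfer the block. By the second main tool mentioned in the introduction, $W(\bB) \cong W(\bc)$, where $\bc$ is a Brauer correspondent of $\bB$ in $C_G(D_1)$, and $\bc$ has defect group $D$. Since the defect $D$ lies entirely in the $C_{G'}(D_1)$ factor, the block $\bc$ decomposes (up to the harmless order-two central overlap) as $\bc_0 \otimes \bc'$, where $\bc_0$ is a defect-zero block of $G(V^0,\kappa^0)$ and $\bc'$ is a block of $C_{G'}(D_1)$ with defect group $D$; a defect-zero tensor factor contributes only a one-dimensional module and therefore does not alter the source, giving $W(\bc) \cong W(\bc')$ as $kD$-modules. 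Finally, since $D$ is cyclic we have $N_{G'}(D) \leq C_{G'}(D_1)$, so there is a unique block $\bB'$ of $G'$ having $\bc'$ as Brauer correspondent in $C_{G'}(D_1)$; this $\bB'$ has defect group $D$ and satisfies $W(\bB') \cong W(\bc') \cong W(\bc) \cong W(\bB)$. The main obstacle will be step two in the spin case, where the subgroup $G(V^0,\kappa^0) \cdot G'$ of $G$ is a central product rather than a direct product; the odd-order hypothesis on $p$ and $D$ ensures that this central $\mathbb{Z}/2$ is inert for the block-theoretic computations, but care must be taken at each identification.
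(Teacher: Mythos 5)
Your overall strategy — decompose $V=V^0\perp V'$, build $G'$ (handling the spin case via the central product of spin groups), put $D$ inside $G'$, then pass to a local subgroup and transfer the invariant — is the right one, and steps one and two essentially match the paper (though you should note that you need $\bar D\leq\Omega(V',\kappa')$, not merely $\bar D\leq I(V',\kappa')$, before you can lift $\bar D$ through $\nu$ into $G'$; this follows, as the paper records, because $[I(V',\kappa'):\Omega(V',\kappa')]$ is a power of $2$ while $\bar D$ has odd order). Where you diverge is the choice of local subgroup: you pass to $C_G(D_1)$ and use a tensor decomposition of $\bc$ into a defect-zero factor and a factor $\bc'$ of $C_{G'}(D_1)$, whereas the paper passes to $N_G(D_1)$, takes $\bb'$ to be a block of $N_{G'}(D_1)\unlhd N_G(D_1)$ covered by the Brauer correspondent $\bb$ of $\bB$, and then invokes \cite[Corollary~$4.4$]{HL24} for $W(\bb)\cong W(\bb')$. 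Your tensor-decomposition route is a legitimate alternative in spirit (the ``almost direct product'' caveat for the spin case does need to be handled by noting that the amalgamated $\mathbb{Z}/2$ is a $p'$-group, so blocks of the central product are still tensor products up to a harmless identification), but it is less economical than the paper's covering argument.

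The genuine gap is the last step: the claim ``since $D$ is cyclic we have $N_{G'}(D)\leq C_{G'}(D_1)$'' is false in general. For $D$ cyclic of order $p^l$ with $p$ odd, the restriction map $\Aut(D)\to\Aut(D_1)$ has kernel of order $p^{l-1}$ and is surjective onto the cyclic group of order $p-1$, so the $p'$-part of $\Aut(D)$ acts \emph{faithfully} on $D_1$. Thus any $g\in N_{G'}(D)$ inducing a non-trivial $p'$-automorphism of $D$ — and such elements occur in essentially every cyclic block with more than one simple module — does not centralize $D_1$, so $N_{G'}(D)\not\leq C_{G'}(D_1)$. Consequently you cannot invoke the classical Brauer correspondence between $C_{G'}(D_1)$ and $G'$ to produce $\bB'$ from $\bc'$; you would have to pass instead through Brauer pairs and justify separately that the resulting block has defect group exactly $D$. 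The paper sidesteps this by working with $N_G(D_1)$ and $N_{G'}(D_1)$, where the inclusion $N_{G'}(D)\leq N_{G'}(D_1)$ is automatic because $D_1$ is characteristic in $D$, and the Brauer First Main Theorem applies directly.
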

\begin{proof}
Write $\bar{G} := I( V, \kappa )$. As~$D$ is a defect group of a $p$-block 
of~$G$, it is a radical $p$-subgroup of~$G$. Put~$\bar{D}_1 := \nu( D_1 )$, 
where~$D_1$ denotes the unique subgroup of~$D$ of order~$p$. Let~$t$ denote a 
generator of~$D$ and put $\bar{t} := \nu(t)$.

Suppose that~$\kappa$ is not the zero form. Then $(V^0, \kappa^0 )$ is 
non-degenerate, since the eigenspaces 
of~$\bar{t}$ on~$V$ for eigenvalues unequal to~$1$ are orthogonal to~$V^0$.
Identify $I^0 := I( V^0, \kappa^0 )$ and $I' := I( V', \kappa ' )$ with 
subgroups of~$\bar{G}$, and~$\bar{t}$ with an element of~$I'$ in the natural 
way.  

If $\kappa$ is a quadratic form and $q$ is odd, let 
$G^0 := \nu^{-1}( \Omega( V^0, \kappa^0 ))$ and 
$G' := \nu^{-1}( \Omega( V', \kappa' ))$. Then 
$G^0 \cong \Spin( V^0, \kappa^0 )$ and $G' \cong \Spin( V', \kappa' )$; see, 
e.g.\ \cite[Chapter V, Section~$4$]{Artin}. Otherwise, let 
$G^0 := G( V^0, \kappa^0 )$ and $G' := G( V', \kappa' )$. 

We claim that $D \leq G'$ and that $N_{G'}( D_1 )$ is a normal subgroup of 
$N_G( D_1 )$. We only prove these claims in case~$\kappa$ is a quadratic form.
The proof in the other cases is analogous but simpler. Assume that~$\kappa$ is a 
quadratic form and  write $\Omega^0 := \Omega( V^0, \kappa^0 )$ and $\Omega' :=
\Omega( V', \kappa' )$. Then $\bar{t} \in \Omega'$, as the index of $\Omega'$ 
in~$I'$ is a power of~$2$. Thus $t \in \nu^{-1}( \Omega' ) = G'$, which is our
first claim. As~$V^0$ is the fixed space of~$\bar{D_1}$ by 
Lemma~\ref{ShapeOfMinimalPolynomial}, we get
$\Omega^0 \times N_{\Omega'}( \bar{D}_1 ) \leq N_{\nu(G)}( \bar{D}_1 ) \leq 
I^0 \times N_{I'}( \bar{D}_1 )$. In particular, $N_{\Omega'}( \bar{D}_1 ) 
\unlhd N_{\nu(G)}( \bar{D}_1 )$. Now $\nu( N_{G}( D_1 ) ) = 
N_{\nu(G)}( \bar{D}_1 )$ by \cite[Lemma~$2.2$(a)]{HL24}. The kernel 
of~$\nu$ is contained in $N_{G}( D_1 )$, and hence 
$N_G( D_1 ) = \nu^{-1}( \nu( N_G( D_1 ) ) ) = \nu^{-1}( N_{\nu(G)}( \bar{D}_1 )$. 
Thus $\nu^{-1}( N_{\Omega'}( \bar{D}_1 ) ) \unlhd N_{G}( D_1 )$. As
$\nu^{-1}( N_{\Omega'}( \bar{D}_1 ) ) = N_{G'}( D_1 )$, once more by 
\cite[Lemma~$2.2$(a)]{HL24}, we obtain our second claim.

Let~$\bb$ denote the 
Brauer correspondent of~$\bB$ in $N_G( D_1 )$ and let~$\bb'$ be a block 
of~$N_{G'}( D_1 )$ covered by~$\bb$. As 
$D \leq N_{G'}( D_1 ) \unlhd N_G( D_ 1)$, there is a defect group~$D'$ 
of~$\bb'$ conjugate to~$D$ in~$N_G( D_1 )$. We may thus assume that
$D' = D$, and let $\bB'$ denote the Brauer correspondent of $\bb'$ in~$G'$.
Then $\bB'$ has defect group~$D$ and we find
$W( \bB ) \cong W( \bb ) \cong W( \bb' ) \cong W( \bB' )$, where the middle 
isomorphism follows from \cite[Corollary~$4.4$]{HL24}. This completes our proof.
\end{proof}
 
\subsection{The underlying algebraic groups}
\label{UnderlyingAlgebraicGroups}

To continue, we also need to consider the algebraic groups and their Steinberg 
morphisms that give rise to the finite groups introduced in 
Subsection~\ref{TheRelevantClassicalGroups}. At the same time we specify the
degrees~$n$ relevant to our investigation.

Let~$\bG$ denote one of the following classical groups over~$\mathbb{F}$, namely
$\GL_n( \mathbb{F} )$, $\Sp_n( \mathbb{F} )$ with~$n$ even, or 
$\Spin_n( \mathbb{F} )$ with $q$ odd if~$n$ is odd. Then there is an 
$\mathbb{F}_q$-rational structure on~$\bG$ and a corresponding Frobenius 
morphism~$F$, such that $G = \bG^F \cong G(V, \kappa)$ in the notation 
of Subsection~\ref{TheRelevantClassicalGroups}. The various cases are displayed
in Table~\ref{ClassicalAlgebraicGroups}.
\begin{table}
\caption{\label{ClassicalAlgebraicGroups} Some classical algebraic groups}
$
\begin{array}{cclccc}\\ \hline\hline
\text{\rm Case} & \bG & \text{\rm Condition} & F & \bG^F & \kappa \rule[- 3pt]{0pt}{ 16pt} \\ \hline\hline
1 & \GL_n( \mathbb{F} ) & n \geq 1 & \begin{array}{c} F_{+1} \\ F_{-1} \end{array} & 
\begin{array}{c} \GL_n( q ) \\ \GU_n( q ) \end{array} &
\begin{array}{c} \text{\rm zero} \\ \text{\rm hermitian} \end{array} \rule[ 0pt]{0pt}{ 19pt} \\ \hline
2 & \Sp_n( \mathbb{F} ) & n \geq 4 \text{\rm\ even} & F & \Sp_n( q ) & 
\text{\rm symplectic} \rule[-3pt]{0pt}{ 13pt} \\ \hline
3 & \Spin_n( \mathbb{F} ) & n \geq 7 \text{\rm\ odd}, q \text{\rm\ odd} & 
F & \Spin_n( q ) & \text{\rm quadratic} \rule[-5pt]{0pt}{ 15pt} \\ \hline
4 & \Spin_n( \mathbb{F} ) & n \geq 8 \text{\rm\ even} & \begin{array}{c} F_{+1} \\ F_{-1} \end{array} & 
\begin{array}{c} \Spin^+_n( q ) \\ \Spin^-_n( q ) \end{array} &
\begin{array}{c} \text{\rm quadratic} \\ \text{\rm quadratic} \end{array} \rule[- 2pt]{0pt}{20pt} \\ \hline\hline
\end{array}
$
\end{table}

Let us now be more specific. For a positive integer~$m$, let~$J_m$ denote the 
$(m \times m)$-matrix with $1$'s along the anti-diagonal, and $0$'s elsewhere.
Let $\bV := \mathbb{F}^n$, the standard column space of dimension~$n$ 
over~$\mathbb{F}$. Gram matrices of bilinear forms on~$\bV$ are taken with 
respect to the standard basis. In Case~$1$, we put $\bG := \GL( \bV )$. In 
Case~$2$, we have $n = 2m$ even, and we let $\boldsymbol{\omega}$ denote the 
symplectic form on~$\bV$, whose Gram matrix equals 
$\left( \begin{array}{cc} 0 & J_m \\ -J_m & 0 \end{array} \right)$. In Case~$3$, 
we have $q$ odd, and we let $\boldsymbol{\omega}$ denote the quadratic form 
on~$\bV$, whose polar form has Gram matrix~$J_n$. In Case~$4$, we have $n = 2m$ 
even, and we let~$\boldsymbol{\omega}$ denote the quadratic form on~$\bV$ 
defined by
$$\boldsymbol{\omega}( (x_1, \ldots , x_n)^t ) = \sum_{i = 1}^m x_i x_{n-i+1}$$
for $(x_1, \ldots , x_n)^t \in \bV$. Then the polar form of~$\boldsymbol{\omega}$
has Gram matrix~$J_n$. In Cases~$2$--$4$, the forms~$\boldsymbol{\omega}$ are 
non-degenerate, and we write $I( \bV, \boldsymbol{\omega} )$ for their full 
isometry groups. In Case~$2$, we put $\bG := I( \bV, \boldsymbol{\omega} )$.

In Cases~$3$ and~$4$, we let~$\bG$ denote the spin group 
of~$\boldsymbol{\omega}$, and we have a homomorphism 
$\nu : \mathbf{G} \rightarrow I( \bV, \boldsymbol{\omega} )$ of algebraic 
groups, the vector representation of~$\mathbf{G}$. Its kernel has order 
$\gcd( 2, q - 1)$, its image $\bar{\mathbf{G}} := \SO_n( \mathbb{F} )$ is the 
connected component of~$I( \bV, \boldsymbol{\omega} )$. If~$q$ is odd, 
$\SO_n( \mathbb{F} ) = I( \bV, \boldsymbol{\omega} ) \cap \SL_n( \mathbb{F} )$. 
As the kernel of~$\nu$ consists of $F$-fixed points, there is a unique Steinberg 
morphism of~$\bar{\mathbf{G}}$, also denoted by~$F$, such that~$\nu$ is 
$F$-equivariant. Then $\bar{G} = \bar{\mathbf{G}}^F = \nu( \mathbf{G} )^F$ is a
special orthogonal group of the appropriate type, 
and $\nu( G ) = \nu( \mathbf{G}^F ) \cong \Omega(V, \kappa )$.
If~$n$ is even and~$q$ is odd, $[\bar{G}\colon\!\nu(G)] = 2$; 
otherwise, $\bar{G} = \nu(G)$. Finally, there is a Steinberg 
morphism~$F$ on~$I( \bV, \boldsymbol{\omega} )$, which restricts to the given 
Steinberg morphism~$F$ on $\bar{\bG}$ such that 
$I( \bV, \boldsymbol{\omega} )^F \cong I( V, \kappa )$.

\begin{lem}
\label{NonTypeAClassicalGroups}
Let $(\mathbf{G},F)$ be as in one of the Cases~$3$ or~$4$ of 
{\rm Table~\ref{ClassicalAlgebraicGroups}}. Write~$Z$ for the kernel of~$\nu$. 
Let $t \in G$ be a $p$-element and put $\bar{t} := \nu( t )$. Then there is a 
short exact sequence
\begin{equation}
\label{IsogenyOfCentralizers}
1 \longrightarrow Z \longrightarrow C_{\bG}( t ) \stackrel{\nu}{\longrightarrow} 
C_{\bar{\bG}}( \bar{t} ) \longrightarrow 1.
\end{equation}
In particular, $|C_G( t )| = |C_{\bar{G}}( \bar{t} )|$.
\end{lem}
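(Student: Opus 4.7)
The plan is to verify exactness of the three-term sequence directly and then derive the order equality via a short Lang--Steinberg cohomology computation. The case $q$ even is trivial, since the kernel $Z$ of $\nu$ has order $\gcd(2,q-1) = 1$ and $\nu$ is an isomorphism; so I will assume $q$ is odd and $|Z| = 2$.

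For exactness, the containment $Z \subseteq C_{\bG}(t)$ and the identity $\ker(\nu|_{C_{\bG}(t)}) = Z$ both follow from the centrality of $Z$ in $\bG$, and $\nu(C_{\bG}(t)) \subseteq C_{\bar{\bG}}(\bar{t})$ is obvious. The only non-formal point is surjectivity at the right. Given $\bar{g} \in C_{\bar{\bG}}(\bar{t})$, I lift it to some $g \in \bG$ using surjectivity of the vector representation. Then $gtg^{-1}t^{-1} \in \ker(\nu) = Z$, so $gtg^{-1} = tz$ for some $z \in Z$. Here is where the hypothesis that $t$ is a $p$-element with $p$ odd enters: $t$ and $z$ are commuting elements of coprime orders, so $|tz| = |t|\cdot|z|$. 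But $|tz| = |gtg^{-1}| = |t|$, forcing $|z| = 1$, hence $g \in C_{\bG}(t)$.

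For the order equality I invoke two standard facts. First, $\bG = \Spin_n(\mathbb{F})$ is simply connected and $t$ is semisimple (as $p \neq r$), so by Steinberg's theorem on centralizers of semisimple elements, $C_{\bG}(t)$ is connected. Lang--Steinberg then gives $H^1(F, C_{\bG}(t)) = 1$. Second, the setup records that $F$ acts trivially on the finite cyclic group $Z$ of order~$2$, so $|Z^F| = |Z|$ and, by the standard Tate cohomology computation for a cyclic group with trivial action, $|H^1(F, Z)| = |Z|$. Applying the long exact sequence in $F$-cohomology to the short exact sequence~(\ref{IsogenyOfCentralizers}) yields
$$1 \to Z^F \to C_{\bG}(t)^F \to C_{\bar{\bG}}(\bar{t})^F \to H^1(F, Z) \to 1,$$
and counting gives $|Z|\cdot|C_{\bar{G}}(\bar{t})| = |Z|\cdot|C_{G}(t)|$, whence $|C_{G}(t)| = |C_{\bar{G}}(\bar{t})|$.

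The only step I expect any friction with is the appeal to connectedness of $C_{\bG}(t)$; this is handled cleanly by Steinberg's theorem applied to the simply connected group $\Spin_n$, but deserves to be made explicit since $\bar{\bG} = \SO_n$ is \emph{not} simply connected and the analogous assertion for $C_{\bar{\bG}}(\bar{t})$ need not hold.
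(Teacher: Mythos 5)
Your proof is correct but takes a genuinely different route from the paper's. The paper's (very terse) argument begins by invoking \cite[Corollary~$2.6$]{GeHi1} to conclude that $C_{\bar{\bG}}(\bar{t})$ is connected (this uses that $p$ is odd and that the fundamental group of $\SO_n$ has order~$2$), and then cites \cite[$1.3.10$(e)]{GeMa} for the exact sequence and \cite[Proposition~$1.4.13$(c)]{GeMa} for the order equality. You instead prove surjectivity of $\nu|_{C_{\bG}(t)}$ by an entirely self-contained argument: lift $\bar{g}$, observe that $gtg^{-1} = tz$ for some $z \in Z$, and kill $z$ by comparing $|gtg^{-1}| = |t|$ with $|tz| = |t|\cdot|z|$, using that $|t|$ is a power of the odd prime $p$ while $|z| \mid 2$. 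This requires no connectedness statement at all. You then get the order equality from the Lang--Steinberg long exact sequence, invoking connectedness of $C_{\bG}(t)$ \emph{upstairs} (Steinberg's theorem for the simply connected group $\Spin_n$) rather than of $C_{\bar{\bG}}(\bar{t})$ \emph{downstairs}; the two connectedness statements are equivalent here in light of the established exact sequence, but you correctly note that the upstairs one is the cleaner appeal. The cohomology bookkeeping ($|Z^F| = |H^1(F,Z)| = |Z|$ since $F$ acts trivially on the finite abelian $Z$, and $H^1(F, C_{\bG}(t)) = 1$ by Lang) is sound and gives $|C_G(t)| = |C_{\bar G}(\bar t)|$ as claimed. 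Net comparison: your exact-sequence argument is more elementary and transparent than the citation the paper uses, while for the order count the two proofs appeal to essentially the same body of facts but anchored at opposite ends of the isogeny.
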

\begin{proof}
As~$p$ is odd, $C_{\bar{\bG}}( \bar{t} )$ is connected by 
\cite[Corollary~$2.6$]{GeHi1}. The first claim thus follows from 
\cite[$1.3.10$(e)]{GeMa}, and the second one 
from \cite[Proposition~$1.4.13$(c)]{GeMa}. 
\end{proof}

\subsection{The symplectic and orthogonal groups} 
\label{TheSymplecticAndOrthogonalGroups} Here we investigate the 
centralizers of possible defect groups of cyclic $p$-blocks in the symplectic
and orthogonal groups, under the restrictions suggested by 
Lemma~\ref{NoEigenvalueIs1}. Let $(\bG,F)$ be as in Cases $2$--$4$ of 
Table~\ref{ClassicalAlgebraicGroups}, so that $\bG^F = G \cong G( V, \kappa )$ with 
$G( V, \kappa )$ as in~{\rm (\ref{ClassicalGroups})}, where~$\kappa$ is a 
symplectic form if~$\bG$ is as in Case~$2$ of 
Table~\ref{ClassicalAlgebraicGroups}, and a quadratic form, otherwise. 
If~$\kappa$ is a symplectic form, put $\bar{\bG} := \bG$, and let 
$\nu : \bG \rightarrow \bar{\bG}$ denote the identity map.
If~$\kappa$ is a quadratic form, let $\bar{\bG} = \SO_n( \mathbb{F} )$ denote 
the image of the vector representation~$\nu$ of~$\bG$. Let~$Z$ denote the kernel
of~$\nu$.

From now on, we will use the common $\varepsilon$-convention for 
the groups $\GL_n(q)$ and $\GU_n(q)$. Let $\varepsilon \in \{ 1, -1 \}$. Then 
$\GL^\varepsilon_n(q)$ denotes the general linear group $\GL_n(q)$, if 
$\varepsilon = 1$, and the general unitary group $\GU_n(q)$, if 
$\varepsilon = -1$.  Analogous conventions are used for $\SL^\varepsilon_n(q)$ 
and $\PSL^\varepsilon_n(q)$.

\begin{lem}
\label{NonTypeAClassicalGroups0}
Assume the notation and hypothesis introduced at the beginning of 
{\rm Subsection~\ref{TheSymplecticAndOrthogonalGroups}}.

Let $t \in G$ be a non-trivial $p$-element such that $\langle t \rangle$ is a 
radical $p$-subgroup of~$G$. Let~$t_1$ denote a power of~$t$ of order~$p$.

Put $\bar{t}_1 := \nu(t_1)$. Assume that $\bar{t}_1$ does not fix any
non-trivial vector of~$V$, and let~$\Gamma$ denote the minimal polynomial
of~$\bar{t}_1$. By {\rm Lemmas~\ref{ShapeOfMinimalPolynomial}} 
and~{\rm \ref{DegreesOfPolynomials}}, the degree 
of~$\Gamma$ is even, $2d$, say, and~$\Gamma$ has at most two irreducible 
constituents. Put $\varepsilon := -1$, if $\Gamma$ is irreducible, and 
$\varepsilon := 1$, otherwise.

Then $n = 2md$ for some positive integer~$m$, 
$$C_{\bar{\bG}}( \bar{t}_1 ) \cong 
\GL_m( \mathbb{F} ) \times \cdots \times \GL_m( \mathbb{F} ),$$
with~$d$ factors, and 
$$C_{\bar{G}}( \bar{t}_1 ) \cong \GL^\varepsilon_m( q^d ).$$
\end{lem}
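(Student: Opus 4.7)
The approach is to analyse the $\mathbb{F}_q[\bar{t}_1]$-module structure of~$V$, which is governed by the minimal polynomial~$\Gamma$, and then to extract the centralizer as the stabilizer of~$\kappa$ within the $\mathbb{F}_q[\bar{t}_1]$-linear automorphism group of~$V$.

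First, I would apply Lemma~\ref{ShapeOfMinimalPolynomial} with $\Gamma' = \Gamma$ itself (permissible since $\Gamma_0$ is not a factor of~$\Gamma$ by hypothesis), and combine with Lemma~\ref{DegreesOfPolynomials} to record the dichotomy: either $\Gamma = \Gamma^*$ is irreducible of degree~$2d$ (case $\varepsilon = -1$), or $\Gamma = \Delta\Delta^*$ with $\Delta \neq \Delta^*$ both irreducible of common degree~$d$ (case $\varepsilon = +1$). In the first case, $\mathbb{F}_q[\bar{t}_1] \cong \mathbb{F}_q[x]/(\Gamma) \cong \mathbb{F}_{q^{2d}}$ endows~$V$ with the structure of an $\mathbb{F}_{q^{2d}}$-vector space of some dimension~$m$, so that $n = 2dm$. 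In the second case, the primary decomposition yields $V = V_\Delta \oplus V_{\Delta^*}$, where each summand is an $\mathbb{F}_{q^d}$-vector space; a short eigenvalue argument over~$\mathbb{F}$ (using that the $\bar{t}_1$-invariance of~$\kappa$ forces $(1 - \zeta\eta)\kappa(v, w) = 0$ for eigenvectors $v, w$ with eigenvalues $\zeta, \eta$, together with the fact that $\zeta^{-1}$ is a root of~$\Delta^*$, not of~$\Delta$) shows that $V_\Delta$ and $V_{\Delta^*}$ are both totally isotropic and paired non-degenerately by~$\kappa$, forcing their $\mathbb{F}_{q^d}$-dimensions to coincide, say~$m$, so that $n = 2dm$ again.

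For the finite centralizer $C_{\bar{G}}(\bar{t}_1)$, the key point is that its elements are precisely the $\mathbb{F}_q[\bar{t}_1]$-linear isometries of~$\kappa$. In the case $\varepsilon = -1$, using that $q^d \equiv -1 \pmod{p}$ so that $\bar{t}_1^{q^d} = \bar{t}_1^{-1}$ realises the Galois involution of $\mathbb{F}_{q^{2d}}/\mathbb{F}_{q^d}$, I would lift~$\kappa$ to an $\mathbb{F}_{q^{2d}}/\mathbb{F}_{q^d}$-hermitian form~$h$ on~$V$ via a standard trace-based construction, so that $C_{\bar{G}}(\bar{t}_1)$ becomes the isometry group of~$h$, namely $\GU_m(q^d)$. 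In the case $\varepsilon = +1$, an $\mathbb{F}_{q^d}$-linear pair $(g, g') \in \GL_{\mathbb{F}_{q^d}}(V_\Delta) \times \GL_{\mathbb{F}_{q^d}}(V_{\Delta^*})$ preserves the pairing if and only if $g' = (g^*)^{-1}$, which yields $C_{\bar{G}}(\bar{t}_1) \cong \GL_m(q^d)$. Combining the two cases gives $C_{\bar{G}}(\bar{t}_1) \cong \GL_m^\varepsilon(q^d)$.

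For the algebraic centralizer over~$\mathbb{F}$, I would extend scalars and decompose $V \otimes_{\mathbb{F}_q} \mathbb{F}$ into $\bar{t}_1$-eigenspaces $V_\zeta$, each of $\mathbb{F}$-dimension~$m$; the centralizer $C_{\bar{\bG}}(\bar{t}_1)$ then decomposes as a product of factors $\GL(V_\zeta) \cong \GL_m(\mathbb{F})$ dictated by the pairing structure of~$\kappa$ on the various $V_\zeta$, with Frobenius~$F$ permuting these factors in accordance with the Galois action on the eigenvalue set so that the $F$-fixed points recover $\GL_m^\varepsilon(q^d)$ as computed above. The main technical obstacle will be the construction of the hermitian form~$h$ in the $\varepsilon = -1$ case and the verification of its sesquilinearity and non-degeneracy with respect to the Galois involution corresponding to the action $\bar{t}_1 \mapsto \bar{t}_1^{-1}$; this is a standard trick, but requires careful bookkeeping of the interplay between $\kappa$, the trace map, and the $\mathbb{F}_{q^{2d}}$-structure coming from $\bar{t}_1$.
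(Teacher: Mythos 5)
Your proposal takes a genuinely different route from the paper's. The paper works first over the algebraic closure~$\mathbb{F}$: it decomposes $\bV$ into $\bar{t}_1$-eigenspaces~$\bV_\zeta$, observes that each $\bV_\zeta$ is totally isotropic with $\bV_\zeta \oplus \bV_{\zeta^{-1}}$ non-degenerate, deduces $C_{I(\bV,\boldsymbol{\kappa})}(\bar{t}_1)$ is a product of $\GL_m(\mathbb{F})$'s, and only then --- using the connectedness of this product --- concludes that $C_{I(\bV,\boldsymbol{\kappa})}(\bar{t}_1) = C_{\bar{\bG}}(\bar{t}_1)$. The finite statement $C_{\bar{G}}(\bar{t}_1) \cong \GL_m^{\varepsilon}(q^d)$ then follows by taking $F$-fixed points and quoting Fong--Srinivasan~\cite[($1.13$)]{fs4}. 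You instead compute the finite centralizer from scratch (primary decomposition over $\mathbb{F}_q$, plus the trace-form construction realising $\kappa$ as the trace of a hermitian or dual pairing over $\mathbb{F}_{q^d}$), and only afterwards extend scalars to $\mathbb{F}$. This buys you a self-contained proof of the finite statement without citing~\cite{fs4}, at the cost of the careful bookkeeping you already flag; the paper's order of operations is shorter because the connectedness observation and the reference to~\cite{fs4} do the work.

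One genuine gap in your version: you characterise the finite centralizer as ``the $\mathbb{F}_q[\bar{t}_1]$-linear isometries of $\kappa$'', which is $C_{I(V,\kappa)}(\bar{t}_1)$, but what the lemma asks for is $C_{\bar{G}}(\bar{t}_1)$ with $\bar{G} = \SO_n(q) \leq O_n(q) = I(V,\kappa)$ (in the orthogonal cases). You never argue that these two centralizers agree. The paper obtains this for free from the connectedness of $C_{I(\bV,\boldsymbol{\kappa})}(\bar{t}_1)$, since $\bar{\bG} = I(\bV,\boldsymbol{\kappa})^\circ$. In your order of operations you would either have to run the algebraic computation first and then descend, or argue directly, e.g.\ by noting that the $\mathbb{F}_q$-determinant of any element of $\GU_m(q^d)$ or of a pair $(g,(g^\vee)^{-1}) \in \GL_m(q^d)$, viewed inside $O_n(q)$, is automatically $1$, so that the centralizer already lands in $\SO_n(q)$. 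This is a fixable omission, not a fatal one, but it must be addressed for the argument to close.
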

\begin{proof}
Clearly, $n = 2md$, where~$m$ denotes the common multiplicity of the irreducible
factors of~$\Gamma$ in the characteristic polynomial of~$\bar{t}_1$. In 
particular,~$\bG$ is not as in Case~$3$ of Table~\ref{ClassicalAlgebraicGroups}. 

Let~$\zeta \in \mathbb{F}$ denote a root of~$\Gamma$, and write $\bV_{\zeta}$ 
for the corresponding eigenspace of~$\bar{t}_1$ on~$\bV$. Then $\bV_{\zeta}$ is 
$C_{I(\bV,\boldsymbol{\omega})}( \bar{t}_1 )$-invariant and 
$\dim( \bV_{\zeta} ) = m$. Let $\zeta'$ be a root of~$\Gamma$ with 
$\zeta' \neq \zeta^{-1}$. A straightforward calculation shows that 
$\bV_{\zeta}$ and $\bV_{\zeta'}$ are orthogonal with respect 
to~$\boldsymbol{\omega}$ in Case~$2$, respectively to the polar form 
of~$\boldsymbol{\omega}$ in Case~$4$. In particular,~$\bV_{\zeta}$ is totally 
isotropic since $\zeta \neq \pm1$, and $\bV_{\zeta} \oplus \bV_{\zeta^{-1}}$ is 
non-degenerate, as~$\boldsymbol{\omega}$ in Case~$2$, respectively the polar 
form of~$\boldsymbol{\omega}$ in Case~$4$, are non-degenerate.  
The restriction of $C_{I(\bV,\boldsymbol{\omega})}( \bar{t}_1 )$ to 
$\bV_{\zeta} \oplus \bV_{\zeta^{-1}}$ is isomorphic to $\GL_m( \mathbb{F} )$. 
Hence
$C_{I(\bV,\boldsymbol{\omega})}( \bar{t}_1 ) \cong 
\GL_m( \mathbb{F} ) \times \cdots \times \GL_m( \mathbb{F} )$ with~$d$ factors. 
In particular, $C_{I(\bV,\boldsymbol{\omega})}( \bar{t}_1 )$ is connected.

It follows that $C_{I(\bV,\boldsymbol{\omega})}( \bar{t}_1 ) \leq \bar{\bG}$ as
$\bar{\bG} = I(\bV,\boldsymbol{\omega})^{\circ}$. Thus
$C_{I(\bV,\boldsymbol{\omega})}( \bar{t}_1 ) = C_{\bar{\bG}}( \bar{t}_1 )$,
proving our first claim. 
Now $C_{\bar{G}}( \bar{t}_1 ) = C_{I(V,\omega)}( \bar{t}_1 )$, and by 
\cite[($1.13$)]{fs4} (which is also valid for even~$q$), we obtain 
$C_{I(V,\omega)}( \bar{t}_1 ) \cong \GL^\varepsilon_m( q^d ) )$. This is our 
second claim.
\end{proof}

\begin{rem}
\label{RemNonTypeAClassicalGroups0}
{\rm
Assume the notation and hypotheses of Lemma~\ref{NonTypeAClassicalGroups0}.
If $\Gamma$ is irreducible, then~$2d$ is the order 
of~$q$ modulo~$p$, and $p \mid q^d + 1$. On the other hand, if $\Gamma = 
\Delta\Delta^*$ with $\Delta \neq \Delta^*$, then~$d$ is odd, and~$d$ is the 
order of~$q$ modulo~$p$. Thus $p \mid q^d - \varepsilon$.

Suppose that~$\kappa$ is a quadratic form. If $\Gamma = \Delta\Delta^*$ with
$\Delta \neq \Delta^*$, then $G = \Spin^+_n( q )$. On the other hand, if
$\Gamma$ is irreducible, then $G = \Spin^+_n( q )$ if~$m$ is even, and
$G = \Spin^-_n( q )$ if~$m$ is odd. These results are proved in
\cite[Lemmas $4.1$, $4.5$]{HM19}.
}\hfill{$\Box$}
\end{rem}

\subsection{Reduction to the general linear and unitary groups}
We now put our focus on the centralizers of elements of order~$p$, under the 
hypotheses of Lemma~\ref{NonTypeAClassicalGroups0}. The lemma below reduces 
the computation of the invariants $W( \bB )$ in such centralizers to the 
corresponding question in general linear and unitary groups. The groups~$\bG$
and $\bar{\bG}$ in Lemma~\ref{BlocksAndIsogeny} below play the roles of the 
groups $C_{\bG}( t_1 )$, respectively $C_{\bar{\bG}}( \bar{t}_1 )$ of 
Lemma~\ref{NonTypeAClassicalGroups0}. Notice that in the situation of 
Lemma~\ref{NonTypeAClassicalGroups0}, the groups~$C_{G}( t_1 )$ 
and~$C_{\bar{G}}( \bar{t}_1 )$ need not be isomorphic, although they have the 
same order by Lemma~\ref{NonTypeAClassicalGroups}. For example, suppose that 
$G = \Spin_n^+( q )$ with~$q$ odd and $4 \mid n$. Then~$Z( G )$ is
elementary abelian of order~$4$; see \cite[Table~$6.1.2$]{Gor}. In this case,
we cannot have $C_G( t_1 ) \cong C_{\bar{G}}( \bar{t}_1 )$, as 
$C_{\bar{G}}( \bar{t}_1 ) \cong \GL^\varepsilon_m( q^d )$ has a cyclic center.

Fix $\varepsilon \in \{ -1, 1 \}$. In view of 
Remark~\ref{RemNonTypeAClassicalGroups0}, replacing $q^d$ by~$q$, the hypothesis 
$p \mid q - \varepsilon$ of Lemma~\ref{BlocksAndIsogeny} below is justified.
This lemma is based mainly on \cite{CaEn99}. For a concise and in parts more
general account of these results see \cite[Sections~$2$,~$3$]{KeMa}.

\begin{lem}
\label{BlocksAndIsogeny}
Let~$\bG$ be a connected reductive algebraic group and let~$\bar{\bG}$ denote a 
direct product of groups isomorphic to $\GL_n( \mathbb{F} )$. 
Let~$F$ and~$\bar{F}$ be Frobenius morphisms 
of~$\bG$ and~$\bar{\bG}$, respectively, arising from $\mathbb{F}_q$-rational
structures of~$\bG$ and~$\bar{\bG}$, respectively, such that
$\bar{G} = \bar{\bG}^{\bar{F}} \cong \GL_n^\varepsilon( q )$. 
Assume that $p \mid q - \varepsilon$.

Suppose that there is an isogeny $\nu : \bG \rightarrow \bar{\bG}$ with 
$\nu \circ F = \bar{F} \circ \nu$ and $|\!\ker(\nu)| \leq 2$.

Let $\bB$ be a $p$-block of~$G$ with a non-trivial cyclic defect group~$D$. Then 
there is an $F$-stable maximal torus~$\bT$ of~$\bG$, such that~$\bB$ is regular 
with respect to~$T$; see {\rm Definition~\ref{RegularBlocksDefinition}(b)}. By 
{\rm Lemma~\ref{GeneralizedRegularBlocks}},
we may thus assume that~$D$ is a Sylow $p$-subgroup of~$T$.

Put $\bar{\bT} := \nu( \bT )$. Then~$\bar{T}$ is cyclic of order 
$q^n - \varepsilon^n$.

Suppose that~$\bar{\bB}$ is a regular $p$-block of~$\bar{G}$ with respect to 
$\bar{T}$. Then $W( \bB ) \cong W( \bar{\bB} )$, if~$D$ and~$\bar{D}$ are 
identified.
\end{lem}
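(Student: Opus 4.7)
The plan is to identify $W(\bB)$ and $W(\bar{\bB})$ via the sign sequences, on (identified) defect groups, of their non-exceptional characters, and to show these sequences coincide via the invariant $\omega_{\bG}$ of Definition~\ref{OmegaInvariant}, which is preserved under $\nu$.

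First, I would verify that $D_{1}\leq Z(G)$ and $\bar{D}_{1}\leq Z(\bar{G})$. Since $\bar{T}$ is cyclic of order $q^{n}-\varepsilon^{n}$ and $p\mid q-\varepsilon$, every element of order $p$ in $\bar{T}$ has all its eigenvalues (on each natural $\GL_{n}$-representation of $\bar{\bG}$) equal to a single $p$-th root of unity in $\mathbb{F}$, hence acts as a scalar; thus $\bar{D}_{1}\leq Z(\bar{G})$. A short commutator argument, exploiting $|\!\ker(\nu)|\leq 2$ together with $p$ odd, then transfers this to $D_{1}\leq Z(G)$. Both $\bB$ and $\bar{\bB}$ are therefore nilpotent, and by Lemma~\ref{IdentifyingTheNonExceptional} each admits a unique non-exceptional, necessarily $p$-rational, character.

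Next, I would identify these characters. Since $\bar{\bG}$ has connected center, $\bar{\bB}$ is strictly regular, and Corollary~\ref{StrictlyRegularBlocksCor} furnishes $\bar{\chi}:=\varepsilon_{\bar{\bG}}\varepsilon_{\bar{\bT}}R_{\bar{\bT}}^{\bar{\bG}}(\bar{\theta})$ as the non-exceptional character of $\bar{\bB}$. For $\bB$, taking $\theta\in\Irr(T)$ and $s\in G^{*}$ in duality as in Lemma~\ref{GeneralizedRegularBlocks}, one decomposes $\chi:=\varepsilon_{\bG}\varepsilon_{\bT}R_{\bT}^{\bG}(\theta)=\chi_{1}+\cdots+\chi_{m}$; the hypotheses of Lemma~\ref{StrictlyRegularBlocksCorCyclic} are met because $\bB$ is cyclic and nilpotent, so that lemma puts each $\chi_{i}$ into a distinct block as the unique $p$-rational character therein. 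Relabelling, $\chi_{1}\in\Irr(\bB)$ is the non-exceptional character of $\bB$.

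It remains to compute the sign sequences. By Remark~\ref{SignSequenceRemark} and Lemma~\ref{NewNotation}, I need $\sigma_{\chi_{1}}^{[l]}(t)=\sigma_{\bar{\chi}}^{[l]}(\bar{t})$ for $t$ a generator of $D$ and $\bar{t}:=\nu(t)$. Since $\bar{\theta}$ has $p'$-order and each $\bar{t}^{p^{j}}$ is a $p$-element, Lemma~\ref{SignAndOmegaInvariant} gives $\sigma_{\bar{\chi}}(\bar{t}^{p^{j}})=\omega_{\bar{\bG}}(\bar{t}^{p^{j}})$ and likewise $\sigma_{\chi}(t^{p^{j}})=\omega_{\bG}(t^{p^{j}})$ for all $0\leq j\leq l-1$. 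To carry this over to $\chi_{1}$, I would invoke the last assertion of Lemma~\ref{GeneralizedRegularBlocks}: $\chi_{i}(t^{p^{j}})$ is independent of $i$, whence $\chi_{1}(t^{p^{j}})=\chi(t^{p^{j}})/m$ has the same sign as $\chi(t^{p^{j}})$, provided $C_{\bG}(t^{p^{j}})$ is an $F$-stable Levi subgroup of $\bG$. I expect verifying this Levi condition to be the main technical obstacle; the plan is to use that $C_{\bar{\bG}}(\bar{t}^{p^{j}})$ is automatically a Levi (as a product of block-diagonal centralizers in the $\GL$-factors) and pull it back along $\nu$ via Lemma~\ref{RelativeRankAndIsogenies}(c), combined with connectedness of $C_{\bG}(t^{p^{j}})$ inherited from $|\!\ker(\nu)|\leq 2$ and $p$ odd. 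Finally, $\omega_{\bG}(t^{p^{j}})=\omega_{\bar{\bG}}(\bar{t}^{p^{j}})$ follows from Lemma~\ref{RelativeRankAndIsogenies}(d) applied to $\nu$ and to the restricted isogeny $C_{\bG}^{\circ}(t^{p^{j}})\to C_{\bar{\bG}}(\bar{t}^{p^{j}})$ (which is surjective since its target is connected), combined with Definition~\ref{OmegaInvariant}. Assembling these equalities yields $\sigma_{\chi_{1}}^{[l]}(t)=\sigma_{\bar{\chi}}^{[l]}(\bar{t})$, and Lemma~\ref{NewNotation} concludes $W(\bB)\cong W(\bar{\bB})$.
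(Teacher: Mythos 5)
Your proposal addresses only the last assertion of the lemma, namely that $W(\bB)\cong W(\bar{\bB})$. However, the lemma contains two further assertions that you take as given and do not prove: (1) that there exists an $F$-stable maximal torus~$\bT$ of~$\bG$ such that~$\bB$ is regular with respect to~$T$, and (2) that $\bar{T}=\nu(\bT)^{\bar F}$ is cyclic of order $q^n-\varepsilon^n$. These are ``Then''-statements of the lemma, not hypotheses. In the paper these are established via a substantial argument using the Cabanes--Enguehard classification of blocks by $e$-cuspidal pairs $(\bL,\chi')$: one shows $\bL=\bG$ using the cyclicity of $O_p(Z(L))$ and the shape of $e$-split Levis in $\GL_n^{\varepsilon}$, then uses the Jordan criterion (condition~(J) of \cite{CaEn99}) and $e$-cuspidal unipotent characters to deduce that $C^{\circ}_{\bG^*}(s)$ is a cyclic maximal torus of the required order. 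None of this appears in your write-up, so the proof has a genuine gap.

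This gap also renders your opening step circular: your argument that $D_1\leq Z(G)$ proceeds by observing that the unique element of order $p$ in the cyclic torus $\bar{T}$ of order $q^n-\varepsilon^n$ acts as a scalar --- but the cyclicity and order of $\bar{T}$ are exactly assertion~(2), which you have not established. (The paper instead deduces $p\mid|Z(G)|$ from the fact that $Z^{\circ}(\bar{\bG})^{\bar F}$ has order $q-\varepsilon$, via Lemma~\ref{RelativeRankAndIsogenies}(e), once the torus has been constructed.)

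Once one grants assertions (1) and (2), your treatment of the sign-sequence comparison is essentially the same as the paper's: identify the non-exceptional character of~$\bB$ via Lemma~\ref{StrictlyRegularBlocksCorCyclic}, pass from $\chi_1$ to $\chi=\sum\chi_i$ using the last part of Lemma~\ref{GeneralizedRegularBlocks}, compare $\omega_{\bG}$ and $\omega_{\bar{\bG}}$ via Lemma~\ref{RelativeRankAndIsogenies}(d) together with the connectedness of centralizers (which the paper gets from connectedness of $Z(\bG^*)$ and $Z(\bar{\bG}^*)$), and conclude via Lemma~\ref{SignAndOmegaInvariant} and Remark~\ref{SignSequenceRemark}. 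So the final portion is fine; the missing piece is the torus construction, which is in fact the technical heart of the lemma.
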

\begin{proof}
We will make use of the main results 
of~\cite{CaEn99}. Put $\bar{\bG}^* := \bar{\bG}$, and let~$\bG^*$ be a reductive 
group dual to~$\bG$. Let~$F$ be a Frobenius morphism of~$\bG^*$ such that the 
pairs $(\bG, F)$ and $(\bG^*,F)$ are in duality.
There is a dual isogeny $\nu^*: \bar{\bG}^* \rightarrow \bG^*$ satisfying
$F \circ \nu^* = \nu^* \circ \bar{F}$; see \cite[Proposition~$11.1.11$]{DiMi2}. 
By Lemma~\ref{RelativeRankAndIsogenies}(e) and \cite[$1.3.10$(c)]{GeMa} 
we have $\nu( Z^{\circ}( \bG ) ) = Z^{\circ}( \bar{\bG} ) = Z( \bar{\bG} )
= \nu( Z( \bG ) )$. 
Hence $Z( \bG ) = Z^{\circ}( \bG ) \cdot \ker( \nu )$, and thus
$|Z( \bG )/Z^{\circ}( \bG )| \leq 2$. By the analogous argument, $Z( \bG^* )$ is
connected. Thus 
$3 \in \Gamma( \bG, F) \cap \Gamma( \bar{\bG}, \bar{F})$ in the notation
of \cite[Definition-Notation~$4.3$]{CaEn99}, so that the results of
\cite{CaEn99} apply to $( \bG, F)$ and to $( \bar{\bG}, \bar{F})$ also 
in case $p = 3$; see \cite[Subsection~$5.2$]{CaEn99}. Notice that~$p$ is
good for~$\bG$, as~$\bG$ is of type~$A$.

Let~$e$ denote the order of $q$ modulo~$p$. Thus $e = 1$ if $p \mid q - 1$,
i.e.\ if $\varepsilon = 1$, and $e = 2$, if $p \mid q + 1$, i.e.\ if 
$\varepsilon = -1$. According to \cite[Theorem]{CaEn99}, there is an
$e$-cuspidal pair $(\bL, \chi')$ associated to~$\bG$ which determines~$\bB$.
This means that~$\bL$ is an $e$-split Levi subgroup of~$\bG$, and $\chi'$ is 
an $e$-cuspidal irreducible character of~$L$. Moreover,~$\chi'$ lies in a
Lusztig series of~$L$ determined by a $p'$-element.
Let~$\bM$ and $Z = O_p( Z( M ) )$ be as in 
\cite[Definition-Notation~$4.3$]{CaEn99}, respectively
\cite[Definition~$4.6$]{CaEn99}. Then $\bL = \bM$ by 
\cite[Proposition~$3.2$]{CaEn99} and thus $Z = O_p( Z( L ) )$. By 
\cite[Lemma~$4.16$]{CaEn99}, we may assume that $O_p( Z( L ) ) \leq D$. In 
particular, $O_p( Z( L ) )$ is cyclic.

Now $\bar{\bL} := \nu(\bL)$ is an $e$-split Levi subgroup of~$\bar{\bG}$ by
Lemma~\ref{RelativeRankAndIsogenies}(b). Furthermore, 
$O_p( Z( L ) ) \cong O_p( Z( \bar{L} ) )$, as 
$\nu( Z( \bL ) ) = Z( \bar{\bL} )$; see \cite[$1.3.10$(c)]{GeMa}. We also have
$\bar{L} \cong \GL_{n_1}^\varepsilon( q ) \times \cdots \times 
\GL_{n_c}^\varepsilon( q )$ with $\sum_{j= 1}^c n_j = n$. As 
$O_p( Z( \bar{L} ) )$ is cyclic, we get $c = 1$, thus $\bar{\bL} = \bar{\bG}$, 
and hence $\bL = \bG$.
In particular,~$\chi'$ is an $e$-cuspidal character of~$G$, and 
$\chi' \in \Irr( \bB )$. We now apply 
\cite[Theorem~$4.2$]{CaEn99} which shows that~$\chi'$ satisfies the Jordan 
criterion, i.e.\ Condition~(J) of \cite[Subsection~$1.3$]{CaEn99}.

Let $s \in G^*$ be a semisimple $p'$-element such that~$\chi'$ lies in the 
Lusztig series~$\mathcal{E}( G, s )$ and put $\bT^* := C_{\bG^*}( s )^\circ$. 
Then~$\bT^*$ is an $F$-stable Levi subgroup of~$\bG^*$, as the latter is a group 
of type~$A$. By Lemma~\ref{RelativeRankAndIsogenies}(c), there is
an $\bar{F}$-stable Levi subgroup~$\bar{\bT}^*$ of~$\bar{\bG}^*$ with 
$\nu^*( \bar{\bT}^* ) = \bT^*$.  

Condition~($\text{J}_2$) of \cite[Subsection~$1.3$]{CaEn99} implies 
that~$C_{G^*}^{\circ}(s)$ has an $e$-cuspidal unipotent character. 
Then~$\bar{T}^*$ has an $e$-cuspidal unipotent character by
Lemma~\ref{RelativeRankAndIsogenies}(f). In turn,~$\bar{\bT}^*$ is a maximal 
torus by \cite[Proposition~$2.9$]{BrouMaMi}. In particular,~$\bT^*$ is a
maximal torus.
Notice that~$\nu^*$
maps the Sylow $\Phi_e$-torus of~$\bar{\bT}^*$ to the Sylow $\Phi_e$-torus
of~$\bT^*$; see Lemma~\ref{RelativeRankAndIsogenies}(b). 
Condition~($\text{J}_1$) of \cite[Subsection~$1.3$]{CaEn99} and the
description of the maximal tori of $\GL_n^\varepsilon(q)$ now show
that~$\bar{T}^*$ is cyclic of order $q^n - \varepsilon^n$.

Let~$\bT$ denote an $F$-stable maximal torus of~$\bG$, dual to~$\bT^*$.
Then~$\bB$ is regular with respect to~$\bT$ by definition. Also, $\bar{\bT}
= \nu( \bT )$ is dual to~$\bar{\bT}^*$. Hence $\bT$ is cyclic of order 
$q^n - {\varepsilon}^n$. 
Let $\theta \in \Irr(T)$ such that the pairs $(\bT, \theta )$ and $(\bT^*, s )$ 
correspond via duality, and put
$\chi := \varepsilon_{\bT}\varepsilon_{\bG}R_{\bT}^{\bG}( \theta )$. 
Then $\chi'$ is an irreducible constituent of~$\chi$ by
Lemma~\ref{GeneralizedRegularBlocks}. 
Since $Z^{\circ}( \bar{\bG} )^{\bar{F}} = 
Z( \bar{\bG} )^{\bar{F}} = Z( \bar{G} )$ has order $q - \varepsilon$,
we conclude from Lemma \ref{RelativeRankAndIsogenies}(e) and our assumption
that $p \mid |Z( G )|$.
In particular,~$\bB$ is nilpotent, and so the hypotheses of
Lemma~\ref{StrictlyRegularBlocksCorCyclic} are satisfied. This then implies 
that~$\chi'$, being an element of $\mathcal{E}( G, s )$, is the non-exceptional 
character in~$\bB$.

Let~$\bar{\chi}$ denote the non-exceptional character of~$\bar{\bB}$.
By Corollary~\ref{StrictlyRegularBlocksCor} there is $\bar{\theta} \in
\Irr( \bar{T} )$ of $p'$-order, such that 
$\bar{\chi} = \varepsilon_{\bar{\bG}}\varepsilon_{\bar{\bT}}
R_{\bar{\bT}}^{\bar{\bG}}( \bar{\theta} )$.
Suppose that $|D| = p^l$ and let~$t$ be a generator of~$D$. Put 
$\bar{t} := \nu(t)$. As $Z( \bG^* )$ and $Z( \bar{\bG}^* )$ are connected,
the centralizers in~$\bG$, respectively in~$\bar{\bG}$, of the powers of~$t$, 
respectively of~$\bar{t}$, are connected. By \cite[Theorem~$1.3.10$(e)]{GeMa} 
and Lemma~\ref{RelativeRankAndIsogenies}(d) we get 
$\omega_{\bar{\bG}}^{[l]}( \bar{t} ) = \omega_{\bG}^{[l]}( t )$.
Then $\sigma_{\bar{\chi}}^{[l]}( \bar{t} ) = \sigma_{\chi}^{[l]}( t )
= \sigma_{\chi'}^{[l]}( t )$, where the first equality arises from
Lemma~\ref{SignAndOmegaInvariant}, and the second one from the last
statement of Lemma~\ref{GeneralizedRegularBlocks}. 
Using Remark~\ref{SignSequenceRemark}, we conclude that
$W( \bB ) \cong W( \bar{\bB} )$.
\end{proof}

\begin{cor}
\label{GLnCor}
Let $(\bG,F)$ be as in {\rm Case~$1$} of 
{\rm Table~\ref{ClassicalAlgebraicGroups}},
so that $G = \GL_n^{\varepsilon}(q)$. Suppose that $p \mid q - \varepsilon$,
and let~$\bB$ be a $p$-block of~$G$ with a cyclic defect group~$D$.

Then~$\bB$ is regular with respect to a cyclic maximal torus~$T$ of~$G$ of order
$q^n - \varepsilon^n$. In particular, we may assume that~$D$ is a Sylow
$p$-subgroup of~$T$. 

Let~$a$ be the positive integer such that $p^a$ is the highest power of~$p$
dividing $q - \varepsilon$, and define the non-negative integer~$a'$ by 
$n = m p^{a'}$ with $p \nmid m$. Then $|D| = p^{a+a'}$.
\end{cor}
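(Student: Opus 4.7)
The plan is to apply Lemma~\ref{BlocksAndIsogeny} with $\bG=\bar{\bG}=\GL_n(\mathbb{F})$ and $\nu$ the identity map. The hypotheses are trivially satisfied in this setting: $\bG$ is a one-factor direct product of copies of $\GL_n(\mathbb{F})$, the identity isogeny has kernel of order $1\leq 2$, and the assumption $p\mid q-\varepsilon$ is given. The conclusion of that lemma furnishes an $F$-stable maximal torus~$\bT$ of~$\bG$ such that $\bB$ is regular with respect to~$T$, and (quoting that lemma) by Lemma~\ref{GeneralizedRegularBlocks} we may assume $D$ is a Sylow $p$-subgroup of~$T$. Moreover, since $\bar{\bT}=\nu(\bT)=\bT$, the lemma asserts that $T$ is cyclic of order $q^n-\varepsilon^n$. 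This proves the first two assertions.

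It then remains to compute $|D|=|T|_p=|q^n-\varepsilon^n|_p$. I would split into three cases according to $\varepsilon$ and the parity of~$n$, using the lifting-the-exponent lemma (LTE) and keeping in mind that $p$ is odd throughout. For $\varepsilon=1$ we have $p\mid q-1$, so LTE gives
\[
|q^n-1|_p = |q-1|_p \cdot |n|_p = p^{a+a'}.
\]
For $\varepsilon=-1$ and $n$ odd we have $p\mid q+1$, and the odd-exponent version of LTE gives
\[
|q^n+1|_p = |q+1|_p \cdot |n|_p = p^{a+a'}.
\]
For $\varepsilon=-1$ and $n$ even, I would rewrite $q^n-1=(q^2)^{n/2}-1$; since $p$ is odd and $p\mid q+1$ forces $p\nmid q-1$, we have $|q^2-1|_p=|q+1|_p=p^a$, so LTE yields
\[
|q^n-1|_p = |q^2-1|_p \cdot |n/2|_p = p^a \cdot |n|_p = p^{a+a'},
\]
using that $|n/2|_p=|n|_p$ since $p$ is odd. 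In all three cases we obtain $|D|=p^{a+a'}$.

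There is no significant obstacle here: the representation-theoretic content is carried entirely by Lemma~\ref{BlocksAndIsogeny}, and the residual task is a standard $p$-adic valuation computation that is handled cleanly by LTE together with the oddness of~$p$.
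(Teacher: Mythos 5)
Your proposal is correct and follows exactly the paper's route: apply Lemma~\ref{BlocksAndIsogeny} with $\bG=\bar{\bG}$ and $\nu$ the identity, then read off $|D|=|q^n-\varepsilon^n|_p$. The paper leaves the $p$-adic valuation unspelled; your lifting-the-exponent case analysis correctly fills in that step.
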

\begin{proof}
The first part follows from Lemma~\ref{BlocksAndIsogeny}, if we let
$\bG = \bar{\bG}$ and~$\nu$ the identity map. The statement on~$|D|$ follows 
from this.
\end{proof}

The statements in Corollary~\ref{GLnCor} were first proved by Fong and 
Srinivasan in~\cite{fs2}. Notice however, that their notation differs from the 
one used here. Most notably, the group we denote by $\GU_n( q )$ here, is 
denoted by $U(n, \mathbb{F}_{q^2} )$ or $U(n,q^2)$ in~\cite{fs2}. 

To make use of the last part of Lemma~\ref{BlocksAndIsogeny}, we need to 
investigate when $\GL_n^{\varepsilon}(q)$ has a regular block with respect to
the cyclic maximal torus~$T$ of order~$q^n - \varepsilon^n$. We may realize
$G = \GL_n^{\varepsilon}(q)$ as $G = \bG^F$ with $\bG = \GL_n( \mathbb{F} )$ 
and some suitable Steinberg morphism~$F$ of~$\bG$. As~$\bG$ has connected 
center, the notions of regular and strictly regular blocks for~$G$ coincide; see 
the remarks following Definition~\ref{RegularBlocksDefinition}. We may also 
identify~$G$ with its dual group and~$T$ with its dual torus~$T^*$. Then a 
regular block of~$G$ with respect to~$T$ exists, if and only if~$T$ contains a 
regular $p'$-element; see Lemma~\ref{RegularElementsAndRegularEmbeddings}.
The last part of the following lemma is only used at a later stage of our work.

\begin{lem}
\label{ElementsInGeneralPosition}
Let $\bG := \GL_n( \mathbb{F} )$ for some $n > 2$, and let~$F$ be a Steinberg
morphism of~$\bG$ such that $G = \bG^F = \GL_n^{\varepsilon}( q )$.
Assume that $p \mid q - \varepsilon$ and that $(q,n) \neq (2,3)$.

Then there is a prime~$f$ with $f \mid q^n - \varepsilon^n$, but
$f \nmid q^j - \varepsilon^j$ for all $1 \leq j < n$.

Let~$T$ be a cyclic maximal torus of~$G$ of order $q^n - \varepsilon^n$, and let 
$s \in T$ be of order~$f$. Then~$s$ is regular in~$G$ and the image~$s'$ of~$s$ 
in~$\PGL_n^{\varepsilon}(q)$ is strictly regular; i.e.\ 
$C_{\PGL_n^{\varepsilon}(q)}( s' )$ is a maximal torus 
of~$\PGL_n^{\varepsilon}(q)$; see 
{\rm Definition~\ref{RegularBlocksDefinition}(a)}.
\end{lem}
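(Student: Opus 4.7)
The plan is to produce a suitable prime $f$ via Zsygmondy-type arguments, then to deduce the regularity of $s$ in $G$ through an eigenvalue computation on the natural module, and finally to transfer the statement to $\PGL_n^{\varepsilon}(q)$ via a coprimality argument. To produce $f$, I would split on $\varepsilon$ and, in the unitary case, on $n$ modulo~$4$. For $\varepsilon = 1$, any primitive prime divisor of $q^n - 1$ serves; by Zsygmondy's theorem such a prime exists for $n > 2$ except when $(q,n) = (2,6)$, which is ruled out because $p \mid q - 1$ with $p$ odd forces $q \geq 4$. For $\varepsilon = -1$ with $n$ odd, I would take a primitive prime divisor $f$ of $q^{2n} - 1$: then $f \mid q^n + 1$ and $f \nmid q^{2j} - 1$ for $j < n$, so $f$ divides neither $q^j - 1$ nor $q^j + 1$ for any $j < n$. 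Zsygmondy supplies such an $f$ unless $(q, 2n) = (2,6)$, i.e.\ the excluded pair $(q,n) = (2,3)$. For $\varepsilon = -1$ with $n \equiv 0 \pmod{4}$, a primitive prime divisor of $\Phi_n(q)$ has $\mathrm{ord}_f(q) = n$; then $q^j \equiv -1 \pmod{f}$ for odd $j < n$ would force $n \mid 2j$ with $4 \mid n$ but $4 \nmid 2j$, a contradiction. For $\varepsilon = -1$ with $n \equiv 2 \pmod{4}$ and $n > 2$, a primitive prime divisor of $\Phi_{n/2}(q)$ works: since $n/2 \geq 3$ is odd, $q$ has odd order $n/2$ modulo $f$, and $q^j \equiv -1 \pmod{f}$ would force $n/2 \mid 2j$ yet $n/2 \nmid j$, impossible when $n/2$ is odd.

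With $f$ in hand, I would realize $T$ as an $F$-stable maximal torus of $\bG = \GL_n(\mathbb{F})$ on which the Frobenius acts as an $n$-cycle combined with the $\varepsilon q$-power map on characters. Then $s \in T$ of order $f$ acts on the standard module $\bV = \mathbb{F}^n$ with eigenvalues $\zeta, \zeta^{\varepsilon q}, \ldots, \zeta^{(\varepsilon q)^{n-1}}$ for a primitive $f$-th root of unity $\zeta$. These are pairwise distinct because $(\varepsilon q)^d - 1 = \varepsilon^d (q^d - \varepsilon^d)$, so $f \mid (\varepsilon q)^d - 1$ if and only if $f \mid q^d - \varepsilon^d$, which fails for $1 \leq d < n$ by the primitivity of $f$. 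Hence $C_{\bG}(s)$ is a maximal torus of $\bG$, so $s$ is regular in $\bG$; and since centralizers of semisimple elements in $\GL_n(\mathbb{F})$ are connected, $C_G(s) = T$.

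For strict regularity of $s'$ in $\bar{G} := \PGL_n^{\varepsilon}(q) = G/Z(G)$, note that $|Z(G)| = q - \varepsilon$, and the primitivity of $f$ with $n > 1$ forces $f \nmid q - \varepsilon$, hence $\gcd(|s|, |Z(G)|) = 1$. If $g \in G$ satisfies $gsg^{-1} \in s Z(G)$, write $gsg^{-1} = sz$ with $z \in Z(G)$; since $s$ and $z$ commute, $|sz| = \mathrm{lcm}(|s|, |z|) = f|z|$, and the equality $|sz| = |s| = f$ forces $z = 1$, so $g \in C_G(s) = T$. Therefore $C_{\bar{G}}(s') = T/Z(G)$, a maximal torus of $\bar{G}$. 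The main obstacle will be the uniform treatment of the Zsygmondy step across the four sub-cases, particularly the unitary case with $n \equiv 2 \pmod 4$; once $f$ is secured, the eigenvalue and coprimality arguments are routine.
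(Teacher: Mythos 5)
Your proof is correct and closely tracks the paper's in overall structure, but the final step is argued by a genuinely more elementary route.

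For the existence of $f$, you and the paper perform exactly the same four-way case split (on $\varepsilon$ and on $n \bmod 4$) and apply Zsygmondy. The paper packages this via the auxiliary integer $m$ (with $m = n$, $2n$, $n$, or $n/2$ in the respective cases) and then asserts without details that $f \nmid q^j - \varepsilon^j$ for $j < n$; you spell those verifications out, which is useful but not a different argument. For the regularity of $s$ in $\bG$, you observe directly that the $n$ eigenvalues $\zeta^{(\varepsilon q)^i}$ are pairwise distinct, whereas the paper argues that any maximal torus of $G$ containing $s$ is conjugate to $T$; these are two presentations of the same underlying fact and both give $C_{\bG}(s) = \bT$, hence $C_G(s) = T$ by connectedness of centralizers in $\GL_n$. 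The genuine divergence is the strict regularity of $s'$ in $\PGL_n^\varepsilon(q)$: the paper invokes the theory of component groups (that $A_{\PGL_n(\mathbb{F})}(s')^F$ embeds into $(Z(\SL_n)/Z^\circ(\SL_n))^F$ of order dividing $\gcd(n, q-\varepsilon)$, while every prime dividing its order must divide $|s'| = f$, which is coprime to $q - \varepsilon$), whereas you compute $C_{\bar{G}}(s')$ by hand: $gsg^{-1} = sz$ with $z \in Z(G)$ forces $z = 1$ because $\gcd(f, |Z(G)|) = 1$ and $s, z$ lie in the cyclic group $T$. Your route is more elementary and avoids the Bonnaf\'e component-group machinery cited in the paper; it does implicitly use that $T/Z(G)$ is a maximal torus of $\PGL_n^\varepsilon(q)$, which follows from Lang's theorem applied to the connected center and is worth stating, but this is a minor gap of exposition rather than of substance.
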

\begin{proof}
Define the integer~$m$ by
$$m := 
\begin{cases}
n, & \text{if\ } \varepsilon = 1, \\
2n, & \text{if\ } \varepsilon = -1 \text{\ and\ } n \text{\ is odd}, \\
n, & \text{if\ } \varepsilon = -1 \text{\ and\ } 4 \mid n, \\
n/2, & \text{if\ } \varepsilon = -1 \text{\ and\ } n \equiv 2\,\,(\mbox{\rm mod}\,\,4). 
\end{cases}
$$
Then $m \geq 3$. If $q = 2$, then $\varepsilon = -1$, as~$p$ is a prime dividing
$q - \varepsilon$. Hence $(q,m) = (2,6)$ corresponds to the case
$(q,n) = (2,3)$, which is excluded. It follows that there exists a
primitive prime divisor~$f$ of $q^m - 1$, i.e.~$f$ is a prime with
$f \mid q^{m} - 1$ but $f \nmid q^j - 1$ for $1 \leq j < m$;
see \cite[Theorem~IX.$8.3$]{HuBl82}.
It is easy to check that $f \nmid q^j - \varepsilon^j$ for all $1 \leq j < n$.

Clearly~$s$ is regular in~$\bG$, as every maximal torus of~$G$ containing~$s$ is 
$G$-conjugate to~$T$. As $|A_{\PGL_n( \mathbb{F} )}( s' )^F|$ divides 
$\gcd( q - \varepsilon, n )$, the statement about~$s'$ follows 
from~\ref{ComponentGroups}.
\end{proof}

We can now formulate the main result of this section.
\begin{thm}
\label{MainCorReduction}
Let~$\bG$ and~$F$ be as in one of the Cases $2$--$4$ of 
{\rm Table~\ref{ClassicalAlgebraicGroups}}. Let~$\bB$ be a $p$-block of~$G$ 
with a cyclic defect group~$D$. Assume that $W( \bB ) \not\cong k$. 

Then there is $\varepsilon \in \{ -1, 1\}$, an integer~$n' \geq 3$, a power~$q'$ 
of~$q$ with $p \mid q' - \varepsilon$, and a block~$\bB'$ 
of~$\GL^{\varepsilon}_{n'}(q')$, regular with respect to the cyclic maximal 
torus~$T'$ of~$\GL^{\varepsilon}_{n'}(q')$ of order 
${q'}^{\,n'} - \varepsilon^{n'}$, such that~$D$ is isomorphic to a Sylow 
$p$-subgroup~$D'$ of~$T'$, and $W( \bB ) \cong W( \bB' )$, if~$D$ and $D'$ are 
identified.
\end{thm}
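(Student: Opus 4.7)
The plan is to chain together the two main tools highlighted in the introduction: the reduction of Lemma~\ref{NoEigenvalueIs1} and the Brauer correspondence at~$D_1$, eventually bringing Lemma~\ref{BlocksAndIsogeny} to bear. \emph{First reduction.} I would apply Lemma~\ref{NoEigenvalueIs1} to pass from~$\bB$ to a block of a smaller classical subgroup $G(V',\kappa')$ on which $\bar{D} := \nu(D)$ acts without nontrivial fixed vectors, with the same defect group~$D$ and unchanged invariant~$W$. This is legitimate since $p$ odd and $|\!\ker \nu| \le 2$ force $\bar D \neq 1$, whence $V^0 \subsetneq V$. Relabel and work henceforth inside this restricted situation, with algebraic envelope still denoted~$\bG$.

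Let $t_1$ generate $D_1$ and $\bar{t}_1 := \nu(t_1)$. By the last statement of Lemma~\ref{ShapeOfMinimalPolynomial}, $\bar{t}_1$ likewise has no nontrivial fixed vector. Lemma~\ref{NonTypeAClassicalGroups0} then produces integers $d, m \ge 1$ with $\dim V' = 2dm$ and a sign $\varepsilon \in \{-1,1\}$ such that $C_{\bar{G}}(\bar{t}_1) \cong \GL_m^{\varepsilon}(q^d)$, while Remark~\ref{RemNonTypeAClassicalGroups0} guarantees $p \mid q^d - \varepsilon$. On the algebraic side, Lemma~\ref{NonTypeAClassicalGroups} (trivial in Case~$2$) produces an isogeny $\nu_1 : C_{\bG}(t_1) \twoheadrightarrow C_{\bar{\bG}}(\bar{t}_1)$ with kernel of order at most~$2$; moreover, $C_{\bG}(t_1)$ is connected because its (central) kernel lies in any maximal torus through~$t_1$, hence in $C_{\bG}(t_1)^\circ$.

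\emph{Second reduction.} Let $\bc$ be a Brauer correspondent of $\bB$ in $C_G(D_1) = C_G(t_1)$; then $\bc$ has cyclic defect group~$D$ and $W(\bc) \cong W(\bB)$ by \cite[Corollary~$4.4$]{HL24}. Now apply Lemma~\ref{BlocksAndIsogeny} with its data $(\bG, \bar{\bG}, \nu, q, n)$ taken to be $(C_{\bG}(t_1), C_{\bar{\bG}}(\bar{t}_1), \nu_1, q^d, m)$ and its block $\bB$ taken to be~$\bc$. Its conclusion yields a regular block $\bB'$ of $\GL_m^\varepsilon(q^d)$ with respect to the cyclic torus~$T'$ of order $q^{dm} - \varepsilon^m$, whose Sylow $p$-subgroup $D' := \nu_1(D)$ is identified with~$D$, and for which $W(\bc) \cong W(\bB')$. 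Existence of such a~$\bB'$ with defect group matching~$D$ is secured by Corollary~\ref{GLnCor}. Setting $q' := q^d$ and $n' := m$ produces all the structural properties demanded by the statement, together with $W(\bB) \cong W(\bB')$.

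The remaining point, and the one that is not a formal consequence of the preceding tools, is the inequality $n' \geq 3$. If $m = 1$ then $\GL_1^\varepsilon(q^d)$ is abelian, so $W(\bB') \cong k$, contradicting $W(\bB) \not\cong k$; if $m = 2$, one invokes the analysis of cyclic blocks of $\GL_2^\varepsilon(q^d)$ carried out in Section~$4$ below, which again forces $W(\bB') \cong k$ and hence the same contradiction. The main technical obstacle in the reduction itself lies in verifying the hypotheses of Lemma~\ref{BlocksAndIsogeny} for the centralizer pair: namely, connectedness and reductivity of $C_{\bG}(t_1)$, the direct-product structure of $C_{\bar\bG}(\bar t_1)$, and the kernel bound $|\ker \nu_1| \leq 2$. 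All of these are supplied by the combination of Lemmas~\ref{NonTypeAClassicalGroups0} and~\ref{NonTypeAClassicalGroups}.
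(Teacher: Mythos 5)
Your proposal follows the same architecture as the paper's proof: reduce via Lemma~\ref{NoEigenvalueIs1}, pass to a Brauer correspondent $\bc$ in $C_G(t_1)$, identify $C_{\bar{G}}(\bar{t}_1) \cong \GL_m^\varepsilon(q^d)$ and the isogeny from Lemmas~\ref{NonTypeAClassicalGroups0} and~\ref{NonTypeAClassicalGroups}, and feed these into Lemma~\ref{BlocksAndIsogeny}. The supplementary details you supply (connectedness of $C_{\bG}(t_1)$ via the kernel sitting in a torus, and the explicit use of the last statement of Lemma~\ref{ShapeOfMinimalPolynomial} to transfer the no-fixed-vector condition from $\bar{t}$ to $\bar{t}_1$) are correct.

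However, there is a genuine gap in the existence step. The final part of Lemma~\ref{BlocksAndIsogeny} is conditional: it gives $W(\bc) \cong W(\bar{\bB})$ \emph{provided that} a regular $p$-block $\bar{\bB}$ of $\GL_{n'}^\varepsilon(q')$ with respect to $\bar{T}$ exists. You assert that ``existence of such a $\bB'$\ldots is secured by Corollary~\ref{GLnCor}'', but that corollary says nothing of the kind: it only asserts that any given cyclic block of a general linear or unitary group is regular with respect to the cyclic maximal torus, not that such a block exists. Existence, in the regime $n' \geq 3$, must come from Lemma~\ref{ElementsInGeneralPosition}, which produces a regular $p'$-element in $T'$ via a Zsigmondy prime --- and that lemma has the genuine exception $(q',n') = (2,3)$. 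Your proof never addresses this configuration. The paper closes it by observing that this forces $G = \Sp_6(2)$, $p=3$, and then checking directly (with GAP) that the cyclic $3$-blocks there have defect at most one, so $W(\bB) \cong k$ anyway. Without some version of this step, the proof is incomplete.

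On the $n' \geq 3$ point, your route differs from the paper's. The paper argues directly that $q' - \varepsilon$ divides $|Z(C_G(t_1))|$ by Lemma~\ref{RelativeRankAndIsogenies}(e), so for $n' \leq 2$ one would have $D \leq Z(C_G(t_1))$ and hence $W(\bc) \cong k$ --- a short argument that sidesteps Section~$4$ entirely and, importantly, does not presuppose the existence of $\bB'$. You instead deduce $W(\bB') \cong k$ for $n' = 1$ from abelianness and for $n' = 2$ by forward-referencing Corollary~\ref{GLNQdeq1Source}. This is logically sound (there is no circularity, as Section~$4$ does not depend on Theorem~\ref{MainCorReduction}), but it is heavier, it introduces an awkward ordering of the material, and it still requires the unaddressed existence of $\bB'$ before the contradiction can be extracted. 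The paper's center-based argument is preferable on both counts.
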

\begin{proof}
Use the notation of the previous subsections. Let $t_1 \in D$ denote an element 
of order~$p$ and put $\bar{t}_1 = \nu(t_1)$. By Lemma~\ref{NoEigenvalueIs1}, we 
may assume that~$\bar{t}_1$ has no non-trivial fixed point on~$V$. Let us assume 
this from now on. By Lemma~\ref{NonTypeAClassicalGroups} there is an isogeny 
$\nu: C_{\bG}( t_1 ) \rightarrow C_{\bar{\bG}}( \bar{t}_1 )$ with kernel of 
order at most two. By Lemma~\ref{NonTypeAClassicalGroups0}, the group 
$C_{\bar{\bG}}( \bar{t}_1 )$ is isomorphic to a direct product of general linear 
groups of degree~$m$, and $C_{\bar{G}}( \bar{t}_1 ) \cong 
\GL_m^{\varepsilon}(q^d)$, for some $\varepsilon \in \{ - 1, 1 \}$, where
$m, d$ satisfy $2md = n$. Put $n' := m$ and $q' := q^d$. Then 
$p \mid q' - \varepsilon$ by Remark~\ref{RemNonTypeAClassicalGroups0}. Thus the
hypotheses of Lemma~\ref{BlocksAndIsogeny} hold for~$\nu$,~$n'$ and~$q'$.

In order to find~$W(\bB)$ we may replace~$G$ by $C_G(t_1)$ and~$\bB$ by a Brauer
correspondent~$\bc$ of~$\bB$ in $C_G( t_1 )$, but keeping~$D$. 
Lemma~\ref{BlocksAndIsogeny} implies that~$D$ is a Sylow $p$-subgroup of a 
torus of~$C_G(t_1)$ of order ${q'}^{\,n'} - \varepsilon^{n'}$. Recall that
$q' - \varepsilon$ divides $Z( C_G( t_1 ) )$ by 
Lemma~\ref{RelativeRankAndIsogenies}(e). Thus $n' \leq 2$ would imply 
$D \leq Z( C_G( t_1 ) )$, and then $W( \bc ) \cong k$ by 
\cite[Lemma~$3.6$(b)]{HL24}, a case we have excluded. Hence $n' > 2$.

The claim then follows from Lemmas~\ref{ElementsInGeneralPosition} 
and~\ref{BlocksAndIsogeny}, unless $(q',n') = (2,3)$. In the latter case, 
$G = \Sp_6(2)$ and $p = 3$. But the cyclic $3$-blocks of~$\Sp_6(2)$ 
have defect~$1$ or~$0$; see~\cite{GAP04}. Then $W( \bB ) \cong W( \bc ) \cong
k$, once more by \cite[Lemma~$3.6$(b)]{HL24}, contrary to our hypothesis.
\end{proof}

\section{The general linear and unitary groups}
\label{TheGeneralLinearAndUnitaryGroups}
As a next step we consider the general linear and unitary groups. Fix a sign
$\varepsilon \in \{ 1, -1 \}$ and let~$\bG$ and $F = F_{\varepsilon}$ be as in 
Case~$1$ of Table~\ref{ClassicalAlgebraicGroups}. Then $G = \bG^F = 
\GL_n^{\varepsilon}(q)$. We may and will identify~${\bG}$ with its dual
group~${\bG}^*$. 

By Corollary~\ref{GLnCor}, if~$p$ is an odd prime with 
$p \mid q - \varepsilon$, a cyclic $p$-block of~${G}$ is regular with respect 
to the cyclic maximal torus~$T$ of~${G}$ order $q^n - \varepsilon^n$. If 
$n = 1$, the principal block of~$G$ has this property. Suppose that $n = 2$ and 
$G \neq \GU_2(2)$. Then if $p \mid q - \varepsilon$, an element 
$\theta \in \Irr(T)$ of order $(q^2-1)_{p'}$ is in general position, so that 
regular blocks with respect to~$T$ also exist in this case. Moreover, if 
$n \geq 3$, such regular blocks exist, unless $G = \GU_3(2)$ and $p = 3$; see 
Lemma~\ref{ElementsInGeneralPosition}.

\subsection{The crucial case}
\label{TheCrucialCase}
Throughout this subsection we let~$p$ be an odd prime satisfying
$p \mid q - \varepsilon$, and we denote by~$a$ the positive integer such
that~$p^a$ is the highest power of~$p$ dividing~$q - \varepsilon$.
We will use the parameter~$\delta$ in the meaning of 
Subsection~\ref{TheRelevantClassicalGroups}. Thus, $\delta = 1$, if
$\varepsilon = 1$, and $\delta = 2$, if $\varepsilon = -1$.

The following lemma on certain $p$-elements of~$G$ will be used several times.
The terms minimal polynomial and eigenvalue refer to the action of~$G$ on its
natural vector space $V = \mathbb{F}^n_{q^{\delta}}$.

\begin{lem}
\label{MinimalPolynomialAndDeterminant}
Let $t \in G$ be semisimple and let~$\Gamma$ denote the minimal polynomial 
of~$t$ over $\mathbb{F}_{q^{\delta}}$. Let~$T$ denote a cyclic maximal torus 
of~$G$ of order $q^n - \varepsilon^n$.

{\rm (a)} If~$\Gamma$ is irreducible, the minimal polynomial of~$t^j$ is 
irreducible for all integers~$j$.

{\rm (b)} Suppose that $t \in T$. Then~$\Gamma$ is irreducible, unless 
$\varepsilon = -1$, $n$~is even and $\Gamma = \Delta\Delta^{\dagger}$ for a 
monic irreducible polynomial $\Delta$ with $\Delta \neq \Delta^{\dagger}$.

{\rm (c)} Let~$\Delta$ be a monic irreducible polynomial 
over~$\mathbb{F}_{q^{\delta}}$, and let $\zeta \in \mathbb{F}$ be a root 
of~$\Delta$. If~$\zeta$ is of $p$-power order, then
$\deg(\Delta) = p^b$ with
$$
b = \begin{cases} 0, & \text{\ if\ } |\zeta| \leq p^a \\
                    a', & \text{\ if\ } |\zeta| = p^{a + a'} \text{\ with\ } a' \geq 0.
\end{cases}
$$

{\rm (d)} If $t \in T$ is of $p$-power order, then~$\Gamma$ is irreducible.

{\rm (e)} Suppose that~$\Gamma$ is irreducible and put $d := \deg( \Gamma )$.
Thus $n = n' d$ for some positive integer~$n'$. Let~$\zeta \in \mathbb{F}$ be 
an eigenvalue of~$t$. Then
$$\det(t) = \zeta^{n'(q^{\delta d} -1)/(q^{\delta}-1)}.$$
\end{lem}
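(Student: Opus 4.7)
The plan is to handle all five parts using a common framework: identifying the eigenvalues of $t$ on $V\otimes\mathbb{F}$ and analyzing the Frobenius action on them. Parts (a), (c), (e) will be direct, while (b) and (d) require an explicit description of the torus $T$.

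For (a), if $\Gamma$ is irreducible with root $\zeta$ of degree $d$, then the eigenvalues of $t$ (without multiplicity) form the single $\mathbb{F}_{q^{\delta}}$-Frobenius orbit $\{\zeta^{q^{i\delta}} : 0 \leq i < d\}$. Since raising to the $j$-th power commutes with Frobenius, the distinct eigenvalues of $t^j$ form the single Frobenius orbit of $\zeta^j$, so the minimal polynomial of $t^j$ is irreducible. For (c), $\deg(\Delta)=[\mathbb{F}_{q^\delta}(\zeta):\mathbb{F}_{q^\delta}]$ is the least positive integer $d$ with $|\zeta|\mid q^{\delta d}-1$; lifting-the-exponent applied to $p\mid q^\delta-1$ gives $v_p(q^{\delta d}-1)=a+v_p(d)$, so the least such $d$ is $p^{\max(0,\,v_p(|\zeta|)-a)}$, matching the stated value of $b$. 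For (e), the roots of the irreducible $\Gamma$ are the $d$ Frobenius conjugates of $\zeta$, each of multiplicity $n'=n/d$ in the characteristic polynomial of $t$; multiplying them gives $\det(t)=\zeta^{n'(1+q^\delta+\cdots+q^{(d-1)\delta})}=\zeta^{n'(q^{\delta d}-1)/(q^\delta-1)}$.

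For (b) I would use an explicit realization of $T$ as the $F$-fixed points of a suitably twisted diagonal torus of $\bG=\GL_n(\mathbb{F})$. In the case $\varepsilon=1$, identifying $T\cong\mathbb{F}_{q^n}^*$ acting on $V\cong\mathbb{F}_{q^n}$ over $\mathbb{F}_q$ shows the eigenvalues of $t$ form a single $\mathbb{F}_q$-Galois orbit, forcing $\Gamma$ irreducible. For $\varepsilon=-1$ with $n$ odd, the analogous identification $T=\{\zeta\in\mathbb{F}_{q^{2n}}^* : \zeta^{q^n+1}=1\}$ acting on $V\cong\mathbb{F}_{q^{2n}}$ as an $n$-dimensional $\mathbb{F}_{q^2}$-space yields a single $\mathbb{F}_{q^2}$-Galois orbit, again making $\Gamma$ irreducible. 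For $\varepsilon=-1$ with $n$ even, the $F$-twist produces the eigenvalue set $\{\zeta^{(-q)^i} : 0\leq i<n\}$ for some $\zeta\in\mathbb{F}_{q^n}^*$; this is the union of the $\mathbb{F}_{q^2}$-Galois orbits of $\zeta$ and of $\zeta^{-q}$, so $\Gamma$ is either irreducible or of the form $\Delta\Delta^{\dagger}$ with $\Delta\neq\Delta^{\dagger}$, proving (b).

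For (d), only the residual case $\varepsilon=-1$, $n$ even, $|\zeta|=p^c$ needs treatment. The key observation is that in $(\mathbb{Z}/p^c\mathbb{Z})^*$ both $-q$ and $q^2$ satisfy $v_p(-q-1)=v_p(q^2-1)=a$; hence they are either both trivial (if $c\leq a$) or both generators of the cyclic subgroup $U_a=(1+p^a\mathbb{Z})/(p^c\mathbb{Z})$ (if $c>a$). In either scenario $\langle -q\rangle=\langle q^2\rangle$, so the $(-q)$-orbit of $\zeta$ coincides with the $q^2$-orbit, the eigenvalue set is a single $\mathbb{F}_{q^2}$-Galois orbit, and $\Gamma$ is irreducible. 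The main technical hurdle will be the clean $F$-twisted description of $T$ in the three subcases of (b); once that framework is fixed, the rest is essentially combinatorial bookkeeping.
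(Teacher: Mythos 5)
Your proposal is correct, and while it shares the paper's underlying strategy of analyzing eigenvalues of elements of the cyclic torus as Galois orbits, it differs from the paper's proof at two points. For (a) and (b) the paper argues more structurally: (a) via the observation that $\mathbb{F}_{q^{\delta}}[t]$ is a field (so $\mathbb{F}_{q^{\delta}}[t^j]$ is too), and (b) by noting that for $\varepsilon=1$ or $n$ odd the torus $T$ lies in a Coxeter torus of $\GL_n(q^{\delta})$, so $t$ is a power of an irreducibly acting element and (a) applies, while for $\varepsilon=-1$, $n$ even it views $T$ as a Coxeter torus of a Levi $L\cong\GL_{n/2}(q^2)$ acting on two totally isotropic summands $V_1\oplus V_2$, one action twisted; your explicit $F$-twisted eigenvalue description $\{\zeta^{(-q)^i}\}$ reaches the same dichotomy more computationally, and the standard-model verification you flag as the remaining hurdle is indeed routine. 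The more genuine divergence is in (d): the paper deduces it from (b), (c) and the separate Lemma~\ref{UnitaryDelta} (if $\Gamma=\Delta\Delta^{\dagger}$ were reducible, then $\deg\Delta$ would be an odd $p$-power by (c), forcing $\Delta=\Delta^{\dagger}$, a contradiction), whereas you bypass that auxiliary lemma entirely by showing $\langle -q\rangle=\langle q^2\rangle$ in $(\mathbb{Z}/p^c\mathbb{Z})^*$, using that $-q$ and $q^2$ are both congruent to $1$ modulo $p^a$ but not modulo $p^{a+1}$, so the $(-q)$-orbit of any eigenvalue equals its $\mathbb{F}_{q^2}$-Galois orbit; this is a clean, self-contained replacement (and in effect reproves Lemma~\ref{UnitaryDelta} in the case needed). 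Parts (c) (your lifting-the-exponent computation is exactly the paper's statement about $p$-parts of $q^{\delta p^{b}}-1$) and (e) coincide with the paper's arguments.
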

\begin{proof}
(a) This is certainly well known. For convenience, we sketch a proof.
As~$\Gamma$ is irreducible, the subalgebra $\mathbb{F}_{q^{\delta}}[t]$ of the 
full matrix algebra is a field. It is, in particular, a finite extension 
of~$\mathbb{F}_{q^{\delta}}$. Thus~$t^j$ is algebraic 
over~$\mathbb{F}_{q^{\delta}}$, and so $\mathbb{F}_{q^{\delta}}[t^j]$ is a 
field. This implies the claim.

(b) Suppose that $\varepsilon = 1$ or that~$n$ is odd. Then~$T$ lies in a 
Coxeter torus of~$\GL_n( q^{\delta} )$. Hence~$t$ is a power of an element 
of~$\GL_n( q^{\delta} )$ acting irreducibly on~$V$. 
Thus~$\Gamma$ is irreducible by~(a). 

Now suppose that $\varepsilon = -1$ and that~$n = 2m$ is even. Then~$T$ is a
Coxeter torus of a Levi subgroup $L \cong \GL_m( q^2 )$ of~$G$. Now 
$V = V_1 \oplus V_2$ with $L$-invariant totally isotropic 
subspaces~$V_1$, $V_2$ of~$V$, such that~$L$ acts on~$V_1$ in 
the natural way, i.e.\ as $\GL_m( q^2 )$ acts on~$\mathbb{F}^m_{q^2}$, whereas 
the action of~$L$ on~$V_2$ is the natural action twisted by an automorphism 
of~$L$. Thus, by the first part of the proof, if~$\Gamma$ is reducible, we have 
$\Gamma = \Delta \Delta'$ with monic irreducible polynomials 
$\Delta \neq \Delta'$. As~$t$ lies in the unitary group, we must have 
$\Delta' = \Delta^{\dagger}$.

(c) Put $d = \deg( \Delta )$. If $|\zeta| \leq p^a$, then 
$\zeta \in \mathbb{F}_{q^{\delta}}$, and thus $d = 1$. 
Suppose then that $|\zeta|= p^{a + a'}$ for some non-negative integer~$a'$. 
Observe that the highest power of~$p$ dividing $q^{\delta} - 1$, respectively 
$q^{\delta p^{a'}} - 1$, equals~$p^a$, respectively $p^{a+a'}$. 
Hence
$d = [\mathbb{F}_{q^{\delta}}[\zeta]\colon\!\mathbb{F}_{q^{\delta}}] = p^{a'}$. 

(d) Suppose that~$\Gamma$ is reducible, and let $\zeta \in \mathbb{F}$ denote
a root of~$\Delta$, where~$\Delta$ is as in~(b). Then~$\zeta$ is of $p$-power 
order, and thus $\deg(\Delta)$ is odd by~(c). But then 
$\Delta = \Delta^{\dagger}$ by Lemma~\ref{UnitaryDelta}, a contradiction.

(e) As $\Gamma$ is irreducible,
the roots of~$\Gamma$ are
$$\zeta, \zeta^{q^{\delta}}, \zeta^{q^{\delta \cdot 2}}, \ldots ,
\zeta^{q^{\delta \cdot(d - 1)}},$$
and the product of these roots equals
$$\zeta^{(q^{\delta d} - 1)/(q^{\delta} - 1)}.$$
This proves our claim.
\end{proof}

We will also need the following elementary result.

\begin{lem}
\label{CongruenceOfFloors}
Let $m, b$ be positive integers. Then the following hold.

{\rm (a)} If $m$ is even or if $p \equiv 1\,\,(\mbox{\rm mod}\,\,4)$,
then $\lfloor m/2 \rfloor + \lfloor mp^b/2 \rfloor$ is even.

{\rm (b)} If $m$ is odd and $p \equiv -1\,\,(\mbox{\rm mod}\,\,4)$,
then $\lfloor m/2 \rfloor + \lfloor mp^b/2 \rfloor 
\equiv b\,\,(\mbox{\rm mod}\,\,2)$.
\end{lem}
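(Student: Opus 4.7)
The plan is to prove the lemma by a direct case distinction based on the parity of $m$, reducing everything to a parity computation involving $1 + p^b \pmod 4$.

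First, I would dispose of the case in which $m$ is even. Writing $m = 2k$, both $m/2 = k$ and $mp^b/2 = kp^b$ are integers, so the sum equals $k(1 + p^b)$. Since $p$ is odd, $p^b$ is odd, hence $1 + p^b$ is even and the sum is even. This settles the half of~(a) where $m$ is even, independently of~$p$.

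Next I would treat the case in which $m$ is odd, which covers the remaining half of~(a) and all of~(b). Writing $m = 2k+1$, I get $\lfloor m/2 \rfloor = (m-1)/2$, and since $mp^b$ is odd as a product of two odd integers, $\lfloor mp^b/2 \rfloor = (mp^b - 1)/2$. Adding these gives
\[
\lfloor m/2 \rfloor + \lfloor mp^b/2 \rfloor = \frac{m(1 + p^b)}{2} - 1.
\]
Since $m$ is odd and $(1 + p^b)/2$ is an integer, the parity of this expression is determined by the parity of $(1 + p^b)/2$, i.e.\ by $1 + p^b \pmod 4$.

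Finally I would read off the conclusion from the congruence class of $p$ modulo~$4$. If $p \equiv 1\,(\mathrm{mod}\,4)$, then $p^b \equiv 1\,(\mathrm{mod}\,4)$, so $(1 + p^b)/2$ is odd and the displayed expression above is even, completing~(a). If $p \equiv -1\,(\mathrm{mod}\,4)$ and $m$ is odd, then $p^b \equiv (-1)^b\,(\mathrm{mod}\,4)$, so $(1+p^b)/2 \equiv (1 + (-1)^b)/2 \equiv b+1\,(\mathrm{mod}\,2)$, and hence the displayed sum is congruent to $b \pmod 2$, which is~(b). There is no substantive obstacle here; the only thing to watch is keeping the bookkeeping straight in the final step, since the ``$-1$'' in the formula flips parity relative to $(1+p^b)/2$.
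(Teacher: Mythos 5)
Your proof is correct and follows essentially the same route as the paper: split on the parity of $m$, reduce to the identity $\lfloor m/2\rfloor + \lfloor mp^b/2\rfloor = m(1+p^b)/2$ (minus $1$ when $m$ is odd), and read off the parity from $p^b \bmod 4$. The only difference is that you spell out the final congruence $(1+p^b)/2 \equiv b+1 \pmod 2$ explicitly, where the paper leaves that last bookkeeping step to the reader.
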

\begin{proof}
Suppose first that~$m$ is even. Then
$\lfloor m/2 \rfloor + \lfloor mp^b/2 \rfloor = m(1+p^b)/2$ is
even as~$p$ is odd. This gives the first part of~(a).

Suppose now that~$m$ is odd. Then
$\lfloor m/2 \rfloor + \lfloor mp^b/2 \rfloor = m(1+p^b)/2 - 1$. This
is even if and only if $(1+p^b)/2$ is odd, i.e.\ if and only if
$p^b\equiv 1\,\,(\mbox{\rm mod}\,\,4)$. This proves the remaining results.
\end{proof}

Assume from now on that that $n \geq 2$.
Define the integers~$a'$ and~$m$ by $n = p^{a'}m$ and $p \nmid m$.
Then~$a'$ is non-negative and~$m$ is positive.
Let~${\bB}$ denote a $p$-block of~${G}$ with a non-trivial cyclic 
defect group~${D}$. Let ${\bT}$ denote an $F$-stable maximal torus 
of~${\bG}$ such that~${T}$ is cyclic of order $q^n - \varepsilon^n$. 
(Notice that~${\bT}$ is a Coxeter torus of~${\bG}$, unless 
${G} = \GU_n(q)$ and~$n$ is even; in any case,~${\bT}$ is uniquely 
determined in~${\bG}$ up to conjugation in~${G}$.) 
By Corollary~\ref{GLnCor} we may assume that~${D}$ is a Sylow 
$p$-subgroup of~${T}$, which implies $|{D}| = p^{a+a'}$. Also, there is 
$\theta \in \Irr( {T} )$ in general position and of $p'$-order, such that 
the unique non-exceptional character of~$\Irr({\bB})$ is the character 
${\chi} :=
\varepsilon_{{\bG}}\varepsilon_{{\bT}}R_{{\bT}}^{{\bG}}( \theta )$;
see Corollary~\ref{StrictlyRegularBlocksCor}.

\begin{lem}
\label{SignSequenceLem}
Assume the notation and assumptions introduced in the previous paragraph.
Let $t \in {T}$ be a non-trivial $p$-element and let~$l$ be a positive 
integer such that $t^{p^{l-1}} \in Z( {G} )$. Put 
$\Lambda := \{ 0, 1, \ldots, l - 1 \}$. 

Then $\sigma_{{\chi}}^{[l]}( t ) = \omega_{\Lambda}( \mathbf{1}_I )$, for 
an interval $I \subseteq \Lambda \setminus\{0\}$. 

Moreover, $I = \emptyset$, except if $\varepsilon = -1$, $n$ is odd, 
$p \equiv -1\,\,(\mbox{\rm mod}\,\,4)$ and $|t| = p^{a+a''}$ for some 
$1 \leq a'' \leq a'$. In this case, $I = [l - a'', l - 1]$.
\end{lem}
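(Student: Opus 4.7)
The plan is to apply Lemma~\ref{SignAndOmegaInvariant} to each power $t^{p^i}$ with $0\le i\le l-1$. Since $\theta$ has $p'$-order while $t^{p^i}$ is a $p$-element, $\theta(t^{p^i})=1$, and hence $\sigma_\chi(t^{p^i})=\omega_{\bG}(t^{p^i})$. Because $\bG=\GL_n(\mathbb{F})$ has connected center, centralizers of semisimple elements are automatically connected, so that $\omega_{\bG}(t^{p^i})=\varepsilon_{\bG}\,\varepsilon_{C_{\bG}(t^{p^i})}$. This reduces the problem to the identification of $C_{\bG}(t^{p^i})$ and the computation of its relative $F$-rank.

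First I would determine the centralizer. If $|t^{p^i}|\le p^a$ then $t^{p^i}\in Z(G)$ (since the $p$-part of $Z(G)$ already has order $p^a$), so $C_{\bG}(t^{p^i})=\bG$ and $\omega_{\bG}(t^{p^i})=+1$. Otherwise, write $|t^{p^i}|=p^{a+b}$ with $b\ge 1$; by Lemma~\ref{MinimalPolynomialAndDeterminant}(c),(d), the minimal polynomial $\Gamma$ of $t^{p^i}$ is irreducible of degree $d:=p^b$. The standard description of centralizers of semisimple elements in $\GL_n^\varepsilon(q)$ (in the unitary case one first invokes Lemma~\ref{UnitaryDelta} to see $\Gamma=\Gamma^\dagger$, since $d$ is odd) then yields $C_G(t^{p^i})\cong\GL_{n/d}^\varepsilon(q^d)$. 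By Example~\ref{OmegaOfGLNQ}, the relative $F$-rank of this centralizer equals $n/d$ for $\varepsilon=1$ and $\lfloor(n/d)/2\rfloor$ for $\varepsilon=-1$.

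The parity bookkeeping is then routine. For $\varepsilon=1$, $n=mp^{a'}$ and $n/d=mp^{a'-b}$ have the same parity as $m$, so $\omega_{\bG}(t^{p^i})=(-1)^{n+n/d}=+1$. For $\varepsilon=-1$, writing $n/d=mp^{a'-b}$ and $n=(n/d)\cdot p^b$, Lemma~\ref{CongruenceOfFloors} shows that $\lfloor n/2\rfloor+\lfloor (n/d)/2\rfloor$ is even, unless both $n$ is odd (equivalently $m$ is odd) and $p\equiv-1\pmod 4$, in which exceptional case the sum is congruent to $b$ modulo~$2$. Thus $\omega_{\bG}(t^{p^i})=+1$ in all non-exceptional situations, while $\omega_{\bG}(t^{p^i})=(-1)^b$ in the exceptional one.

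To assemble $\sigma_\chi^{[l]}(t)$, let $k$ be defined by $|t|=p^k$ and set $a'':=k-a$; the hypotheses $t\ne 1$, $|t|\le|D|=p^{a+a'}$ and $t^{p^{l-1}}\in Z(G)$ combine to give $1\le a''\le\min(a',l-1)$ when $t\notin Z(G)$, and $a''\le 0$ otherwise. Position $j\in\{1,\ldots,l\}$ of $\sigma_\chi^{[l]}(t)$ corresponds to the exponent $i=l-j$, so the entry is $+1$ for $j\le l-a''$ (since then $i\ge k-a$), while in the exceptional case the remaining entries take the value $(-1)^{j-(l-a'')}$. A direct comparison with Lemma~\ref{OmegaOfInterval} identifies the resulting tuple with $\omega_\Lambda(\mathbf{1}_I)$ for $I=[l-a'',l-1]$ in the exceptional case, and with $\omega_\Lambda(\mathbf{1}_\emptyset)$ otherwise. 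I expect the main technical subtlety to be the precise identification of $C_G(t^{p^i})$ as $\GU_{n/d}(q^d)$ in the unitary case, which requires a careful analysis of how the twisted Frobenius permutes the factors of $\GL_{n/d}(\mathbb{F})^d$ and restricts to each factor.
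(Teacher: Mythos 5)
Your proposal is correct and follows essentially the same route as the paper's proof: apply Lemma~\ref{SignAndOmegaInvariant} to the powers $t^{p^i}$, compute the centralizer $C_G(t^{p^i}) \cong \GL^{\varepsilon}_{n/d}(q^d)$ with $d = p^b$ via Lemma~\ref{MinimalPolynomialAndDeterminant}, read off the relative $F$-ranks from Example~\ref{OmegaOfGLNQ}, and invoke Lemma~\ref{CongruenceOfFloors} for the parity computation before matching against Lemma~\ref{OmegaOfInterval}. The only cosmetic difference is that you invoke Lemma~\ref{UnitaryDelta} directly to see $\Gamma = \Gamma^{\dagger}$, whereas the paper's Lemma~\ref{MinimalPolynomialAndDeterminant}(d) already packages that step; this is harmless.
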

\begin{proof}
By Lemma~\ref{SignAndOmegaInvariant} we have $\sigma_{{\chi}}^{[l]}( t ) = 
\omega_{{\bG}}^{[l]}( t )$. Choose $j \in \Lambda \setminus \{0\}$ and
let $u := t^{p^{l - j}}$. We have to compute $\omega_{{\bG}}( u )$.

Clearly, $C_{{G}}( u ) = {G}$ if $|u| \mid p^a$, in which case 
$\omega_{{\bG}}( u ) = 1$. Suppose that $|u| = p^{a+b}$ for some integer 
$b > 0$.  As $|{D}| = p^{a+a'}$, we have $b \leq a'$. By 
Lemma~\ref{MinimalPolynomialAndDeterminant}, the minimal polynomial of~$u$ is
irreducible, and the eigenvalues of~$u$ span a field extension
of~$\mathbb{F}_{q^{\delta}}$ of degree~$p^b$.
Thus $C_{{G}}( u ) \cong \GL^{\varepsilon}_{mp^{a'-b}}( q^{p^b} )$. 
By Example~\ref{OmegaOfGLNQ}, we get 
$$\omega_{{\bG}}( u ) = (-1)^{mp^{a'} + mp^{a'-b}}$$
if $\varepsilon = 1$,
and 
$$\omega_{{\bG}}( u ) = (-1)^{\lfloor mp^{a'}/2 \rfloor + \lfloor mp^{a'-b}/2 \rfloor}$$
if $\varepsilon = -1$.

Thus $\omega_{{\bG}}( u ) = 1$ if $\varepsilon = 1$. Suppose that
$\varepsilon = - 1$. Lemma~\ref{CongruenceOfFloors} then implies that 
$\omega_{{\bG}}( u ) = 1$ unless $m$ and $b$ are odd 
and $p \equiv -1\,\,(\mbox{\rm mod}\,\,4)$, in which case 
$\omega_{{\bG}}( u ) = -1$.

Suppose now that $\varepsilon = -1$, that $m$ is odd, that
$p \equiv -1\,\,(\mbox{\rm mod}\,\,4)$ and that $|t| = p^{a + a''}$ for some
positive integer~$a''$. As $t^{p^{l-1}} \in Z({G})$ by hypothesis, we have 
$a'' < l$. By what we have proved so far, we get 
$$\sigma_{{\chi}}^{[l]}( t ) = (1, \ldots, 1, -1, 1, -1, \ldots, \pm 1),$$
where the first $-1$ appears at position $l - a'' + 1$.
The claim now follows from Lemma~\ref{OmegaOfInterval}.
\end{proof}

We are now able to determine $W( {\bB} )$. By Lemma~\ref{lem:OpZG}(c), this is 
of the form $W_D( A )$ for some $A \subseteq \{ a, \ldots, a + a' - 1 \}$.

\begin{cor}
\label{GLNQdeq1Source}
Assume the notation and assumptions introduced in the paragraph preceding
{\rm Lemma~\ref{SignSequenceLem}}.

Then $W({\bB}) \cong k$, unless $\varepsilon = -1$,~$n$ is odd and 
$p \equiv -1\,\,(\mbox{\rm mod}\,\,4)$. In this case,
$W({\bB}) \cong W_{{D}}([a,a+a' - 1])$.
\end{cor}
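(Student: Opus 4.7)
The plan is to combine Lemma~\ref{SignSequenceLem}, which computes the sign sequence of the non-exceptional character on powers of a generator of $D$, with Remark~\ref{SignSequenceRemark} and Lemma~\ref{NewNotation}, which encode how such a sign sequence labels $W(\mathbf{B})$ in the Dade group.

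First I would fix a generator $t$ of $D$ and put $l := a + a'$, so that $|t| = |D| = p^l$. Since $p \mid q - \varepsilon$, the center $Z(G)$ has $p$-part of order $p^a \geq p$, and $Z(G)$ is contained in every maximal torus; in particular $Z(G)_p \leq T_p = D$. As $D$ is cyclic, it follows that $D_1 \leq Z(G)_p \leq Z(G)$, so the hypothesis of Remark~\ref{SignSequenceRemark} is fulfilled. Writing $W := W(\mathbf{B})$, the remark gives
\[
\sigma_{\chi}^{[l]}(t) = \omega_{W}^{[l]}(t),
\]
where $\chi$ is the non-exceptional character of $\mathbf{B}$ described after Corollary~\ref{StrictlyRegularBlocksCor}.

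Next I would apply Lemma~\ref{SignSequenceLem} to the element $t$ and with the parameter~$l$ chosen above: this produces an interval $I \subseteq \Lambda \setminus\{0\}$ such that $\sigma_{\chi}^{[l]}(t) = \omega_{\Lambda}(\mathbf{1}_I)$. In the generic situation (either $\varepsilon = 1$, or $n$ even, or $p \equiv 1 \pmod 4$) the lemma forces $I = \emptyset$. Otherwise, we are in the case $\varepsilon = -1$, $n$ odd, $p \equiv -1 \pmod 4$, and because $|t| = p^{a+a'}$ is the full order, the parameter $a''$ of the lemma is $a'' = a'$, so the interval is $I = [l - a',\, l - 1] = [a,\, a + a' - 1]$.

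Finally, writing $W = W_D(A)$ for the unique subset $A \subseteq \Lambda$ coming from Lemma~\ref{lem:OpZG}(c), Lemma~\ref{NewNotation} gives $\omega_{W}^{[l]}(t) = \omega_{\Lambda}(\mathbf{1}_A)$. Combining with the previous two displays yields $\omega_{\Lambda}(\mathbf{1}_A) = \omega_{\Lambda}(\mathbf{1}_I)$, and since $\omega_{\Lambda}$ is an isomorphism of $\mathbb{F}_2$-vector spaces, $A = I$. Thus $W(\mathbf{B}) \cong W_D(\emptyset) \cong k$ in the generic case, and $W(\mathbf{B}) \cong W_D([a, a+a'-1])$ in the exceptional case, as required. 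No step here is delicate: Lemma~\ref{SignSequenceLem} already contains the essential computation, and the corollary is a bookkeeping translation from sign sequences to labels in the Dade group, so the only thing to double-check is that the verification $D_1 \leq Z(G)$ legitimizes the use of Remark~\ref{SignSequenceRemark}.
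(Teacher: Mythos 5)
Your proof is correct and follows essentially the same route as the paper: compute the sign sequence $\sigma_\chi^{[l]}(t)$ via Lemma~\ref{SignSequenceLem}, identify it with $\omega_W^{[l]}(t)$ (your Remark~\ref{SignSequenceRemark} is the paper's citation of \cite[Lemma~3.3]{HL24}), and then read off the label $A$ via Lemma~\ref{NewNotation} and injectivity of $\omega_\Lambda$. The only cosmetic difference is that the paper does not pause to verify $D_1 \leq Z(G)$; your check (that $O_p(Z(G))$ has order $p^a$, lies in $D$, and hence equals $D_a \supseteq D_1$ since $D$ is cyclic) is a small but worthwhile sanity check that the hypotheses are indeed met, and the paper's proof implicitly relies on it.
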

\begin{proof}
Let~$t$ denote a generator of~${D}$. Then $|t| = p^{a + a'}$. 
Put $l := a + a'$ and let $\Lambda := \{ 0, \ldots, l - 1 \}$.
Lemma~\ref{SignSequenceLem} yields
$\sigma_{{\chi}}^{[l]}( t ) = \omega_{\Lambda}( \mathbf{1}_I )$ with 
$I = \emptyset$ and $I = [a, a+a' - 1]$ in the respective cases.

Our assertion follows from Lemma~\ref{NewNotation} and \cite[Lemma~$3.3$]{HL24}.
\end{proof}

\subsection{The general case}
Here, we will show how to reduce the computation of~$W({\bB})$ for an 
arbitrary cyclic block~${\bB}$ of~${G}$ to 
Corollary~\ref{GLNQdeq1Source}. We will also show, 
by way of an example, how to use Corollary~\ref{GLNQdeq1Source} to construct a 
non-uniserial cyclic block~${\bB}$ of a suitable group $\GU_n(q)$ with 
$W(\bB) \not\cong k$. We will also discuss an important consequence of 
Corollary~\ref{GLNQdeq1Source} to the special linear and unitary groups.

Keep the notation introduced at the beginning of 
Section~\ref{TheGeneralLinearAndUnitaryGroups}. In addition, fix an odd 
prime~$p$ with $p \nmid q$. To allow for a uniform treatment, we denote by~$d$ 
the order of $\varepsilon q$ modulo~$p$. 
Then $p \mid q^d + 1$ if $\varepsilon = -1$ and~$d$ is odd, and we say that~$p$ 
is a \textit{unitary prime} for $\GU_n( q )$. If $\varepsilon = -1$ and $d$ is 
even, then $p \mid q^d - 1$, and~$p$ is called a \textit{linear prime} for 
$\GU_n( q )$.

\begin{thm}
\label{GLNQdneq1}
Let~$d$ denote the order of~$\varepsilon q$ modulo~$p$. Assume that $d > 1$.
Let~${\bB}$ denote a $p$-block of~${G}$ with a non-trivial cyclic 
defect group~${D}$. Assume that the fixed space of~${D}$ on the 
natural vector space for~${G}$ is trivial.
Put ${L} := C_{{G}}( {D}_1 )$, where~${D}_1$ denotes the 
unique subgroup of~${D}$ of order~$p$.

Then there are non-negative integers $m, a'$ with $p \nmid m$ and 
$n = mdp^{a'}$, such that 
${L} \cong \GL_{mp^{a'}}^{\varepsilon(d)}( q^{d} )$, where
$\varepsilon(d) = 1$ if $\varepsilon = 1$ or if $\varepsilon = -1$ and~$d$ is 
even, and $\varepsilon(d) = -1$, otherwise.

Let ${\bc}$ denote a Brauer correspondent of~${\bB}$ in~${L}$.  
Then $W( {\bB} ) \cong W( {\bc} )$ and $W( {\bc} )$ can be
computed from 
{\rm Corollary~\ref{GLNQdeq1Source}}.
\end{thm}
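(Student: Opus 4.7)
The plan is to combine two main ingredients: an explicit identification of $L = C_G(D_1)$ as a general linear or unitary group over an extension field, and the general principle $W(\bB) \cong W(\bc)$ for any Brauer correspondent $\bc$ of $\bB$ in $L$, which is available via \cite[Corollary~4.4]{HL24}.

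First, I would fix a generator $t_1$ of $D_1$ and analyze its action on the natural module $V = \mathbb{F}_{q^\delta}^n$, where $\delta = 1$ if $\varepsilon = 1$ and $\delta = 2$ if $\varepsilon = -1$. By Lemma~\ref{ShapeOfMinimalPolynomial}, the fixed spaces of $t$ and $t_1$ on $V$ agree, and both are trivial by hypothesis. The classification of the $F$-stable maximal tori of~$G$ via partitions of~$n$, together with the cyclicity of~$D$ and the absence of fixed vectors, forces the maximal torus~$T$ containing~$D$ to be a Coxeter-like torus of order $q^n - \varepsilon^n$ corresponding to the partition~$(n)$: any other partition would give a product torus with pairwise coprime factor orders, yet the non-fixing condition would require each factor to carry non-trivial $p$-part, contradicting pairwise coprimality as soon as there are two or more factors. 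Hence~$t$ acts irreducibly on~$V$, and by Lemma~\ref{MinimalPolynomialAndDeterminant}(d) the minimal polynomial~$\Gamma$ of~$t_1$ over~$\mathbb{F}_{q^\delta}$ is also irreducible. A short computation using the definition of~$d$ shows $\deg(\Gamma) = d$ if $\varepsilon = 1$ or if $\varepsilon = -1$ with $d$ odd, and $\deg(\Gamma) = d/2$ if $\varepsilon = -1$ with $d$ even. Comparing the degree of~$\Gamma$ to~$n$ via its multiplicity in the characteristic polynomial yields the factorization $n = mdp^{a'}$ with $p \nmid m$.

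Next, I would invoke the Fong--Srinivasan description of centralizers of semisimple elements in classical groups (see \cite[\S 1]{fs2} and \cite[(1.13)]{fs4}) to identify~$L$. For $\varepsilon = 1$, $\Gamma$ is irreducible over~$\mathbb{F}_q$ and the centralizer is of linear type: $L \cong \GL_{mp^{a'}}(q^d)$. For $\varepsilon = -1$ with $d$ odd, a direct calculation in the spirit of Lemma~\ref{UnitaryDelta} gives $\Gamma = \Gamma^\dagger$, so the centralizer is of unitary type: $L \cong \GU_{mp^{a'}}(q^d)$. For $\varepsilon = -1$ with $d$ even, one has $\Gamma \neq \Gamma^\dagger$ with $\Gamma$ and $\Gamma^\dagger$ of degree~$d/2$; the $\dagger$-pairing combines the corresponding isotypic components into a single linear-type block whose centralizer is $L \cong \GL_{mp^{a'}}(q^d)$. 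In all three cases this matches $\GL_{mp^{a'}}^{\varepsilon(d)}(q^d)$ as claimed.

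Finally, by \cite[Corollary~4.4]{HL24} the Brauer correspondent~$\bc$ of~$\bB$ in~$L$ has defect group~$D$ and satisfies $W(\bB) \cong W(\bc)$. Since $p \mid q^d - \varepsilon(d)$ by the definitions of~$d$ and~$\varepsilon(d)$ in each sub-case, Corollary~\ref{GLNQdeq1Source} applies to the block~$\bc$ of~$L$ and computes $W(\bc)$ explicitly. The hardest step will be the identification of~$L$ in the unitary case with $d$ even, where one must carefully track the $\dagger$-pairing to confirm that the two degree-$d/2$ factors combine to give $\GL_{mp^{a'}}(q^d)$ over an extension of degree~$d$ rather than~$d/2$.
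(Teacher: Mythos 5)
Your overall plan is reasonable and arrives at the correct conclusion, but there is a genuine gap in the step where you establish the irreducibility of the minimal polynomial~$\Gamma$ of~$t_1$. You argue via maximal tori: ``any other partition would give a product torus with pairwise coprime factor orders, yet the non-fixing condition would require each factor to carry non-trivial $p$-part, contradicting pairwise coprimality.'' This is incorrect. The factor orders of a product torus are not pairwise coprime in general: the partition $(2,2)$ of $n=4$ gives a torus of order $(q^2-\varepsilon^2)^2$ with two equal factors, and more generally distinct parts $\lambda_i, \lambda_j$ give $q^{\lambda_i}-\varepsilon^{\lambda_i}$ and $q^{\lambda_j}-\varepsilon^{\lambda_j}$, which always share the factor $q-\varepsilon$ up to small primes. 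Moreover, a cyclic $p$-subgroup of $G$ with trivial fixed space need not lie only in a torus of order $q^n - \varepsilon^n$; it can lie in other tori as well. Consequently, Lemma~\ref{MinimalPolynomialAndDeterminant}(d), which explicitly requires $t$ to lie in the cyclic maximal torus of order $q^n-\varepsilon^n$, is not available by this route. The fix is cheap: Lemma~\ref{ShapeOfMinimalPolynomial}, which you already invoke for the coincidence of the fixed spaces of $t$ and $t_1$, in fact directly gives that $\Gamma$ is irreducible when $\kappa$ is the zero form, and that $\Gamma$ is irreducible with $\Gamma=\Gamma^\dagger$ or equals $\Delta\Delta^\dagger$ when $\kappa$ is hermitian — its crucial input is that $D$ is a \emph{radical} $p$-subgroup of $G$, which holds since $D$ is a defect group. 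Use that statement instead of the torus argument.

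Once the shape of $\Gamma$ is in place, your route — determining $L = C_G(t_1)$ directly from $\Gamma$ via Fong--Srinivasan and then invoking $W(\bB) \cong W(\bc)$ and Corollary~\ref{GLNQdeq1Source} — is correct and is a slightly different path from the paper's. The paper instead cites \cite[Theorem~(3C)]{fs2} to show that the primary decomposition of the generator $t$ of the full defect group $D$ has a single term, then applies \cite[Proposition~(4A)]{fs2} to identify $C_G(D) \cong \GL_m^{\varepsilon(d)}(q^{dp^{a'}})$, and only afterwards deduces the structure of $L = C_G(D_1)$ from that of $C_G(D)$ using that $t_1$ is a power of $t$. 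Your version computes $L$ in one shot, which is a bit more direct; the paper's version has the advantage of immediately displaying $C_G(D)$ as well, and of leaning on the precise block-theoretic statement from Fong--Srinivasan rather than on the minimal-polynomial bookkeeping. One further small omission: in the case $\varepsilon = -1$ with $d$ even you assert $\Gamma = \Delta\Delta^\dagger$ without justification; this requires observing that $d>1$ rules out $\zeta^{q+1}=1$ for an eigenvalue $\zeta$ of order $p$, so $\Delta \neq \Delta^\dagger$ (complementing Lemma~\ref{UnitaryDelta}, which handles odd degree).
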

\begin{proof}
Let $t \in {D}$ denote a generator of~${D}$ and let~$t_1$ be a power
of~$t$ generating~${D}_1$. By assumption,~$t$ has no non-trivial fixed 
vector on the natural vector space for~${G}$. 
Then \cite[Theorem (3C)]{fs2} implies that the primary decomposition of~$t$
in the sense of \cite[\S 1]{fs2} has a unique term. 

By \cite[Proposition (4A))]{fs2}, we obtain $C_{{G}}( {D} ) \cong
\GL_m^{\varepsilon(d)}( q^{dp^{a'}} )$. The structure of ${L}$ follows from 
this, as~$t_1$ is a power of~$t$. The assertion about~$W( {\bc} )$ is clear, 
since $p \mid q^d - \varepsilon(d)$.
\end{proof}

As a further application of Corollary~\ref{GLNQdeq1Source}, we provide an
explicit example demonstrating that the Bonnaf{\'e}--Dat--Rouquier Morita 
equivalence~\cite{BoDaRo} is not a source algebra equivalence in general.
It also gives an example for a non-uniserial block~${\bB}$ with 
$W( {\bB} ) \not\cong k$.
\begin{exmp}
\label{BoDaRoExample}
{\rm
Assume that $\varepsilon = -1$ and that $p \equiv -1\,\,(\mbox{\rm mod}\,\,4)$.
Let~$d$ be an odd, positive integer with $d > 1$. Suppose that the order of~$-q$ 
modulo~$p$ equals~$d$ (given~$p$ and~$d$, this is a condition on~$q$). Assume 
also that $n = pd$.

Let~${\bT}$ denote a Coxeter torus of~${\bG}$ and let~${D}$ be a Sylow 
$p$-subgroup of~${T}$. Then $C_{{\bG}}( {D} ) = {\bT}$, as a generator of~$D$ 
has $dp = n$ distinct eigenvalues. Write~${D}_1$ for the subgroup of~${D}$ of 
order~$p$ and put ${\bL} := C_{{\bG}}( {D}_1 )$. Then ${L} \cong \GU_p( q^d )$.

Now suppose that $s \in {T}$ is a $p'$-element with $C_{{G}}( s ) 
\cong \GU_d( q^p )$ (the eigenvalues of~$s$ span an extension field of 
$\mathbb{F}_{q^2}$ of degree~$p$). Identify~$\bT$ with its dual torus 
$\bT^* \leq \bG^* = \bG$. Let $\theta \in \Irr( T )$ such that the pairs
$(T,s)$ and $(T, \theta)$ are in duality. Then 
$[N_{{G}}( {\bT}, \theta )\colon\!{T}] = d$, and thus is prime to~$p$.
(Recall that $N_{{G}}( {\bT}, \theta )$ denotes the stabilizer of~$\theta$
in~$N_G(\bT)$.) As $C_{\bG}( D ) = \bT$, we have $N_G( \bT )= N_G( T )$.
Let ${\bB}$ denote the $p$-block of~${G}$ with defect group~${D}$ 
corresponding to the $p$-block of~${T}$ determined by~$\theta$. Now 
$s \in {T}$ is regular in~${\bL}$, and thus there is a block~${\bc}$
of~${L}$ such that $\Irr({\bc})$ contains the irreducible 
Deligne--Lusztig character
$\varepsilon_{{\bT}}\varepsilon_{{\bL}} R_{{\bT}}^{{\bL}}( \theta )$;
see Lemma~\ref{GeneralizedRegularBlocks}.
Then~${\bc}$ is a Brauer correspondent of~${\bB}$ and thus
$W( {\bB} ) \cong W( {\bc} )$.
By Corollary~\ref{GLNQdeq1Source}, where $({G}, n, q, {\bB})$ now
take the values $({L}, p, q^d, {\bc})$, we have
$W( {\bc} ) \not\cong k$.

By the results~\cite{fongcl} of Fong and Srinivasan, the Brauer tree
of~${\bB}$ has exactly~$d$ edges and is a straight line. In
particular,~${\bB}$ is not uniserial.

The Bonnaf{\'e}--Dat--Rouquier reduction \cite{BoDaRo} establishes a Morita
equivalence between~${\bB}$ and a unipotent block~${\bb}$ of 
$C_{{G}}( s ) \cong \GU_d( q^p )$. Once more by~\cite{fongcl}, the 
block~${\bb}$ is the principal block of~$C_{{G}}( s )$, as~$d$ is 
also equal to the order of~$q^p$ modulo~$p$. Hence $W( {\bb} ) \cong k$. 
Thus the Bonnaf{\'e}--Dat--Rouquier reduction does not preserve the source 
algebra equivalence class of blocks in general.

A specific instance for the parameters $(d,p,q,|s|))$ as above is given by
$(3,7,5,449)$. In this particular case, $W({\bc}) = W_D( [1,1] )
= \Omega_{D/D_1}( k )$, where~$D$ has order~$7^2$.
}\hfill $\Box$
\end{exmp}

Let us discuss an important consequence of the above investigations to the 
special linear and unitary groups. 

\begin{rem}
\label{SLNQdneq1}
{\rm
Let $G' = \SL_n^{\varepsilon}( q ) \leq G = \GL_n^{\varepsilon}(q)$. Assume that 
$p \nmid q - \varepsilon$. 

(a) Let~$\bB'$ denote a $p$-block of~$G'$ with a 
non-trivial cyclic defect group, and let~$\bB$ be a $p$-block of~$G$ 
covering~$\bB'$. As $p \nmid q - \varepsilon$, any defect group of~$\bB'$ is 
also a defect group of~$\bB$ and we have $W( \bB' ) \cong W( {\bB} )$ by 
\cite[Lemma~$4.3$]{HL24}. 
By Theorem~\ref{GLNQdneq1}, the invariant~$W( {\bB} )$ can be computed with
Corollary~\ref{GLNQdeq1Source}. 

(b) Let~$\bar{\bB}'$ be a cyclic $p$-block of a central quotient~$\bar{G}'$ 
of~$G'$. By \cite[Lemma~$4.1$]{HL24}, there is a block~$\bB'$ of~$G'$ such that
$W( \bB' ) \cong W( \bar{\bB}' )$, and~$W( \bB' )$ can be computed by~(a).
}\hfill $\Box$
\end{rem}

Thus the computation of~$W( \bar{\bB}' )$ for a cylic 
$p$-block~$\bar{\bB}'$ of a central quotient of~$\SL_n^{\varepsilon}( q )$
is reduced to the case $p \mid q - \varepsilon$. This will be settled in 
Part~III.

\section*{Acknowledgements}

The authors thank Olivier Dudas, Meinolf Geck, Radha Kessar, 
Burkhard K{\"u}ls\-ham\-mer,
Markus Linckelmann, Frank L{\"u}beck,
Klaus Lux, Gunter Malle and Jay Taylor for innumerable invaluable discussions
and elaborate explanations on various aspects of this work.
We are in particular indebted to Gunter Malle for carefully reading a first 
version of this manuscript, thereby detecting two lapses.

\end{document}